\newtheorem{lemma}{Lemma}[section]
\newtheorem{corollary}{Corollary}[section]
\newtheorem{proposition}[lemma]{Proposition}
\newtheorem{theorem}[lemma]{Theorem}
\theoremstyle{definition}
\newtheorem{remark}[lemma]{Remark}
\theoremstyle{definition}
\newtheorem*{ack}{Acknowledgments}
\numberwithin{equation}{section}
\begin{document}
\title[The isoperimetric problem in Riemannian manifolds]{The isoperimetric problem in the Riemannian manifold admitting a non-trivial conformal vector field}
\author[J. Li]{Jiayu Li}
\author[S. Pan]{Shujing Pan}

\address{School of Mathematical Sciences, University of Science and Technology of China, Hefei 230026, P.R. China}

\email{\href{mailto:jiayuli@ustc.edu.cn}{jiayuli@ustc.edu.cn}}
\email{\href{mailto:psj@ustc.edu.cn}{psj@ustc.edu.cn}}

\date{\today}
\keywords{conformal vector fields, isoperimetric inequality, mean curvature type flow}
\subjclass[2020]{53C18, 53C21, 53E10}
\begin{abstract}
In this article, we will study the isoperimetric problem by introducing a mean curvature type flow in the Riemannian manifold endowed with a non-trivial conformal vector field. This flow preserves the volume of the bounded domain enclosed by a star-shaped hypersurface and decreases the area of hypersurface  under certain conditions. We will prove the long time existence and convergence of the flow. As a result, the isoperimetric inequality for such a domain is established. Especially, we solve the isoperimetric problem for the star-shaped hypersurfaces in the Riemannian manifold endowed with a closed, non-trivial conformal vector field, a wide class of warped product spaces studied by Guan, Li and Wang is included.
\end{abstract}

\maketitle
\bibliographystyle{amsplain}
\tableofcontents

\section{Introduction}
The isoperimetric problem is one of the most famous problems in geometry with long history. 
Many methods  were introduced to study the isoperimetric problem. For example, ABP technique, optimal transport, Knothe mapping and so on. Recently, Brendle \cite{brendle2021isoperimetric} obtained a sharp isoperimetric inequality for minimal submanifolds in Euclidean space of codimension at most 2 by proving the Sobolev inequality on submanifolds.
It has been proven that the hypersurface flow is a valid tool to study the isoperimetric problem in manifolds. Gage and Hamilton \cite{gage1986heat} proved the isoperimetric inequality for convex planar domains using the curve shortening flow.  Huisken \cite{huisken1987volume} introduced the volume preserving mean curvature flow and proved the isoperimetric inequality for closed, uniformly convex hypersurfaces in $\mathbb{R}^{n+1}$. Applying the volume preserving mean curvature flow in \cite{huisken1987volume}, Huisken and Yau \cite{huisken1996definition} showed that if $M$ is a $C^4$-asymptotic flat 3-manifold, then  the complement of a compact subset of $M$ admitting a foliation by a family of stable constant mean curvature spheres. Moreover, the leaves of this foliation are the unique stable CMC spheres within a large class of surfaces. After that, their uniqueness result was strengthened in the important work by Qing and Tian \cite{qing2007uniqueness}.  Eichmair and Metzger \cite{eichmair2013large} confirmed that the surfaces found in \cite{huisken1996definition,qing2007uniqueness} are in fact isoperimetric surfaces (conjectured by Bray \cite{bray1997penrose} first). Various extensions of these results have been proven in \cite{brendle2014large,chodosh2021isoperimetry,chodosh2022global,yu2023isoperimetry,eichmair2013unique,neves2009existence} and so on.
Besides the volume preserving mean curvature flow, there are other mean curvature type flows which can be used to study the isoperimetric problem. Schulze \cite{schulze2008nonlinear} proved the isoperimetric inequality in $\mathbb{R}^{n+1}$ with $n\leq7$ using a nonlinear mean curvature type flow.  Guan and Li \cite{guan-li15} constructed a new mean curvature type flow  to prove the isoperimetric inequality for star-shaped hypersurfaces in space forms. After that, Guan, Li and Wang \cite{Guan-Li-Wang2019} extended the mean curvature type flow to a class of warped product spaces and proved the  isoperimetric inequality for star-shaped hypersurfaces under certain conditions. Hypersurface flows can also be used to prove many other geometric inequaities, such as Alexandrov-Fenchel inequalities, Minkowski type inequalities and so on. See for instance
\cite{chen2022alexandrov,ge2014hyperbolic,hu2022locally,li2014geometric,wang2014isoperimetric,chen2022fully,guan2021isoperimetric,brendle-Guan-Li} among many others.

In this paper, we study the isoperimetric problem in a Riemannian manifold admitting a non-trivial conformal vector field. Let $(M^{n+1},\bar{g})$ be a Riemannian manifold, a vector field $\xi$ on $M$ is called a conformal vector field if it satisfies
\begin{align}
	L_{\xi}\bar{g}=2\varphi\bar{g},
\end{align}
where $L$ is the Lie derivative and $\varphi$ is given by $\varphi=\frac{div\xi}{n+1}$. Such vector fields can be described from conformal transformation and remain  to be conformal under any conformal change of metric on $M$. If $\varphi=0$, then the vector field $\xi$ is well known as a Killing vector field and the transformation generated by $\xi$ is an isometry. If $\varphi=c$ is a constant, then the vector field $\xi$ is called a homothetic conformal vector field and the transformation generated by $\xi$ is a homothety. In addition, another special class of conformal vector fields is the closed conformal vector field, which means that $\xi$ satisfies
\begin{align}
	\overline{\nabla}_X\xi=\varphi X,\ \forall X\in TM.
\end{align} 
Riemannian manifolds endowed with closed, non-trivial conformal vector fields are closely related to warped product spaces \cite{chen2017differential}  with a 1-dimensional factor.  It's well known that a Riemannian manifold $(N,\bar{g})$ with the warped product structure $\bar{g}=dr^2+\phi^2(r)\tilde{g}$ is naturally equipped with a closed conformal vector field $\xi=\phi(r)\partial_r$, where $r\in\mathbb{R}$, $\phi=\phi(r)$ is a smooth function defined on $r$ and $\tilde{g}$ is a metric of a complete Riemannian manifold. Actually, $\xi=\phi(r)\partial_r$ is the gradient of a function at this time and such a vector field is called a gradient conformal vector field. Reversely, it has been proven that any Riemannian manifold endowed with a closed, non-trivial conformal vector field is locally isometric to a warped product with a 1-dimensional factor \cite{Montiel99}. There are some rigidity results on the manifolds endowed with non-trivial conformal vector fields established,  see Yano \cite{yano1940concircular} , Obata \cite{obata1962certain}, Lichnerowicz \cite{lichnerowicz1964transformations}, Goldberg and Kobayashi \cite{goldberg1962conformal} for instance, and which can also be found in the book \cite{Yano1970}.
 
One of our motivations to consider the isoperimetric problem in such manifolds is the existence of the totally umbilical hypersurface. Let $\mathcal{Z}(\xi)$ be the set consists of zero points of the vector field $\xi$ , it is also called the singular set. We can conclude that $\mathcal{Z}(\xi)$ is a discrete  set under some condition (see \S \ref{preliminaries}).  Let $M'=M\setminus\mathcal{Z}(\xi)$ be the open dense set  and $\mathcal{F}(\xi)$ be the foliation in $M'$ determined by the $n$-dimensional distribution 	\[p\in M'\rightarrow\mathcal{D}(p)=\{v\in T_pM|\langle\xi(p),v\rangle=0\}.\] 
When $\xi$ is closed,  the distribution $\mathcal{D}$ is integrable on $M'$. Montiel\cite{Montiel99} proved that each connected leaf of $\mathcal{F}(\xi)$ is a totally umbilical hypersurface with constant mean curvature. Moreover, he established that these leaves are the only hypersurfaces with constant mean curvature in many cases.  He also proved that these leaves are stable under extra condition on Ricci curvature (\cite{montiel98}).  Actually, if we assume the distribution $\mathcal{D}$ is integrable, each connected leaf of $\mathcal{F}(\xi)$ is totally umbilical with mean curvature $H=n\frac{\varphi}{\vert\xi\vert}$ even if $\xi$ is not closed (see Proposition \ref{umbilical}).  When we assume further that these leaves are compact hyperurfaces with constant mean curvature, i.e $\frac{\varphi}{\vert\xi\vert}$ is constant along any leaf. It seems reasonable to consider them as potential candidates for  the isoperimetric problem's solutions in such a Riemannian manifold.  We will discuss this problem carefully in \S \ref{sol to isoper}.

 In the manifold endowed with a non-trivial conformal vector field, a hypersurface $\Sigma$ embedded in $M'$ is called a star-shaped hypersurface if there exists a unit normal vector field $\nu$ of $\Sigma$ such that the support function $u=\langle\xi,\nu\rangle>0$. We call  $\nu$  the unit outward normal vector field. We shall consider the following mean  curvature type flow for star-shaped hypersurfaces in $(M^{n+1}, \bar{g})$:
  \begin{align}\label{flow-0}
 	\frac{\partial F}{\partial t}=(n\varphi-uH)\nu,
 \end{align}
where $F$ is the embedded map, $H$ is the mean curvature, $\nu$ is the unit outward normal vector field and $u=\langle\xi,\nu\rangle$ is the support function of the hypersurface. 
In the case that $M$ is a space form, the flow was introduced by Guan and Li \cite{guan-li15}.   Guan, Li and Wang \cite{Guan-Li-Wang2019} studied the case that $(M,\bar{g})=(\mathbb{R}\times_{\phi}\mathbf{B}^n,dr^2+\phi^2(r)\tilde{g})$ is the warped product spaces, where $(\mathbf{B}^n,\tilde{g})$ is a closed Riemannian manifold.
Note that $\varphi=\phi'(r)$ in such warped product spaces, and especially in space forms we have $(B^n,\Tilde{g})$ is the standard sphere $S^n$ in $\mathbb{R}^{n+1}$ and
\begin{equation*}
	\phi(r)=\left\{
	\begin{aligned}
		&r,\  r\in[0,+\infty)\ \text{when} \ M=\mathbb{R}^{n+1} \\
		&\sin r,\ r\in[0,\pi) \ \text{when} \ M=\mathbb{S}^{n+1}\\
		& \sinh r, \ r\in[0,+\infty)\ \text{when} \ M=\mathbb{H}^{n+1}.
	\end{aligned}
	\right.
\end{equation*}
 The important feature of flow (\ref{flow-0}) is that along the flow, volume of the enclosed domain is constant and the area of hypersurface is monotonically decreasing under some conditions. This property yields that, if the flow converges, the solution hypersurfaces would converge to a solution of the isoperimetric problem. As Guan and Li pointed out in \cite{guan-li15}, the key of proving the monotone property is  Minkowski identities. We will derive the  Minkowski identities on hypersurfaces of the manifold endowed with a non-trivial conformal vector field (see Lemma \ref{lemma-minikowsi}). Then, if we assume that any totally umbilical leaf of the foliation $\mathcal{F}(\xi)$ has constant mean curvature, we can prove the monotone property (see Theorem \ref{monotonethm}).
 
One of the main results of this article is the following  convergence theorem:
\begin{theorem}\label{convergence-0}
	Assume that $(M^{n+1},\bar{g})$ is a Riemannian manifold endowed with a complete conformal vector field satisfying the following conditions 
	\begin{itemize}
		\item[(i)]$\varphi>0$;
		\item[(ii)] $\varphi^2-\xi(\varphi)>0$;
		\item[(iii)] any connect leaf of totally umbilical Riemannian foliation $\mathcal{F}(\xi)$ on $M'$ is a closed hypersurface with  constant mean curvature and it can be represent by the level set $\{\frac{\vert\xi\vert}{\varphi}=r\}$ for some constant $r>0$;
		\item[(iv)]the direction determined by $\xi$ is of least Ricci curvature on $M$, that is
		\begin{align}\label{riccicond-0}
			\vert\xi\vert^2\overline{Ric}(X,X)-\vert X\vert^2\overline{Ric}(\xi,\xi)\geq 0, \ \forall X\in TM.
		\end{align}
	\end{itemize}
	Let $\Sigma_0$ be a star-shaped, closed hypersurface embedded in $M'$. Then the evolution equation (\ref{flow-0}) with $\Sigma_0$ as a initial data has a smooth solution for $t\in[0,+\infty)$. Moreover, the solution hypersurfaces converge to a  totally umbilical hypersurface $\Sigma_{\infty}$ whose unit normal vector field $\nu_{\infty}$ attains least Ricci curvature on $M'$, that means
	\begin{align*}
		\overline{Ric}(\nu_{\infty},\nu_{\infty})=\overline{Ric}(\mathcal{N},\mathcal{N}),
	\end{align*}
 where $\mathcal{N}=\frac{\xi}{\vert\xi\vert}$ is the unit vector field. 
\end{theorem}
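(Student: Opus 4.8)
The plan is to run the parabolic machinery for mean curvature type flows in the spirit of Guan--Li \cite{guan-li15} and Guan--Li--Wang \cite{Guan-Li-Wang2019}, the new point being that the warped product picture is only local and is controlled through the conformal data $\varphi$, $|\xi|$, $\xi(\varphi)$ and $\overline{Ric}$. Short-time existence is standard: since $\Sigma_0\subset M'$ is star-shaped, hypothesis (iii) lets us write $\Sigma_t$ as a radial graph $\{|\xi|/\varphi=\rho(\cdot,t)\}$ over a fixed leaf of $\mathcal F(\xi)$, and in the corresponding (warped product) coordinates \eqref{flow-0} becomes a scalar quasilinear parabolic equation for $\rho$ which is uniformly parabolic as long as the support function $u=\langle\xi,\nu\rangle$ stays positive. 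I would first record the evolution equations along \eqref{flow-0} for $g_{ij}$, $\nu$, $u$, $H$ and $|A|^2$, obtained by the usual computation with the ambient terms expressed through $\varphi$, $\xi(\varphi)$, $\overline{\nabla}\varphi$ and $\overline{Ric}$; completeness of $\xi$ together with the estimates below will keep the flow inside $M'$.

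The first a priori estimate is the $C^0$ bound. At a spatial maximum of $\rho(\cdot,t)$ one has $\nabla\rho=0$, so $u=|\xi|$ and $\Sigma_t$ touches the leaf $\{|\xi|/\varphi=\rho_{\max}\}$ from inside; comparing mean curvatures gives $H\ge n\varphi/|\xi|$ there (Proposition \ref{umbilical}), and since $\partial_t\rho$ equals $n\varphi-uH$ times a positive factor, $\max_{\Sigma_t}\rho$ is non-increasing. Symmetrically $\min_{\Sigma_t}\rho$ is non-decreasing, so $\rho$ stays in the compact range fixed by $\Sigma_0$, in particular bounded away from the singular set $\mathcal Z(\xi)$. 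Next is the gradient estimate, and this is exactly where hypothesis (ii) enters: applying the maximum principle to a suitable function of $w:=|\xi|/u$ (equivalently, to $|\nabla\rho|$) and using $\varphi^2-\xi(\varphi)>0$, $\varphi>0$ and (iv) to absorb the reaction terms, one gets $u\ge c_0>0$, hence $w\le C_0$. This simultaneously shows star-shapedness is preserved and makes the $\rho$-equation uniformly parabolic with a $C^1$-bounded solution.

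The main difficulty is the second order estimate. One computes the evolution of the mean curvature (or of the largest principal curvature); its reaction terms involve $|A|^2$-type terms, ambient curvature terms, and terms coming from $\overline{\nabla}u$ and $\xi(\varphi)$, while the non-local factor $n\varphi-uH$ in the speed must be handled globally through the Minkowski identity of Lemma \ref{lemma-minikowsi}. The point is that conditions (ii) and (iv) make the ambient and gradient contributions have the right sign, so that a maximum principle (together, if needed, with an integral/Stampacchia argument fed by the monotonicity of the area in Theorem \ref{monotonethm}) yields $\max_{\Sigma_t}|A|\le C$. With uniform $C^0$, $C^1$ and $C^2$ bounds the equation is uniformly parabolic with bounded, smooth coefficients, so Krylov--Safonov and Schauder estimates give uniform $C^k$ bounds for every $k$ on any finite time interval; the usual continuation argument then shows the flow exists for all $t\in[0,\infty)$.

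Finally, convergence. By Theorem \ref{monotonethm} the enclosed volume is constant while $|\Sigma_t|$ is non-increasing, and the uniform estimates keep $|\Sigma_t|$ in a compact range, so $|\Sigma_t|\to A_\infty$ and $\int_0^\infty\big(-\tfrac{d}{dt}|\Sigma_t|\big)\,dt<\infty$. Inspecting the monotonicity formula, whose equality case via Lemma \ref{lemma-minikowsi} is precisely the totally umbilical leaves, one extracts a quantitative gap of the form $-\tfrac{d}{dt}|\Sigma_t|\ge c\int_{\Sigma_t}|\mathring A|^2$, so there is $t_k\to\infty$ with $\int_{\Sigma_{t_k}}|\mathring A|^2\to0$; by the uniform estimates a subsequence of $\Sigma_{t_k}$ converges smoothly to a closed totally umbilical hypersurface, which by (iii) is a leaf $\{|\xi|/\varphi=\rho_\infty\}$. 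Since $\max\rho$ and $\min\rho$ are monotone in $t$ and both tend to $\rho_\infty$ along this subsequence, $\mathrm{osc}_{\Sigma_t}\rho\to0$, and interpolating with the higher order bounds gives $\Sigma_t\to\Sigma_\infty=\{|\xi|/\varphi=\rho_\infty\}$ smoothly, with $\rho_\infty$ determined by the conserved volume. By Proposition \ref{umbilical} the unit normal of $\Sigma_\infty$ is $\mathcal N=\xi/|\xi|$, which by (iv) attains the least Ricci curvature on $M'$, so $\overline{Ric}(\nu_\infty,\nu_\infty)=\overline{Ric}(\mathcal N,\mathcal N)$, as claimed. The step I expect to fight hardest with is the curvature ($C^2$) estimate, where the interplay between the non-local speed, the Minkowski identity and conditions (ii), (iv) has to be made to work; the upgrade from subsequential to full smooth convergence is the other delicate point, handled above via the monotonicity of $\max\rho$ and $\min\rho$.
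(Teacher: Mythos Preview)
Your outline has the right shape, but two steps diverge from the paper in ways that matter.

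First, the order of the scalar estimates. You propose proving $u\ge c_0$ before bounding the curvature, but at a spatial minimum of $u$ the evolution (\ref{ev-support fun}) contains the term $-2\varphi uH$, which cannot be absorbed without an a priori upper bound on $H$. The paper reverses the order: it first proves $H\le C$ by applying the maximum principle to the test function $\phi=H+|\xi|^2/\varphi^2$, using conditions (ii) and (iv) to control the reaction terms and a case split on the sign of $\tfrac{H}{2}+2\lambda_{\min}$; only then does it read off $u\ge\epsilon$ from (\ref{ev-support fun}), since with $H$ bounded every term on the right tends to $0$ as $u\to0$ except $n(\varphi^2-\xi(\varphi))>0$. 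Note also that the speed $n\varphi-uH$ is purely local, so no Minkowski or Stampacchia argument is needed for the $C^2$ step. For higher regularity the paper does \emph{not} estimate $|A|$ directly: once $u\ge\epsilon$, the scalar equation (\ref{C0est-eq1}) for $|\xi|^2/\varphi^2$ is uniformly parabolic, standard theory gives all $C^k$ bounds on that function, and $h_{ij}$ is recovered algebraically from its Hessian because $\nabla_j\nabla_i(|\xi|^2/\varphi^2)$ contains $-2\tfrac{\varphi^2-\xi(\varphi)}{\varphi^3}u\,h_{ij}$ with a uniformly positive coefficient.

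Second, your convergence argument contains a genuine gap. You assert that the subsequential totally umbilical limit ``by (iii) is a leaf $\{|\xi|/\varphi=\rho_\infty\}$'', but (iii) only says leaves of $\mathcal F(\xi)$ are totally umbilical CMC hypersurfaces, not the converse. Theorem~\ref{convergence-0} does \emph{not} claim $\Sigma_\infty$ is a leaf; that identification requires the extra hypotheses of Theorem~\ref{isoineq1} or Theorem~\ref{isoineq2} and is carried out separately in \S\ref{sol to isoper}. What the theorem actually asserts---total umbilicity and $\overline{Ric}(\nu_\infty,\nu_\infty)=\overline{Ric}(\mathcal N,\mathcal N)$---comes directly from $A'(t)\to0$ and the integrand in (\ref{mono-eq1}): equality in Newton--Maclaurin forces umbilicity, and the Ricci term forces the second conclusion. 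Since your upgrade from subsequential to full convergence is routed through the unjustified leaf claim (via $\max\rho,\min\rho\to\rho_\infty$), that step needs to be replaced; the paper gets convergence from the uniform $C^k$ control of the scalar function $|\xi|^2/\varphi^2$ instead.
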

\begin{remark}
	\begin{itemize}
		\item[(1)] By Obata's result \cite{obata70}, the condition $\varphi>0$ is sufficient to ensure that singular points are isolated. The following identity
		\[	\overline{\nabla}\frac{\vert\xi\vert^2}{\varphi^2}= 2\frac{\varphi^2-\xi(\varphi)}{\varphi^3}\xi\]
		holds (see Prop \ref{conformal vec prop1}), which means the level sets of the function $\frac{\vert\xi\vert^2}{\varphi^2}$ on $M'$ are hypersurfaces oriented by $\mathcal{N}$ under the conditions $\varphi>0$ and $\varphi^2-\xi(\varphi)>0$. Moreover, $\frac{\vert\xi\vert^2}{\varphi^2}$ is strictly increasing along the direction $\xi$. Hence, $\mathcal{Z}(\xi)$ contains at most one point.
		\item[(2)] The condition $\varphi^2-\xi(\varphi)>0$ implies that the mean curvatures of leaves of $\mathcal{F}(\xi)$ are strictly decreasing along the direction $\xi$ (see (1) in Remark \ref{C0remark}). If $M$ is a warped product space $(M,\bar{g})=(\mathbb{R}\times_{\phi}\mathbf{B}^n,dr^2+\phi^2(r)\tilde{g})$, then $\varphi^2-\xi(\varphi)=(\phi')^2-\phi''\phi$.
		\item[(3)] The condition (iv) on the Ricci curvature is needed for the monotone property of the flow. If $\xi$ is closed, this condition implies that the compact leaf of $\mathcal{F}(\xi)$ is a stable CMC hypersurface (see \cite{montiel98}).
		\item[(4)] To prove the existence and convergence of the flow, Guan and Li \cite{guan-li15}(see also \cite{Guan-Li-Wang2019}) considered a equivalent equation of the radial function in warped product spaces. In our case, loss of such a radial function forces us to control the second fundamental form and its higher derivatives. Fortunately, we discover that the second fundamental form can be generated by derivatives of the function $\frac{\vert\xi\vert^2}{\varphi^2}$. Thus, we only need to obtain a priori estimates of the function $\frac{\vert\xi\vert^2}{\varphi^2}$ along the flow.
	\end{itemize}
\end{remark}
Next, we are attempted to prove the limit hypersurface $\Sigma_{\infty}$ of the solution is a leaf of the foliation $\mathcal{F}(\xi)$, then combining with the monotonicity in Theorem \ref{monotonethm}, we can obtain the isoperimetric inequality for star-shaped hypersurfaces in $M$ as a conclusion. Let $S(r)=\{p\in M|\frac{\vert\xi\vert}{\varphi}(p)=r\}$ be the level set of the function $\frac{\vert\xi\vert}{\varphi}$. Given any $r_1\geq 0$, assume that $\Sigma\subset M'$ is a  closed, star-shaped hypersurface satisfies $\min_{\Sigma}\frac{\vert\xi\vert}{\varphi}>r_1$, then there exists a bounded domain $\Omega$ enclosed by $\Sigma$ and $S(r_1)$.  For any $r>r_1$, denote $B(r)$ the bounded domain enclosed by $S(r)$ and $S(r_1)$. If $M$ has a singular point, then $r_1$ can be equal to 0.

If the Ricci curvature condition (iv) in Theorem \ref{convergence-0} is strict, which means the $\mathcal{N}$ direction is the only direction of least Ricci curvature. Then $\Sigma_{\infty}$ is sure a leaf of $\mathcal{F}(\xi)$ and the following isoperimetric inequality is a directly corollary of Theorem \ref{convergence-0}.
\begin{theorem}\label{isoineq1}
Assume that $(M^{n+1},\bar{g})$ is a Riemannian manifold endowed with a complete conformal vector field satisfying all the conditions in Theorem \ref{convergence-0}. Moreover, we assume extra that our hypothesis on the Ricci curvature of $M$ is strict. Let $\Sigma\subset M'$ be a star-shaped, closed hypersurface which satisfies $\min_{\Sigma}\frac{\vert\xi\vert}{\varphi}>r_1$, then 
		\begin{align}\label{isoineq-in1}
			Area(\Sigma)\geq Area(S(r^*)),
		\end{align}
	where $r^*$ is the unique real number determined by $\text{Vol}(B(r^*))=\text{Vol}(\Omega)$. Moreover, "$=$" attains in (\ref{isoineq-in1}) if and only if $\Sigma=S(r^*)$.
\end{theorem}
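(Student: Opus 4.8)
The plan is to obtain (\ref{isoineq-in1}) by running the mean curvature type flow (\ref{flow-0}) with initial datum $\Sigma_0=\Sigma$ and reading off the conclusion from Theorem \ref{convergence-0} together with the monotonicity of Theorem \ref{monotonethm}. Since $\Sigma$ is a closed, star-shaped hypersurface in $M'$ and all of the hypotheses (i)--(iv) hold, Theorem \ref{convergence-0} produces a smooth solution $\{\Sigma_t\}_{t\in[0,\infty)}$ of (\ref{flow-0}); along the flow each $\Sigma_t$ is a closed, star-shaped hypersurface in $M'$ diffeomorphic to $\Sigma$, the support function $u=\langle\xi,\nu\rangle$ stays positive, and $\Sigma_t$ converges smoothly as $t\to\infty$ to a closed, totally umbilical hypersurface $\Sigma_{\infty}$ with $\overline{Ric}(\nu_{\infty},\nu_{\infty})=\overline{Ric}(\mathcal{N},\mathcal{N})$ at every point of $\Sigma_{\infty}$, where $\mathcal{N}=\frac{\xi}{|\xi|}$.

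The crucial step is to recognize $\Sigma_{\infty}$ as a leaf of $\mathcal{F}(\xi)$. Because the Ricci hypothesis (\ref{riccicond-0}) is assumed strict, at each $p\in M'$ the function $X\mapsto\overline{Ric}(X,X)$ on the unit sphere of $T_pM$ attains its minimum only at $\pm\mathcal{N}(p)$. Since $\nu_{\infty}$ and $\mathcal{N}$ are unit vectors with equal Ricci curvature at every point of $\Sigma_{\infty}$, this forces $\nu_{\infty}=\pm\mathcal{N}$ there; and since $\Sigma_{\infty}$ is connected (a smooth limit of the connected $\Sigma_t$) while $\langle\xi,\nu_{\infty}\rangle\ge 0$ by the preserved star-shapedness, we conclude $\nu_{\infty}\equiv\mathcal{N}$ and $\langle\xi,\nu_{\infty}\rangle\equiv|\xi|>0$. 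Thus $\Sigma_{\infty}$ is everywhere orthogonal to $\xi$, i.e.\ tangent to the distribution $\mathcal{D}$, hence it is a connected leaf of $\mathcal{F}(\xi)$; by hypothesis (iii) it is a level set, $\Sigma_{\infty}=S(r^*)$ for some $r^*>0$. Upgrading the pointwise equality of Ricci curvatures to the pointwise identity $\nu_{\infty}=\mathcal{N}$ is precisely where strictness is used, and is essentially the only new ingredient beyond Theorem \ref{convergence-0} --- the real analytic difficulty (long-time existence and convergence, in the absence of a radial graph representation) having already been absorbed into that theorem.

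It remains to track volume and area along the flow. By Theorem \ref{monotonethm}, which rests on the Minkowski identity of Lemma \ref{lemma-minikowsi}, the flow (\ref{flow-0}) preserves the volume of the region $\Omega_t$ enclosed by $S(r_1)$ and $\Sigma_t$, so $\text{Vol}(\Omega_t)\equiv\text{Vol}(\Omega)$, and $t\mapsto Area(\Sigma_t)$ is non-increasing. Letting $t\to\infty$ and using the smooth convergence $\Sigma_t\to S(r^*)$ gives $\text{Vol}(B(r^*))=\text{Vol}(\Omega)$; since $\text{Vol}(\Omega)>0$ while $r\mapsto\text{Vol}(B(r))$ vanishes at $r=r_1$ and is strictly increasing for $r>r_1$, this forces $r^*>r_1$ and identifies $r^*$ as the unique number in the statement. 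Likewise $Area(\Sigma)=Area(\Sigma_0)\ge\lim_{t\to\infty}Area(\Sigma_t)=Area(S(r^*))$, which is (\ref{isoineq-in1}).

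For the rigidity statement, suppose equality holds in (\ref{isoineq-in1}). Then $Area(\Sigma_t)$ is non-increasing, equals $Area(\Sigma)$ at $t=0$, and tends to $Area(S(r^*))=Area(\Sigma)$, hence is constant in $t$; in particular $\frac{d}{dt}Area(\Sigma_t)$ vanishes at $t=0$. The computation behind Theorem \ref{monotonethm} exhibits $\frac{d}{dt}Area(\Sigma_t)$ as minus the integral over $\Sigma_t$ of a non-negative quantity --- built from a Newton--MacLaurin term and the term controlled by (\ref{riccicond-0}) --- whose pointwise vanishing forces $\Sigma_t$ to be totally umbilical and, under the strictness of (\ref{riccicond-0}), $\nu=\mathcal{N}$ along $\Sigma_t$. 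Applying this at $t=0$ shows $\Sigma$ is itself a leaf of $\mathcal{F}(\xi)$, so $\Sigma=S(r)$ by (iii), and then $\text{Vol}(B(r))=\text{Vol}(\Omega)=\text{Vol}(B(r^*))$ forces $r=r^*$, i.e.\ $\Sigma=S(r^*)$. Within the present argument the main obstacle is to pin down the exact form of that integrand so that its equality case characterizes precisely the leaves of $\mathcal{F}(\xi)$; the heavier analysis is subsumed in Theorem \ref{convergence-0}.
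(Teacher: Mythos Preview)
Your proof is correct and follows essentially the same route as the paper: run the flow (\ref{flow-0}) from $\Sigma$, invoke Theorem \ref{convergence-0} for convergence to a totally umbilical $\Sigma_\infty$ with $\overline{Ric}(\nu_\infty,\nu_\infty)=\overline{Ric}(\mathcal{N},\mathcal{N})$, use strictness of (\ref{riccicond-0}) to force $\nu_\infty=\mathcal{N}$ so that $\Sigma_\infty=S(r^*)$ (this is the paper's Corollary \ref{strictricci}), and combine with the monotonicity Theorem \ref{monotonethm} for the inequality and its equality case. The only cosmetic difference is in the rigidity step: the paper argues by contraposition (if $\Sigma$ is not a leaf then $A'(0)<0$ directly from (\ref{mono-eq1})), whereas you argue that $A'(0)=0$ forces the full integrand in (\ref{mono-eq1}) to vanish pointwise; both reach the same conclusion.
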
 
\begin{remark}
    It's worth to mention that there are many manifolds which satisfy all of the conditions besides the warped product space with a 1-dimensional factor. For example, we consider that $(M,\bar{g})=(B^{n+1},e^{2\omega}g_{0})$ is the unit disk in $\mathbb{R}^{n+1}$ endowed with a metric $\bar{g}$ which is conformal to the Euclidean metric $g_0=dr^2+r^2g_{S^n}(\theta)$, the conformal factor $\omega$ is a smooth function on $B^{n+1}$. Then, the position vector field $\xi=r\partial_r$ is a conformal vector field, but not closed unless $\omega$ is a radial function. At this time, the leaves of $\mathcal{F}(\xi)$ are the slices $\{r\}\times S^n$ for $r\in(0,1)$. If we choose the conformal factor as $\omega=-\ln(f(r)-rq(\theta))$, where $f(r)$ is a radial function and  $q(\theta)$ is a function on $S^n$. Then, it's easy to check that any slice $\{r\}\times S^n$ has constant mean curvature. And it's not difficult to adjust $f(r)$ and $q(\theta)$ to meet all of the conditions in Theorem \ref{isoineq1}.
\end{remark}

If we don't assume that the inequality (\ref{riccicond-0}) is strict, we need an extra condition on the sectional curvature of the  direction $\xi$ to prove $\Sigma_{\infty}$ is a leaf of the foliation $\mathcal{F}(\xi)$  (see \S \ref{sol to isoper}). Then, the following isoperimetric inequality holds:
\begin{theorem}\label{isoineq2}
    Assume that $(M^{n+1},\bar{g})$ is a Riemannian manifold satisfies all the conditions in Theorem \ref{convergence-0}. Moreover, assume that  the sectional curvature of the direction  $\xi$  satisfies 
	\begin{align}\label{seccurcon}
	K(X,\xi)\geq -\frac{\varphi^2}{\vert\xi\vert^2},\ \forall X\in TM,\ X\neq\xi,
\end{align}
where $K(X,\xi)=\frac{\bar{R}(X,\xi,X,\xi)}{\vert X\vert^2\vert\xi\vert^2-\langle X,\xi\rangle^2}$ is the sectional curvature of $\bar{g}$. Let $\Sigma\subset M'$ be a star-shaped, closed hypersurface which satisfies $\min_{\Sigma}\frac{\vert\xi\vert}{\varphi}>r_1$, then 
	\begin{align*}
		Area(\Sigma)\geq Area(S(r^*)),
	\end{align*}
	where  $r^*$ is the unique real number determined by $\text{Vol}(B(r^*))=\text{Vol}(\Omega)$.
\end{theorem}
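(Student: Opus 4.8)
The plan is to evolve $\Sigma$ by the flow $(\ref{flow-0})$ with initial datum $\Sigma_{0}=\Sigma$, and to read the inequality off from the monotonicity of area together with the conservation of enclosed volume. Since all the hypotheses of Theorem \ref{convergence-0} hold, the flow exists on $[0,+\infty)$ and $\Sigma_{t}\to\Sigma_{\infty}$ smoothly, where $\Sigma_{\infty}\subset M'$ is a closed, totally umbilical hypersurface whose unit normal $\nu_{\infty}$ realizes least Ricci curvature, $\overline{Ric}(\nu_{\infty},\nu_{\infty})=\overline{Ric}(\mathcal{N},\mathcal{N})$. Being a smooth limit of a purely normal flow, $\Sigma_{\infty}$ is stationary: $n\varphi=u_{\infty}H_{\infty}$ on $\Sigma_{\infty}$, so its second fundamental form is $h_{ij}=\tfrac{\varphi}{u_{\infty}}g_{ij}$. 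By Theorem \ref{monotonethm}, the volume of the region $\Omega_{t}$ enclosed by $S(r_{1})$ and $\Sigma_{t}$ stays equal to $\text{Vol}(\Omega)$, while $Area(\Sigma_{t})$ is non-increasing; hence $Area(\Sigma)\ge Area(\Sigma_{\infty})$ and $\text{Vol}(\Omega_{\infty})=\text{Vol}(\Omega)$, with $\Omega_{\infty}$ the region enclosed by $S(r_{1})$ and $\Sigma_{\infty}$. Everything now reduces to showing that $\Sigma_{\infty}$ is a leaf of $\mathcal{F}(\xi)$.

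Since $\overline{\nabla}\tfrac{|\xi|^{2}}{\varphi^{2}}=2\tfrac{\varphi^{2}-\xi(\varphi)}{\varphi^{3}}\xi$ is a nowhere-zero multiple of $\xi$ on $M'$ (Proposition \ref{conformal vec prop1} and condition (ii)), it is equivalent to show that the tangential part $\xi^{\top}$ of $\xi$ vanishes identically on $\Sigma_{\infty}$; then $\tfrac{|\xi|^{2}}{\varphi^{2}}$ is constant on the connected $\Sigma_{\infty}$, so $\Sigma_{\infty}$ is a component of a level set $S(r^*)$, hence a leaf by condition (iii). I would first observe that $\Sigma_{\infty}$ has constant mean curvature: the Codazzi equation applied to the totally umbilical $\Sigma_{\infty}$ gives $\nabla_{i}\big(\tfrac{H_{\infty}}{n}\big)=\tfrac{1}{n-1}\overline{Ric}(\nu_{\infty},e_{i})$, and since (by condition (iv)) $\nu_{\infty}$ minimizes $\overline{Ric}$ over unit vectors it is a Ricci eigenvector, so $\overline{Ric}(\nu_{\infty},e_{i})=0$ along $\Sigma_{\infty}$ and $H_{\infty}\equiv nh$ is constant. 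Then, at any point where $\tfrac{|\xi|}{\varphi}$ restricted to $\Sigma_{\infty}$ attains an extremum, $\xi^{\top}$ vanishes (the tangential gradient of $\tfrac{|\xi|}{\varphi}$ is a nonzero multiple of $\xi^{\top}$), so there $\xi=u_{\infty}\nu_{\infty}$ and, by stationarity, $\tfrac{\varphi}{|\xi|}=\tfrac{\varphi}{u_{\infty}}=\tfrac{H_{\infty}}{n}=h$; since $h$ is constant, both the maximum and the minimum of $\tfrac{\varphi}{|\xi|}$ on $\Sigma_{\infty}$ equal $h$, whence $\xi^{\top}\equiv0$. The hypothesis $(\ref{seccurcon})$ furnishes the more computational route of \S\ref{sol to isoper}: one evaluates $\Delta_{\Sigma_{\infty}}\tfrac{|\xi|^{2}}{\varphi^{2}}$ (equivalently $\Delta_{\Sigma_{\infty}}|\xi^{\top}|^{2}$) on the closed hypersurface $\Sigma_{\infty}$, the zeroth-order terms cancelling by stationarity and umbilicity, and the ambient curvature contraction $\bar{R}(\,\cdot\,,\xi,\,\cdot\,,\xi)$ produced on commuting derivatives is controlled by $K(X,\xi)\ge-\varphi^{2}/|\xi|^{2}$ — which in a warped product $\mathbb{R}\times_{\phi}\mathbf{B}^{n}$ is exactly condition (ii), $(\phi')^{2}\ge\phi''\phi$ — so the maximum principle yields the conclusion.

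With $\Sigma_{\infty}=S(r^*)$ established, the proof concludes at once. Because $\tfrac{|\xi|}{\varphi}$ is strictly increasing along $\xi$ (the Remark following Theorem \ref{convergence-0}), the function $r\mapsto\text{Vol}(B(r))$ is continuous and strictly increasing, so there is a unique $r^*$ with $\text{Vol}(B(r^*))=\text{Vol}(\Omega)$; the chain $\text{Vol}(B(r^*))=\text{Vol}(\Omega_{\infty})=\text{Vol}(\Omega)$ identifies $\Sigma_{\infty}$ with that $S(r^*)$, and $r^*>r_{1}$ since $\text{Vol}(\Omega)>0=\text{Vol}(B(r_{1}))$ (the condition $\min_{\Sigma}\tfrac{|\xi|}{\varphi}>r_{1}$ ensuring that $\Omega$ is nonempty). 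Combined with the area monotonicity this gives
\begin{align*}
	Area(\Sigma)\ \ge\ Area(\Sigma_{\infty})\ =\ Area(S(r^*)),
\end{align*}
the asserted inequality.

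The step I expect to be the main obstacle is isolating $\Sigma_{\infty}$ as a leaf. For the maximum-principle argument on $\Sigma_{\infty}$ one must control the full geometry of the conformal field — in particular the skew part of $\overline{\nabla}\xi$, which is present precisely because $\xi$ need not be closed (it is governed by $\overline{\nabla}\big(\tfrac{\varphi^{2}-\xi(\varphi)}{\varphi^{3}}\big)\wedge\xi$), together with the curvature terms it generates — and verify that $(\ref{seccurcon})$ is exactly the inequality that closes the estimate with the correct sign, leaving only a gradient term absorbed by the maximum principle. When hypothesis (iv) is strict this step is unnecessary, since then $\nu_{\infty}=\pm\mathcal{N}$ pointwise and $\Sigma_{\infty}$ is automatically a level set of $\tfrac{|\xi|}{\varphi}$; that is how Theorem \ref{isoineq1} is proved.
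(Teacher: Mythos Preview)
Your overall plan matches the paper's: run the flow, invoke Theorem \ref{monotonethm} for volume preservation and area monotonicity, invoke Theorem \ref{convergence-0} for long-time existence and convergence to a totally umbilical $\Sigma_\infty$ with $\overline{Ric}(\nu_\infty,\nu_\infty)=\overline{Ric}(\mathcal{N},\mathcal{N})$, identify $\Sigma_\infty$ as a leaf, and read off the inequality. The divergence is in the identification step.

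Your primary route there is a static argument on $\Sigma_\infty$: stationarity gives $n\varphi=u_\infty H_\infty$; since $\nu_\infty$ realizes the minimum of the Ricci form it is an eigenvector, so Codazzi forces $H_\infty$ constant; then at both the maximum and the minimum of $|\xi|/\varphi$ on $\Sigma_\infty$ one has $\xi^\top=0$, hence $u_\infty=|\xi|$ and $|\xi|/\varphi=n/H_\infty$ coincides at the two extrema, so $|\xi|/\varphi$ is constant. This is a genuinely different argument and, notably, never uses the hypothesis (\ref{seccurcon}). The paper does not do this. In \S\ref{sol to isoper} it runs a \emph{parabolic} maximum principle for $\eta=|\xi|^2/u^2-1$ along the flow --- not the elliptic computation on $\Sigma_\infty$ that you sketch. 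From the evolution equation (\ref{fun2}) one localizes at the spatial maximum of $\eta$, shows that $h_{11}$ is a principal curvature there, replaces the remaining $\lambda_i$ by $h_{11}+\delta_i(t)$ using asymptotic umbilicity, and then the bound $K(X,\xi)\geq-\varphi^2/|\xi|^2$ controls the leftover curvature term $-2u^{-1}\bar{R}(\nu,\xi,\nu,\xi)$, yielding $\partial_t\eta\leq-\epsilon_1\eta^2+\delta(t)$ and hence $\eta\to0$ (Theorem \ref{limit surface}).

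The weak link in your route is the appeal to stationarity. You deduce $n\varphi=u_\infty H_\infty$ from smooth convergence, but that requires \emph{full} (not merely subsequential) convergence of $\Sigma_t$; the proof of Theorem \ref{convergence} supplies uniform $C^k$ bounds and, via $A'(t)\to0$, asymptotic umbilicity --- neither of which forces the flow speed $n\varphi-uH$ to vanish in the limit. Full convergence is normally pinned down \emph{after} one knows the limit is the unique volume-matching leaf, so invoking stationarity beforehand is circular. The paper's parabolic decay of $\eta$ needs only asymptotic umbilicity plus (\ref{seccurcon}), and its output is exactly what establishes uniqueness of the limit. If your eigenvector/extremum argument can be made to work without assuming stationarity in advance, it would render (\ref{seccurcon}) unnecessary --- worth investigating.
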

\begin{remark}
	The condition (\ref{seccurcon})  implies that the Ricci curvatures of the leaf of $\mathcal{F}(\xi)$ are nonnegative (see \S \ref{sol to isoper} for details).
\end{remark}
Finally, in \S \ref{closed}, we will work on the special case that $\xi$ is closed. At this time, the assumption on the sectional curvature in Theorem \ref{isoineq2} holds if we assume that $\varphi^2-\xi(\varphi)\geq0$, then the isoperimetric inequality holds as a conclusion. But we shall give a new proof of the all time existence and convergence of the solution, and we can release the condition $\varphi^2-\xi(\varphi)>0$ to $\varphi^2-\xi(\varphi)\geq 0$ in the process (see Theorem \ref{conergence-closed}). When $\xi$ is closed, it has been proven that the function $\varphi$ is constant along any leaf of the foliation $\mathcal{F}(\xi)$ (see \cite{Montiel99}). By the identity $\overline{\nabla}\vert\xi\vert^2=2\varphi\xi$ (see Prop \ref{cconformal mfld1}), the leaf of $\mathcal{F}(\xi)$ can be described as the level set of $\vert\xi\vert$ at this time. Let $\bar{S}(r)=\{p\in M|\vert\xi\vert=r\}$ be the level set of the function $\vert\xi\vert$. Given any $r_1\geq 0$, assume that $\Sigma\subset M'$ is a closed, star-shaped hypersurface satisfies $\min_{\Sigma}\vert\xi\vert>r_1$, denote $\Omega$ the bounded domain enclosed by $\Sigma$ and $\bar{S}(r_1)$.  For any $r>r_1$, denote $\bar{B}(r)$ the bounded domain enclosed by $\bar{S}(r)$ and $\bar{S}(r_1)$. 

The isoperimetric inequality reads as follow:

\begin{theorem}\label{isoineq3}
	Assume that $(M^{n+1},\bar{g})$ is a Riemannian manifold endowed with a closed, complete conformal vector field $\xi$ satisfies $\varphi>0$, $\varphi^2-\xi(\varphi)\geq 0$. 
 And we also assume that the condition (iv) in Theorem \ref{convergence-0} holds. Let $\Sigma\subset M'$ be a star-shaped, closed hypersurface which satisfies $\min_{\Sigma}\frac{\vert\xi\vert}{\varphi}>r_1$, then 
	\begin{align*}
		Area(\Sigma)\geq Area(\bar{S}(r^*)),
	\end{align*}
	where $r^*$ is the unique real number determined by $\text{Vol}(\bar{B}(r^*))=\text{Vol}(\Omega)$.
\end{theorem}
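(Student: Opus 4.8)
The strategy is to run the flow (\ref{flow-0}) from $\Sigma$ and argue as in the proof of Theorem~\ref{isoineq2}, with the all-time existence and smooth convergence supplied by the version of the convergence theorem valid in the closed case, Theorem~\ref{conergence-closed}, which only needs $\varphi^{2}-\xi(\varphi)\ge 0$. The first step is to check that the geometric hypotheses behind Theorem~\ref{isoineq2} are available. From $\overline{\nabla}_{X}\xi=\varphi X$ one computes $\bar{R}(X,Y)\xi=X(\varphi)Y-Y(\varphi)X$, so for any $2$-plane containing $\xi$, taking $X\perp\xi$, the sectional curvature is $K(X,\xi)=-\xi(\varphi)/|\xi|^{2}$; hence $K(X,\xi)\ge -\varphi^{2}/|\xi|^{2}$ is equivalent to $\varphi^{2}-\xi(\varphi)\ge 0$, i.e.\ the sectional curvature condition (\ref{seccurcon}) holds under our assumptions. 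Also, since $\xi$ is closed, $\overline{\nabla}|\xi|^{2}=2\varphi\xi$ (Proposition~\ref{cconformal mfld1}), so with $\varphi>0$ the function $|\xi|$ has no critical point on $M'$ and its gradient is everywhere proportional to $\xi$; the level sets $\bar{S}(r)$ are therefore the leaves of $\mathcal{F}(\xi)$, which are closed, totally umbilical CMC hypersurfaces oriented by $\mathcal{N}$ (Proposition~\ref{umbilical}, \cite{Montiel99}), so condition (iii) of Theorem~\ref{convergence-0} is met. Finally $\varphi>0$ makes $r\mapsto\text{Vol}(\bar{B}(r))$ strictly increasing, so $r^{*}$ with $\text{Vol}(\bar{B}(r^{*}))=\text{Vol}(\Omega)$ is unique.

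Next, run (\ref{flow-0}) with $\Sigma_{0}=\Sigma$. By Theorem~\ref{conergence-closed} the flow exists for all $t\in[0,\infty)$ and converges smoothly to a closed, totally umbilical hypersurface $\Sigma_{\infty}$, and by Theorem~\ref{monotonethm} the volume of the region between $\Sigma_{t}$ and $\bar{S}(r_{1})$ stays equal to $\text{Vol}(\Omega)$ while $Area(\Sigma_{t})$ is non-increasing; in particular $Area(\Sigma)\ge Area(\Sigma_{\infty})$, and $\Sigma_{\infty}$ together with $\bar{S}(r_{1})$ still encloses a region of volume $\text{Vol}(\Omega)$.

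It remains to identify $\Sigma_{\infty}$ with a leaf of $\mathcal{F}(\xi)$. Since the flow converges to a stationary configuration, $n\varphi-uH=0$ on $\Sigma_{\infty}$, and $u>0$ there (else $\varphi=0$, impossible), so umbilicity forces the shape operator to be $A=(\varphi/u)\,\mathrm{Id}$. Writing $\xi=\xi^{T}+u\nu_{\infty}$ on $\Sigma_{\infty}$ and splitting $\overline{\nabla}_{X}\xi=\varphi X$ into its tangential and normal parts gives $\nabla_{X}\xi^{T}=\varphi X-uA(X)=0$, so $\xi^{T}$ is parallel and $|\xi^{T}|$ is constant; the normal part shows $\nabla u$ is everywhere parallel to $\xi^{T}$ with a nonvanishing factor (since $\varphi,u>0$), and evaluating at a maximum point of $u$ on the closed manifold $\Sigma_{\infty}$ (where $\nabla u=0$) gives $\xi^{T}=0$ there, hence $\xi^{T}\equiv 0$. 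Thus $\xi$ is everywhere normal to $\Sigma_{\infty}$, so $\Sigma_{\infty}$ is an integral hypersurface of $\mathcal{D}$, i.e.\ $\Sigma_{\infty}=\bar{S}(r^{*})$ for some $r^{*}$; the preserved volume forces $\text{Vol}(\bar{B}(r^{*}))=\text{Vol}(\Omega)$, which pins $r^{*}$ down, and $Area(\Sigma)\ge Area(\Sigma_{\infty})=Area(\bar{S}(r^{*}))$ is the assertion. (Alternatively the identification follows from the general argument of \S\ref{sol to isoper}, which applies once (\ref{seccurcon}) is known.)

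The assembly above is routine; the real work lies in the input Theorem~\ref{conergence-closed}, i.e.\ re-establishing all-time existence and convergence under only $\varphi^{2}-\xi(\varphi)\ge 0$. In the setting of Theorem~\ref{convergence-0} the strict inequality $\varphi^{2}-\xi(\varphi)>0$ makes $|\xi|^{2}/\varphi^{2}$ strictly monotone along $\xi$ and underpins the a priori estimates; when it is only non-strict one must instead work with $|\xi|$ (which $\varphi>0$ alone keeps strictly monotone along $\xi$), recover the second fundamental form and its higher derivatives from derivatives of $|\xi|$ along the flow, and close the estimates without that strict-monotonicity crutch. I expect this, rather than the final assembly, to be the main obstacle.
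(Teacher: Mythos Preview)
Your proposal is correct and follows the paper's route: invoke Theorem~\ref{conergence-closed} for long-time existence and convergence, then Theorem~\ref{monotonethm} for the monotonicity, and read off the inequality. The paper's proof of Theorem~\ref{isoineq3} is literally the one-sentence ``applying Theorem~\ref{monotonethm}'' after Theorem~\ref{conergence-closed}, so you are doing exactly what the paper does at the assembly level, and you also correctly locate the real content in Theorem~\ref{conergence-closed}.

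Where you diverge is in the leaf-identification step, and it is worth noting. Theorem~\ref{conergence-closed} already asserts convergence to a leaf of $\mathcal{F}(\xi)$; the paper obtains this inside its proof from the quantitative decay $\max_{\Sigma_t}\omega\le C_1/\sqrt{t+1}$ (Proposition~\ref{C1-est-cc}), which forces $|\xi|^2=u^2$ in the limit. You instead take only umbilicity of $\Sigma_\infty$ from the convergence, observe that stationarity gives $H=n\varphi/u$ and hence $A=(\varphi/u)\mathrm{Id}$, and then show directly that $\xi^{T}$ is parallel (from the tangential part of $\overline{\nabla}_X\xi=\varphi X$) and therefore vanishes by evaluating $\nabla u=(\varphi/u)\xi^{T}$ at a maximum of $u$. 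This is a genuinely different and cleaner argument than the paper's $\omega$-decay route; it avoids the long computation of the evolution of $\omega$ for the purpose of identifying the limit, and it uses only the closedness of $\xi$ and the closedness of $\Sigma_\infty$. The trade-off is that the paper's $\omega$-estimate is doing double duty: it also supplies the uniform lower bound on $u$ needed to make the evolution equation of $|\xi|^2$ uniformly parabolic and thus to get long-time existence under the weak hypothesis $\varphi^2-\xi(\varphi)\ge 0$. So while your leaf-identification argument is a nice replacement for one use of Proposition~\ref{C1-est-cc}, you still need that proposition (or something like it) for the existence part of Theorem~\ref{conergence-closed}, exactly as you anticipate in your final paragraph.
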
 
\begin{remark}
	As we known, the warped product space $(M,\bar{g})=(\mathbb{R}\times_{\phi}\mathbf{B}^n,dr^2+\phi^2\tilde{g})$ studied by Guan, Li and Wang \cite{Guan-Li-Wang2019} is a special class of  Riemannian manifolds admitting closed, non-trivial conformal vector fields. In their paper, the conditions $0\leq(\phi')^2-\phi''\phi\leq K$ and $\tilde{Ric}\geq (n-1)K\tilde{g}$ for some constant $K>0$ are needed, where $\tilde{Ric}$ is the Ricci curvature of $\tilde{g}$. Compared with our assumptions, their conditions are stronger than $\varphi^2-\xi(\varphi)\geq 0$ and the condition (iv) on Ricci curvature. 
\end{remark}

We want to state the following conjecture as the end.

\textbf{Conjecture} Assume that a $(n+1)$-dimensional Riemannian manifold $M$ endowed with a codimension one complete foliation $\mathcal{F}$ such that, each leaf of $\mathcal{F}$ is a closed, totally umbilical hypersurface with constant mean curvature. Then, these leaves are the solution of the isoperimetric problem in $M$. 

The rest of this paper is organized as follows: In \S \ref{preliminaries}, we collect and prove some properties in the Riemannian manifold $M$ admitting a non-trivial conformal vector field, especially the Minkowski identities on the hypersurface of $M$. In \S \ref{flow intro}, we introduce the volume preserving flow (\ref{flow-0}) in $M$ and derive some basic evolution equations. In  \S \ref{convergence thm}, we prove the crucial a prior estimates on the mean curvature and the support function, and we complete the proof of Theorem \ref{convergence-0} and Theorem \ref{isoineq1}. In \S \ref{sol to isoper}, we prove the solution hypersurfaces converge to a leaf of the foliation $\mathcal{F}(\xi)$ under the assumption on the sectional curvature and obtain the isoperimetric inequality in Theorem \ref{isoineq2}. In \S \ref{closed}, we work on the case that $\xi$ is closed and prove Theorem \ref{isoineq3}.

\begin{ack}
	The research was surpported by National Key R and D Program of China 2021YFA1001800 and 2020YFA0713100, NSFC 11721101, China Postdoctoral Science Foundation No.2022M723057 and the Fundamental Research Funds for the Central Universities.
\end{ack}

\section{Riemannian manifolds admitting non-trivial conformal vector fields}\label{preliminaries}
\subsection{Preliminaries}
Let $(M^{n+1},\bar{g})$ be a Riemannian manifold with a non-trivial conformal vector field $\xi$, that means
\begin{align}\label{conformal vc1}
	\textit{L}_\xi\bar{g}=2\varphi \bar{g},
\end{align}
where $\varphi=\frac{div\xi}{n+1}$ is the divergence of $\xi$. Then for any vector fields $X,Y\in TM$, we have 
\begin{align}\label{conformalvec}
	\bar{g}(\overline{\nabla}_X\xi,Y)+\bar{g}(\overline{\nabla}_Y\xi,X)=2\varphi\bar{g}(X,Y),
\end{align}
where $\overline{\nabla}$ is the Levi-Civita connection with respect to the metric $\bar{g}$ on $M$.

It's easy to check that the conformal vector field $\xi$ remains  to be conformal under any conformal change of metric on $M$.
Actually, let $\Phi_t$ be a 1-parameter group of conformal transformations of $M$, the vector field on $M$ induced by $\Phi_t$ is a conformal vector field.  Throughout this paper, we assume that $\xi$ is a complete conformal vector field on $M$, that means $\xi$ is induced by a global 1-parameter group. A zero point of $\xi$ is called a singular point which characterizes the fixed point of $\Phi_t$.  Obata \cite[Lemma 2.1]{obata70} showed the following lemma to describe the singular points:
\begin{lemma}
	Let $\xi$ be a complete conformal vector field on $M^{n+1}$, $n\geq 1$. If $\xi$ has a singular point $p_0$ at which its divergence $\varphi$ does not vanish, then $p_0$ is an isolated singular point.
\end{lemma}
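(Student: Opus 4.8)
The plan is to reduce the statement to the invertibility of the linearization of $\xi$ at the singular point $p_0$, after which the inverse function theorem finishes the job. So first I would observe that at a zero $p_0$ of $\xi$ the assignment $v\mapsto\overline{\nabla}_v\xi$ defines a linear endomorphism $A\colon T_{p_0}M\to T_{p_0}M$ that does not depend on the choice of connection: the difference between two connections applied to $\xi$ involves a term proportional to $\xi(p_0)=0$. Concretely, in a local chart centered at $p_0$ with $\xi=\xi^i\partial_i$, one has $A=\bigl(\partial_j\xi^i(p_0)\bigr)$, i.e. $A$ is exactly the Jacobian at $p_0$ of $\xi$ regarded as a smooth map $U\to\mathbb{R}^{n+1}$ (equivalently, $d(\Phi_t)_{p_0}=e^{tA}$ for the flow $\Phi_t$ of $\xi$).

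Next I would exploit the conformal Killing equation \eqref{conformalvec}: polarizing it and evaluating at $p_0$ gives, for all $X,Y\in T_{p_0}M$,
\[
\bar g(AX,Y)+\bar g(AY,X)=2\varphi(p_0)\,\bar g(X,Y),
\]
so the $\bar g$-symmetric part of $A$ is $\varphi(p_0)\,\mathrm{Id}$; write $A=\varphi(p_0)\,\mathrm{Id}+S$ with $S$ skew-symmetric with respect to $\bar g$. Consequently $\bar g(Av,v)=\varphi(p_0)\,|v|^2$ for every $v\in T_{p_0}M$. Since $\varphi(p_0)\neq0$ by hypothesis, $Av=0$ forces $|v|^2=0$, hence $A$ is injective, and therefore—being an endomorphism of a finite-dimensional vector space—invertible.

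Finally, working in the above chart, $\xi$ is a smooth map $U\to\mathbb{R}^{n+1}$ with $\xi(p_0)=0$ and $D\xi(p_0)=A$ invertible, so by the inverse function theorem $\xi$ restricts to a diffeomorphism from a neighborhood of $p_0$ onto a neighborhood of the origin. In particular $p_0$ is the only point of that neighborhood at which $\xi$ vanishes, i.e. $p_0$ is isolated in $\mathcal{Z}(\xi)$.

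There is no genuinely hard step here; the argument is essentially a linear-algebra observation plus the inverse function theorem. The only point deserving a line of care is the first one—checking that $\overline{\nabla}\xi|_{p_0}$ is well defined independently of connection and coincides with the chart Jacobian—and it is worth noting that the completeness of $\xi$, although part of the standing hypotheses, plays no role in this purely local statement.
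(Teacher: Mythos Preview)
Your argument is correct. The paper does not actually prove this lemma; it simply quotes it as \cite[Lemma 2.1]{obata70} and moves on, so there is no ``paper's own proof'' to compare against in detail. Your approach---showing that the linearization $A=\overline{\nabla}\xi|_{p_0}$ has symmetric part $\varphi(p_0)\,\mathrm{Id}$ via the conformal Killing equation, hence is invertible when $\varphi(p_0)\neq0$, and then invoking the inverse function theorem---is the standard and natural one, and is essentially how Obata's original argument runs. Your closing remark that completeness is irrelevant for this local statement is also on point.
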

Under the same assumption, he classified the Riemmanian manifold admitting a complete conformal vector field with singular points as follow:
\begin{theorem}{\cite{obata70}}
	If a Riemmanian manifolds $M^{n+1}(n\geq 1)$ admitting a complete conformal vector field $\xi$ with singular points at each of which its divergence does not finish, then $M$ is conformally diffeomorphic to either a Euclidean sphere or a Euclidean space.
\end{theorem}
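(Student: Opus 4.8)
The plan is to read the local structure of $\xi$ off its linearization at the singular point and then propagate it over all of $M$ by means of the one-parameter group $\Phi_t$ of conformal transformations generated by $\xi$. Replacing $\xi$ by $-\xi$, which replaces $\varphi$ by $-\varphi$, we may assume the divergence at the given singular point $p_0$ satisfies $\varphi(p_0)=c>0$; by the lemma above $p_0$ is isolated. Since $\xi(p_0)=0$, the endomorphism $A:=\overline{\nabla}\xi|_{p_0}$ of $T_{p_0}M$ is well defined, and evaluating the conformal identity (\ref{conformalvec}) at $p_0$ gives $\langle AX,Y\rangle+\langle AY,X\rangle=2c\langle X,Y\rangle$ for all $X,Y$, so $A=c\,\mathrm{Id}+B$ with $B$ skew-symmetric. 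Hence every eigenvalue of $A$ has real part $c>0$: the point $p_0$ is a hyperbolic source of $\Phi_t$ and $d\Phi_t|_{p_0}=e^{tA}=e^{ct}R_t$ with $R_t$ a rotation. Moreover the eigenvalues of $A$ satisfy no resonance relation (a resonance would force the real parts to obey $c=(\sum_j m_j)c$ with $\sum_j m_j\geq 2$), so Sternberg's linearization theorem provides a smooth diffeomorphism $h$ from a ball $B(p_0,\varepsilon)$ onto a neighbourhood of $0$ in $\mathbb{R}^{n+1}\cong T_{p_0}M$ with $h\circ\Phi_t=e^{tA}\circ h$.

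Next I would upgrade this local picture to a conformal one. Since $\Phi_t$ preserves the conformal class $[\bar g]$ and $e^{tA}$ is a flow of conformal transformations of $(\mathbb{R}^{n+1},g_{\mathrm{euc}})$, one expects --- and I would prove, by blowing up $\bar g$ at $p_0$, using that the tangent cone of a Riemannian metric is flat, and noting that the limiting conformal structure is invariant under all homotheties $x\mapsto\lambda x$ --- that $(M,[\bar g])$ is conformally flat on $B(p_0,\varepsilon)$ and that the orbits of $\xi$ emanating from $p_0$ are the ``rays'' of the flat model. In particular there is a compact hypersurface $S\subset B(p_0,\varepsilon)\setminus\{p_0\}$, transverse to $\xi$ and diffeomorphic to $S^n$, that meets each ray exactly once.

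The global step is then to run the flow. The saturation $U:=\bigcup_{t\in\mathbb{R}}\Phi_t(B(p_0,\varepsilon))$ is open, $\Phi$-invariant and, being a union of conformal images of a conformally flat set, conformally flat; every orbit through $B(p_0,\varepsilon)\setminus\{p_0\}$ lies in $U$. Using completeness of $\xi$, together with the fact that $\varphi>0$ near $p_0$ makes the flow gradient-like there, I would analyse the $\omega$-limit set of such an orbit: it either converges to a second singular point $p_1$, which must then be a hyperbolic sink with $\varphi(p_1)<0$ carrying its own local Euclidean chart, or it eventually leaves every compact subset of $M$. In the first case one shows $U=M$, the flow identifies $M\setminus\{p_0,p_1\}$ with $S\times\mathbb{R}$, and matching the two local Euclidean charts across the flow exhibits $M$ as conformally the round sphere $S^{n+1}$ (two conformal balls glued along $S\times\{0\}$, with $\xi$ the conformal field having poles $p_0$ and $p_1$). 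In the second case $U=M\setminus\{p_0\}\cong S\times(0,\infty)$, the single local chart propagates along the flow to a global conformal diffeomorphism of $M$ onto $(\mathbb{R}^{n+1},g_{\mathrm{euc}})$ carrying $\xi$ to the position vector field, so $M$ is conformally Euclidean space.

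The main obstacle is the global analysis, not the algebra at $p_0$. Concretely one must: (i) make the Sternberg linearization compatible with the conformal class and smooth through $p_0$, i.e.\ genuinely establish conformal flatness near $p_0$ in spite of the rotational part $B$ of $A$; (ii) rule out pathological $\omega$-limit behaviour of orbits --- recurrence, limit cycles, or additional singular points --- for which the conformal flatness already forced on $U$, completeness of $\xi$, and $\varphi>0$ are the constraints, and prove that the transverse cross-section is a single compact connected hypersurface; and (iii) show that this cross-section is a topological $n$-sphere, which is ultimately what forces $M$ to be $S^{n+1}$ or $\mathbb{R}^{n+1}$ rather than some other conformally flat manifold. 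Once the global product structure and conformal flatness are in place, assembling the standard model along the flow is routine.
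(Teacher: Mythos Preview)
The paper does not prove this theorem at all: it is quoted verbatim from Obata's 1970 paper \cite{obata70} as background, with no argument supplied. So there is no ``paper's own proof'' to compare your proposal against; the authors simply invoke the result to justify restricting attention to manifolds where $\varphi>0$ and singular points are isolated.

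As for the proposal itself, it is an honest research outline rather than a proof, and you have correctly located the real difficulties. The algebra at $p_0$ (that $A=c\,\mathrm{Id}+B$ with $B$ skew, hence a non-resonant hyperbolic source) is fine, and invoking Sternberg to linearise the flow smoothly is legitimate. But step~(i) is a genuine gap: Sternberg gives you a $C^\infty$ conjugacy of \emph{flows}, not of conformal structures, and there is no automatic mechanism that forces the pulled-back conformal class to be the flat one. Your heuristic (``the tangent cone of a Riemannian metric is flat'') only gives a first-order statement and does not by itself yield conformal flatness of $\bar g$ on a whole neighbourhood; this is exactly the subtle point, and it is where Obata's actual argument does real work (he produces, via a carefully chosen conformal change of metric, a function whose Hessian equation forces the model geometry, rather than arguing dynamically). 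Steps~(ii) and~(iii) are likewise substantial: ruling out recurrence and extra singular points, and identifying the cross-section as $S^n$, are not consequences of what you have written down but require their own arguments. In short, your strategy is plausible in spirit but the items you flag as ``obstacles'' are the entire content of the theorem, not residual technicalities.
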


For this reason, we focus on the Riemannian manifold $M$ admitting a complete conformal vector field $\xi$ with $\varphi>0$ in this paper. Then, the set $\mathcal{Z}(\xi)$ consisting of singular points is either a discrete set or a empty set, and we denote the regular set by $M'=M\setminus\mathcal{Z}(\xi)$ which is a open dense set. Next, in order to understand this type of Riemannian manifolds especially their hypersurfaces, we will give some propositions.
\begin{proposition}\label{umbilical}
	 Assume that the n-dimensional distribution $\mathcal{D}$ defined on $M'$ by 
	\[p\in M'\rightarrow\mathcal{D}(p)=\{v\in T_pM|\langle\xi(p),v\rangle=0\}\]
	is integrable and determines a  Riemannian foliation $\mathcal{F}(\xi)$ whose connected leaf is a closed hypersurface oriented by the unit vector field $\mathcal{N}=\frac{\xi}{\vert\xi\vert}$, then each connect leaf of $\mathcal{F}(\xi)$ is  totally umbilical and has mean curvature $H=n\frac{\varphi}{\vert\xi\vert}$.
\end{proposition}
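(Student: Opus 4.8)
The plan is to reduce the statement to a pointwise identity extracted directly from the conformal Killing equation \eqref{conformalvec}. Fix a connected leaf $L$ of $\mathcal{F}(\xi)$, a point $p\in L$, and two vectors $X,Y\in T_pL$, which I extend to vector fields tangent to $L$ near $p$ (this is legitimate precisely because $\mathcal{D}$ is integrable). Since $L$ is by hypothesis everywhere orthogonal to $\xi$ and $\xi$ has no zero on $M'$, along $L$ we may write $\xi=\vert\xi\vert\,\mathcal{N}$ with $\mathcal{N}=\xi/\vert\xi\vert$ the chosen unit normal of $L$. Denote by $h(\,\cdot\,,\,\cdot\,)=\bar{g}(\overline{\nabla}_{\,\cdot\,}\mathcal{N},\,\cdot\,)$ the second fundamental form of $L$ with respect to $\mathcal{N}$ and by $H$ its trace.

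The key computation is the following. Differentiating $\xi=\vert\xi\vert\,\mathcal{N}$ along $X$ gives
\[
\overline{\nabla}_X\xi=X(\vert\xi\vert)\,\mathcal{N}+\vert\xi\vert\,\overline{\nabla}_X\mathcal{N},
\]
and pairing with $Y$, which is tangent to $L$ and hence orthogonal to $\mathcal{N}$, the first term drops, so that $\bar{g}(\overline{\nabla}_X\xi,Y)=\vert\xi\vert\,h(X,Y)$. Symmetrizing in $X,Y$ and substituting into \eqref{conformalvec} yields $\vert\xi\vert\bigl(h(X,Y)+h(Y,X)\bigr)=2\varphi\,\bar{g}(X,Y)$; since $h$ is symmetric and $\vert\xi\vert>0$ on $M'$, this is the umbilicity relation $h(X,Y)=\tfrac{\varphi}{\vert\xi\vert}\,\bar{g}(X,Y)$ for all $X,Y$ tangent to $L$. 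Hence every leaf is totally umbilical, and tracing over a $\bar{g}$-orthonormal basis of the $n$-dimensional space $T_pL$ gives $H=n\varphi/\vert\xi\vert$.

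There is no genuine obstacle here: once one knows $\xi$ is normal to the leaves — which is exactly what the integrability hypothesis on $\mathcal{D}$ provides — the whole statement is a one-line consequence of \eqref{conformalvec}. The only points needing a little care are bookkeeping: (a) verifying that the $\mathcal{N}$-component of $\overline{\nabla}_X\xi$ is indeed annihilated upon pairing with a tangent vector $Y$, so that $\bar{g}(\overline{\nabla}_X\xi,Y)$ equals $\vert\xi\vert\,h(X,Y)$ on the nose; and (b) fixing the sign convention for $h$ and $H$ so that with the orientation $\mathcal{N}=\xi/\vert\xi\vert$ one obtains $H=n\varphi/\vert\xi\vert$ rather than its negative. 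I would also remark that the compactness of the leaves and the ``Riemannian foliation'' property are not used in this argument — they are recorded for later purposes — so the umbilicity and the value of $H$ hold in fact for any integral hypersurface of $\mathcal{D}$.
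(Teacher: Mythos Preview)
Your proof is correct and follows essentially the same route as the paper's: both compute $\bar g(\overline\nabla_X\xi,Y)$ for $X,Y$ tangent to a leaf, invoke the symmetry of the second fundamental form, and feed this into the conformal Killing identity \eqref{conformalvec} to conclude $h=\tfrac{\varphi}{\vert\xi\vert}\,g$. Your extra remarks on sign conventions and on the irrelevance of compactness for this particular step are accurate and do not depart from the paper's argument.
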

\begin{proof}
	First, the unitary vector field $\mathcal{N}=\frac{\xi}{\vert\xi\vert}$ is well defined on the open set $M'$.  Assume that the closed hypersurface $P$ is a connect leaf of the foliation $\mathcal{F}(\xi)$ on $M'$, and $P$ is oriented by $\mathcal{N}$. Around any point $p\in P$, we can choose a local normal coordinate $\{\tilde{e}_i\}_{i=1}^n$ of $P$, then $\{\tilde{e}_i,\mathcal{N}\}$ spans the tangent vector space $TM$ around $p$. It's easy to compute the second fundamental form of $P$ around the point $p$,
	\begin{align*}
		h^{P}_{ij}=\bar{g}(\overline{\nabla}_{\tilde{e}_i}\mathcal{N},\tilde{e}_j)=\frac{1}{\vert\xi\vert}\bar{g}(\overline{\nabla}_{\tilde{e}_i}\xi,\tilde{e}_j).
	\end{align*}
	The symmetric property of the second fundamental form  immediately leads to
	\begin{align*}
		\bar{g}(\overline{\nabla}_{\tilde{e}_i}\xi,\tilde{e}_j)=\bar{g}(\overline{\nabla}_{\tilde{e}_j}\xi,\tilde{e}_i),
	\end{align*}
	combining this with (\ref{conformalvec}) we have 
	\begin{align}\label{secconformal-eq1}
		\bar{g}(\overline{\nabla}_{\tilde{e}_i}\xi,\tilde{e}_j)=\bar{g}(\overline{\nabla}_{\tilde{e}_j}\xi,\tilde{e}_i)=\varphi\bar{g}_{ij}.
	\end{align}
	Hence, 
	\begin{align}\label{secconformal-eq3}
		h^{P}_{ij}=\frac{\varphi}{\vert\xi\vert}\bar{g}_{ij},
	\end{align}
	which means $P$ is  totally umbilical. Moreover, $P$ has mean curvature 
	\begin{align*}
		H^{P}=n\frac{\varphi}{\vert\xi\vert}.
	\end{align*}
\end{proof}
Next, we assume further that any connect leaf of foliation $\mathcal{F}(\xi)$ has constant mean curvature, then we  can derive some identities in the following proposition. 
 \begin{proposition}\label{conformal vec prop1}
 	Assume that any connect leaf of the totally umbilical Riemannian foliation $\mathcal{F}(\xi)$ on $M'$ is a closed hypersurface with  constant mean curvature. Then  the following formulas hold for any vector field $X\in TM$ on $M'$:
 	\begin{align}
 		\overline{\nabla}\frac{\vert\xi\vert^2}{\varphi^2}=& 2\frac{\varphi^2-\xi(\varphi)}{\varphi^3}\xi  ,\label{secconformal-eq6}\\
 			\overline{\nabla}\vert\xi\vert^2=& 2\frac{\varphi^2-\xi(\varphi)}{\varphi}\xi+2\frac{\vert\xi\vert^2}{\varphi}\overline{\nabla}\varphi,     \label{gradofxi-1}\\
 		\overline{\nabla}_X\xi=& \varphi X+\frac{X(\varphi)}{\varphi}\xi-\frac{\langle X,\xi\rangle}{\varphi}\overline{\nabla}\varphi.\label{1stder} 	
 	\end{align}
 Moreover, we have 
 \begin{align}
 	\overline{\nabla}_X\frac{\xi}{\varphi}=&X-\frac{\langle X,\xi\rangle}{\varphi^2}\overline{\nabla}\varphi  , \label{secconformal-eq5}\\
 	\vert\xi\vert^2\overline{\nabla}\frac{\varphi^2-\xi(\varphi)}{\varphi}=&\xi(\frac{\varphi^2-\xi(\varphi)}{\varphi})\xi     .  \label{secconformal-eq7}
 \end{align}
 \end{proposition}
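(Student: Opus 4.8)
The plan is to derive everything from the single geometric input of Proposition~\ref{umbilical}, namely that each leaf of $\mathcal{F}(\xi)$ is totally umbilical with $h^P_{ij}=\tfrac{\varphi}{|\xi|}\bar g_{ij}$, together with the constancy of the mean curvature $n\tfrac{\varphi}{|\xi|}$ along each leaf. The constancy of $\tfrac{\varphi}{|\xi|}$ on leaves says that $\overline\nabla\tfrac{\varphi}{|\xi|}$ (equivalently $\overline\nabla\tfrac{|\xi|^2}{\varphi^2}$) is pointwise proportional to $\xi$, since $\xi$ spans the normal line to the distribution $\mathcal{D}$. So the qualitative shape of \eqref{secconformal-eq6} is immediate; the real content is computing the proportionality factor. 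For that I would differentiate $|\xi|^2$ and $\varphi$ in the $\xi$-direction: from \eqref{conformalvec} with $X=Y=\xi$ one gets $\bar g(\overline\nabla_\xi\xi,\xi)=\varphi|\xi|^2$, i.e. $\xi(|\xi|^2)=2\varphi|\xi|^2$; and $\xi(\varphi)$ is just $\langle\overline\nabla\varphi,\xi\rangle$. Writing $\overline\nabla\tfrac{|\xi|^2}{\varphi^2}=\lambda\,\xi$ and taking the inner product with $\xi$, the left side is $\tfrac1{|\xi|^2}\cdot|\xi|^2\cdot\xi\!\left(\tfrac{|\xi|^2}{\varphi^2}\right)$-style bookkeeping — concretely $\langle\overline\nabla\tfrac{|\xi|^2}{\varphi^2},\xi\rangle=\xi\!\left(\tfrac{|\xi|^2}{\varphi^2}\right)=\tfrac{\varphi^2\,\xi(|\xi|^2)-|\xi|^2\,\xi(\varphi^2)}{\varphi^4}=\tfrac{2\varphi^2|\xi|^2\varphi-2|\xi|^2\varphi\,\xi(\varphi)}{\varphi^4}=\tfrac{2|\xi|^2(\varphi^2-\xi(\varphi))}{\varphi^3}$, while $\langle\lambda\xi,\xi\rangle=\lambda|\xi|^2$; hence $\lambda=\tfrac{2(\varphi^2-\xi(\varphi))}{\varphi^3}$, which is \eqref{secconformal-eq6}.

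Next I would get \eqref{1stder}. The idea is that $\overline\nabla_X\xi$, viewed as a $(1,1)$-tensor $A(X):=\overline\nabla_X\xi$, has symmetric part $\varphi\,\mathrm{Id}$ by \eqref{conformalvec}, so it suffices to identify the skew part. I would test $A$ against the decomposition $TM=\mathcal{D}\oplus\mathbb{R}\mathcal{N}$. For $X,Y$ both tangent to a leaf, \eqref{secconformal-eq1} already gives $\langle\overline\nabla_X\xi,Y\rangle=\varphi\langle X,Y\rangle$. For $X$ tangent to a leaf and $Y=\xi$: $\langle\overline\nabla_X\xi,\xi\rangle=\tfrac12 X(|\xi|^2)$, and I can compute $X(|\xi|^2)$ for such $X$ from the fact that $\tfrac{|\xi|^2}{\varphi^2}$ is constant on the leaf, giving $X(|\xi|^2)=\tfrac{|\xi|^2}{\varphi^2}X(\varphi^2)=2\tfrac{|\xi|^2}{\varphi}X(\varphi)$, i.e. $\langle\overline\nabla_X\xi,\xi\rangle=\tfrac{|\xi|^2}{\varphi}X(\varphi)=\tfrac{\langle X,\xi\rangle}{\varphi}\cdots$ — wait, $\langle X,\xi\rangle=0$ here, so this term must be reconciled using the $\xi$-component carefully; the cleanest route is to posit the ansatz $\overline\nabla_X\xi=\varphi X+\alpha\langle X,\xi\rangle\xi+\beta X(\varphi)\,(\text{something})$ and fix the coefficients by pairing with $\xi$ and with leaf-tangent vectors, then check symmetry of $X\mapsto\langle\overline\nabla_X\xi,Y\rangle$ against $Y\mapsto\langle\overline\nabla_Y\xi,X\rangle$ using \eqref{conformalvec}. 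Plugging $X=\xi$ into the resulting formula must reproduce $\overline\nabla_\xi\xi=\varphi\xi+\tfrac{\xi(\varphi)}{\varphi}\xi-\tfrac{|\xi|^2}{\varphi}\overline\nabla\varphi$, which I can independently verify since $\overline\nabla_\xi\xi=\tfrac12\overline\nabla|\xi|^2$ on the line spanned by $\xi$ plus the leaf-direction part — this consistency check is what pins down the last coefficient and also yields \eqref{gradofxi-1} via $\overline\nabla|\xi|^2=2\overline\nabla_{(\cdot)}\xi$ paired appropriately, or more directly by combining \eqref{secconformal-eq6} with the product rule on $|\xi|^2=\tfrac{|\xi|^2}{\varphi^2}\cdot\varphi^2$.

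The remaining identities should then be bookkeeping. Formula \eqref{secconformal-eq5} follows by applying the Leibniz rule $\overline\nabla_X\tfrac{\xi}{\varphi}=\tfrac1\varphi\overline\nabla_X\xi-\tfrac{X(\varphi)}{\varphi^2}\xi$ and substituting \eqref{1stder}: the $\tfrac{X(\varphi)}{\varphi^2}\xi$ terms cancel and one is left with $X-\tfrac{\langle X,\xi\rangle}{\varphi^2}\overline\nabla\varphi$. Formula \eqref{secconformal-eq7} says $\overline\nabla\tfrac{\varphi^2-\xi(\varphi)}{\varphi}$ is proportional to $\xi$ with the factor $\tfrac1{|\xi|^2}\xi\!\left(\tfrac{\varphi^2-\xi(\varphi)}{\varphi}\right)$, i.e. that this function is again constant on the leaves of $\mathcal{F}(\xi)$; from \eqref{secconformal-eq6} we know $\tfrac{\varphi^2-\xi(\varphi)}{\varphi^3}$ is the conformal factor relating $\overline\nabla\tfrac{|\xi|^2}{\varphi^2}$ to $\xi$, and I would show $\tfrac{\varphi^2-\xi(\varphi)}{\varphi}=\tfrac{|\xi|^2}{\varphi^2}\cdot\tfrac{\varphi^2-\xi(\varphi)}{\varphi^3}\cdot\varphi$ — hmm, rather: the slick argument is that since both $\tfrac{|\xi|^2}{\varphi^2}$ and (to be shown) $\tfrac{\varphi^2-\xi(\varphi)}{\varphi}$ are functions of the leaf parameter, one differentiates the relation implied by \eqref{secconformal-eq6} along a leaf-tangent $X$: apply such an $X$ to \eqref{secconformal-eq6}, use that $X(\text{LHS})$ involves the Hessian of $\tfrac{|\xi|^2}{\varphi^2}$ which is symmetric, and that $X\xi$ has no component forcing extra leaf-tangent terms by \eqref{1stder}; matching components shows $X\!\left(\tfrac{\varphi^2-\xi(\varphi)}{\varphi^3}\right)$ times $\langle\xi,\text{leaf}\rangle$ vanishes appropriately, leaving $\overline\nabla\tfrac{\varphi^2-\xi(\varphi)}{\varphi}\parallel\xi$.

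The main obstacle I anticipate is the correct derivation of \eqref{1stder}: getting the three coefficients right requires carefully exploiting both the symmetry constraint from \eqref{conformalvec} and the leaf-constancy hypothesis simultaneously, and it is easy to drop or misplace a $\langle X,\xi\rangle/\varphi$ factor. Everything downstream is algebraic manipulation with the product and quotient rules, so once \eqref{secconformal-eq6} and \eqref{1stder} are secured, \eqref{gradofxi-1}, \eqref{secconformal-eq5} and \eqref{secconformal-eq7} fall out quickly.
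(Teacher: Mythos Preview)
Your overall strategy matches the paper's: \eqref{secconformal-eq6} from leaf-constancy plus evaluation in the $\xi$-direction, \eqref{gradofxi-1} from \eqref{secconformal-eq6} via the product rule on $|\xi|^2=\tfrac{|\xi|^2}{\varphi^2}\cdot\varphi^2$, \eqref{secconformal-eq5} from \eqref{1stder} by Leibniz, and \eqref{secconformal-eq7} from the symmetry of the Hessian of $\tfrac{|\xi|^2}{\varphi^2}$ combined with \eqref{secconformal-eq5}. These parts are fine.

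The one place you visibly flounder is \eqref{1stder}, and the fix is exactly the reordering the paper uses: derive \eqref{gradofxi-1} \emph{before} \eqref{1stder}, not after. Once you have \eqref{gradofxi-1} you get, for \emph{every} $X$,
\[
\langle\overline\nabla_X\xi,\xi\rangle=\tfrac12 X(|\xi|^2)=\tfrac{\varphi^2-\xi(\varphi)}{\varphi}\langle X,\xi\rangle+\tfrac{|\xi|^2}{\varphi}X(\varphi),
\]
which is the $\xi$-component of $\overline\nabla_X\xi$ in closed form. For the component orthogonal to $\xi$, take any $Z\perp\xi$ and split $X=\tfrac{\langle X,\xi\rangle}{|\xi|^2}\xi+X^{\perp}$: on $X^{\perp}$ you already know $\langle\overline\nabla_{X^{\perp}}\xi,Z\rangle=\varphi\langle X^{\perp},Z\rangle$ from \eqref{secconformal-eq1}, and on the $\xi$-part you use \eqref{conformalvec} with $Z\perp\xi$ to write $\langle\overline\nabla_\xi\xi,Z\rangle=-\langle\overline\nabla_Z\xi,\xi\rangle$, which the displayed formula above evaluates to $-\tfrac{|\xi|^2}{\varphi}Z(\varphi)$. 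Assembling the two components gives \eqref{1stder} directly---no ansatz, no coefficient-matching. Your ``wait, $\langle X,\xi\rangle=0$ here'' confusion disappears because you never restrict to leaf-tangent $X$ in the $\xi$-component step.
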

\begin{proof}
	Since each connect leaf of $\mathcal{F}(\xi)$ has constant mean curvature $n\frac{\varphi}{\vert\xi\vert}$, this implies that 
	\begin{align*}
		\overline{\nabla}\frac{\vert\xi\vert^2}{\varphi^2}=\frac{1}{\vert\xi\vert^2}\xi(\frac{\vert\xi\vert^2}{\varphi^2})\xi=2\frac{\varphi^2-\xi(\varphi)}{\varphi^3}\xi,
	\end{align*}
	where in the second equality we used (\ref{conformalvec}). Then, (\ref{gradofxi-1}) follows from (\ref{secconformal-eq6}).
	
	Next, using (\ref{gradofxi-1}), for any vector field $X\in TM$ we have
	\begin{align}\label{secconformal-eq2}
		\langle\overline{\nabla}_X\xi,\xi\rangle=\frac{\varphi^2-\xi(\varphi)}{\varphi}\langle\xi,X\rangle+\frac{\vert\xi\vert^2}{\varphi}X(\varphi).
	\end{align}
   Then, for any vector field $Z\in TM$ which is perpendicular to $\xi$, we can calculate
	\begin{align}\label{secconformal-eq10}
		\langle\overline{\nabla}_X\xi,Z\rangle=&\frac{\langle\xi,X\rangle}{\vert\xi\vert^2}\langle\overline{\nabla}_{\xi}\xi,Z\rangle+\langle\overline{\nabla}_{X-\frac{\langle\xi,X\rangle}{\vert\xi\vert^2}\xi}\xi,Z\rangle\nonumber\\
		=&-\frac{\langle\xi,X\rangle}{\vert\xi\vert^2}\langle\overline{\nabla}_{Z}\xi,\xi\rangle+\varphi\langle X,Z\rangle\nonumber\\
		=&\varphi\langle X,Z\rangle-\frac{\langle\xi,X\rangle}{\varphi}Z(\varphi),
	\end{align}
where in the second equality we used (\ref{conformalvec}) and (\ref{secconformal-eq1}), in the last equality we used (\ref{secconformal-eq2}).
Combining  (\ref{secconformal-eq2})  with  (\ref{secconformal-eq10}), we immediately obtain (\ref{1stder}) and (\ref{secconformal-eq5}) as a conclusion.

Finally, combining (\ref{secconformal-eq5}) with (\ref{secconformal-eq6}), for any vector fields $X,Y\in TM$ we have 
\begin{align*}
	\langle\overline{\nabla}_{X}\overline{\nabla}\frac{\vert\xi\vert^2}{\varphi^2},Y\rangle=&	2\langle\overline{\nabla}_{X}\left(\frac{\varphi^2-\xi(\varphi)}{\varphi^3}\xi\right),Y\rangle\\
	=&\frac{2}{\varphi^2}X(\frac{\varphi^2-\xi(\varphi)}{\varphi})\langle\xi,Y\rangle-2\frac{\varphi^2-\xi(\varphi)}{\varphi^4}X(\varphi)\langle\xi,Y\rangle\\
	&+2\frac{\varphi^2-\xi(\varphi)}{\varphi^2}\langle X,Y\rangle-2\frac{\varphi^2-\xi(\varphi)}{\varphi^4}Y(\varphi)\langle\xi,X\rangle.
\end{align*}
Then, by symmetric property of the Hessian operator, we see that
\begin{align*}
	0=&\langle\overline{\nabla}_{X}\overline{\nabla}\frac{\vert\xi\vert^2}{\varphi^2},Y\rangle-\langle\overline{\nabla}_{Y}\overline{\nabla}\frac{\vert\xi\vert^2}{\varphi^2},X\rangle\\
	=&\frac{2}{\varphi^2}X(\frac{\varphi^2-\xi(\varphi)}{\varphi})\langle\xi,Y\rangle-\frac{2}{\varphi^2}Y(\frac{\varphi^2-\xi(\varphi)}{\varphi})\langle\xi,X\rangle.
\end{align*}
Hence,
\begin{align*}
	X(\frac{\varphi^2-\xi(\varphi)}{\varphi})\langle\xi,Y\rangle=Y(\frac{\varphi^2-\xi(\varphi)}{\varphi})\langle\xi,X\rangle,
\end{align*}
by choosing $Y=\xi$ we get (\ref{secconformal-eq7}).
\end{proof}
\subsection{Minkowski identities}
Assume that $\Sigma$ is a hypersurface embedded in $M'$, we want to derive some Minkowski identities on $\Sigma$  which will play crucial roles in the definition of the flow (\ref{flow-0}). 

First, we need to compute the sectional curvatures and Ricci curvatures of the direction $\xi$. 
\begin{proposition}\label{conformal vec prop2}
	Under the same conditions as that in Proposition \ref{conformal vec prop1}, for any vector field $X\in TM$, on $M'$ we have
	\begin{align}\label{curvature-eq1}
		\bar{R}(X,\xi,X,\xi)
		=&-\frac{\vert\xi\vert^2}{\varphi}\langle\overline{\nabla}_{X}\overline{\nabla}\varphi,X\rangle+\frac{\langle\xi,X\rangle^2}{\varphi}\langle\overline{\nabla}_{\mathcal{N}}\overline{\nabla}\varphi,\mathcal{N}\rangle
	\end{align}
	and
	\begin{align}\label{curvature-eq2}
		\overline{Ric}(\xi,X)=-\frac{\langle\xi,X\rangle}{\varphi}(\overline{\Delta}\varphi-\langle\overline{\nabla}_{\mathcal{N}}\overline{\nabla}\varphi,\mathcal{N}\rangle),
	\end{align}
where $\overline{\Delta}$ is the Laplace operator with respect to the metric $\bar{g}$ on $M$.

	Moreover, $\forall v\bot\xi$ we have 
	\begin{align}\label{curvature-eq3}
		\overline{Ric}(\xi,v)=0\ \text{and}\ \langle\overline{\nabla}_v\overline{\nabla}\varphi,\xi\rangle=0.
	\end{align}
\end{proposition}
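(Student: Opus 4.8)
The plan is to run the classical Bochner-type computation for the conformal vector field $\xi$, feeding in the first-order formula (\ref{1stder}) and the ``constant mean curvature'' consequence (\ref{secconformal-eq7}) from Proposition \ref{conformal vec prop1}. The first step is the pointwise identity $\langle\overline{\nabla}_v\overline{\nabla}\varphi,\xi\rangle=0$ for $v\perp\xi$, which is the second assertion of (\ref{curvature-eq3}) and will be reused throughout. Set $g:=\frac{\varphi^2-\xi(\varphi)}{\varphi}$, so $\xi(\varphi)=\varphi^2-g\varphi$; by (\ref{secconformal-eq7}) the gradient $\overline{\nabla}g$ is parallel to $\xi$, hence $v(g)=0$. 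Since $\langle v,\xi\rangle=0$, (\ref{1stder}) gives $\overline{\nabla}_v\xi=\varphi v+\frac{v(\varphi)}{\varphi}\xi$, so
\[
\langle\overline{\nabla}_v\overline{\nabla}\varphi,\xi\rangle=v\langle\overline{\nabla}\varphi,\xi\rangle-\langle\overline{\nabla}\varphi,\overline{\nabla}_v\xi\rangle=v(\xi(\varphi))-\varphi\,v(\varphi)-\frac{v(\varphi)\xi(\varphi)}{\varphi},
\]
and differentiating $\xi(\varphi)=\varphi^2-g\varphi$ in the direction $v$ (using $v(g)=0$) yields $v(\xi(\varphi))=\varphi\,v(\varphi)+\frac{v(\varphi)\xi(\varphi)}{\varphi}$, so all terms cancel.

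Next I would compute the curvature operator along $\xi$. Writing $\eta:=\overline{\nabla}\log\varphi$, formula (\ref{1stder}) reads $\overline{\nabla}_X\xi=\varphi X+A(X)$ with $A(X)=\langle X,\eta\rangle\xi-\langle X,\xi\rangle\eta$ a skew-symmetric endomorphism. Applying the Ricci identity $\bar{R}(X,Y)\xi=(\overline{\nabla}_X B)(Y)-(\overline{\nabla}_Y B)(X)$ for $B=\overline{\nabla}\xi$, expanding $(\overline{\nabla}_XB)(Y)=X(\varphi)Y+(\overline{\nabla}_XA)(Y)$, substituting (\ref{1stder}) into $(\overline{\nabla}_XA)(Y)$, and using the symmetry of the Hessian of $\log\varphi$, all the terms proportional to $\varphi$ cancel and one gets
\[
\bar{R}(X,Y)\xi=\langle X,\xi\rangle\overline{\nabla}_Y\eta-\langle Y,\xi\rangle\overline{\nabla}_X\eta+\bigl(\langle X,\xi\rangle\langle Y,\eta\rangle-\langle Y,\xi\rangle\langle X,\eta\rangle\bigr)\eta.
\]
Setting $Y=\xi$, pairing with $X$ and rewriting $\eta=\varphi^{-1}\overline{\nabla}\varphi$, the terms not involving a second covariant derivative of $\varphi$ cancel, leaving
\[
\bar{R}(X,\xi,X,\xi)=-\frac{|\xi|^2}{\varphi}\langle\overline{\nabla}_X\overline{\nabla}\varphi,X\rangle+\frac{\langle X,\xi\rangle}{\varphi}\langle\overline{\nabla}_\xi\overline{\nabla}\varphi,X\rangle.
\]
Decomposing $X=\langle X,\mathcal{N}\rangle\mathcal{N}+X^\perp$ with $X^\perp\perp\xi$, Step~1 together with the symmetry of $\overline{\nabla}\overline{\nabla}\varphi$ gives $\langle\overline{\nabla}_\xi\overline{\nabla}\varphi,X^\perp\rangle=\langle\overline{\nabla}_{X^\perp}\overline{\nabla}\varphi,\xi\rangle=0$, so $\langle\overline{\nabla}_\xi\overline{\nabla}\varphi,X\rangle=|\xi|\langle X,\mathcal{N}\rangle\langle\overline{\nabla}_{\mathcal N}\overline{\nabla}\varphi,\mathcal N\rangle$, which is exactly (\ref{curvature-eq1}).

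Finally I would pass to the Ricci curvatures. Tracing (\ref{curvature-eq1}) over an orthonormal frame, with $\sum_i\langle e_i,\xi\rangle^2=|\xi|^2$, gives $\overline{Ric}(\xi,\xi)=-\frac{|\xi|^2}{\varphi}\bigl(\overline{\Delta}\varphi-\langle\overline{\nabla}_{\mathcal N}\overline{\nabla}\varphi,\mathcal N\rangle\bigr)$. Tracing the displayed formula for $\bar{R}(X,Y)\xi$ in its first slot gives $\overline{Ric}(Y,\xi)=-\langle Y,\xi\rangle\overline{\Delta}\log\varphi+\langle\overline{\nabla}_Y\eta,\xi\rangle+\langle Y,\eta\rangle\langle\xi,\eta\rangle-\langle Y,\xi\rangle|\eta|^2$; for $Y=v\perp\xi$ only $\langle\overline{\nabla}_v\eta,\xi\rangle+\langle v,\eta\rangle\langle\xi,\eta\rangle$ survives, and this vanishes by Step~1 after rewriting $\eta$ in terms of $\overline{\nabla}\varphi$, giving $\overline{Ric}(\xi,v)=0$, the first part of (\ref{curvature-eq3}). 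Then for arbitrary $X$, writing $X=\frac{\langle X,\xi\rangle}{|\xi|^2}\xi+X^\perp$ with $X^\perp\perp\xi$ and using $\overline{Ric}(\xi,X^\perp)=0$, we obtain $\overline{Ric}(\xi,X)=\frac{\langle X,\xi\rangle}{|\xi|^2}\overline{Ric}(\xi,\xi)$, which combined with the formula for $\overline{Ric}(\xi,\xi)$ is precisely (\ref{curvature-eq2}).

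The main obstacle is the bookkeeping in the second step: several families of terms involving $\eta=\overline{\nabla}\log\varphi$ and $\overline{\nabla}\eta$ must be tracked through the skew-symmetrization, and one has to invoke the symmetry of $\overline{\nabla}\overline{\nabla}\log\varphi$ at exactly the right places for everything except the stated expression to cancel. Once the clean formula for $\bar{R}(X,Y)\xi$ is established, Steps~1 and~3 are short.
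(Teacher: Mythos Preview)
Your proof is correct and complete, but it follows a different route from the paper's. The paper computes the curvature of $\xi/\varphi$ rather than $\xi$: since $\overline{\nabla}_X(\xi/\varphi)=X-\frac{\langle X,\xi\rangle}{\varphi^2}\overline{\nabla}\varphi$ by (\ref{secconformal-eq5}), the Ricci-identity expansion of $\bar{R}(\cdot,\cdot)\xi/\varphi$ is essentially a two-line computation with no $\eta$-bookkeeping, arriving immediately at the analogue of your displayed formula for $\bar R(X,Y)\xi$. More interestingly, the paper reverses the logical order of (\ref{curvature-eq3}): it first proves $\overline{Ric}(\mathcal N,v)=0$ for $v\perp\xi$ \emph{geometrically}, by applying the Codazzi equation on the totally umbilical CMC leaf through the point, and only then deduces $\langle\overline{\nabla}_v\overline{\nabla}\varphi,\xi\rangle=0$ by substituting back into the traced curvature formula. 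Your approach instead derives the Hessian vanishing directly from (\ref{secconformal-eq7}) and then obtains $\overline{Ric}(\xi,v)=0$ by tracing. Both are valid; the paper's use of $\xi/\varphi$ and Codazzi sidesteps exactly the ``main obstacle'' you flag (the skew-symmetrization of the $A$-terms), while your route is more self-contained in that it never leaves the ambient manifold to invoke hypersurface equations.
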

\begin{proof}
	Around any point $p\in M'$, choosing a local normal coordinate $\{\bar{e}_i\}_{i=1}^{i=n+1}$ of $TM$, by the definition of curvature tensor, using (\ref{secconformal-eq5}) we have
	\begin{align*}
		\langle\bar{R}(\bar{e}_i,\bar{e}_k)\frac{\xi}{\varphi},\bar{e}_{\ell}\rangle
		=&\langle\overline{\nabla}_{\bar{e}_i}\overline{\nabla}_{\bar{e}_k}\frac{\xi}{\varphi}-\overline{\nabla}_{\bar{e}_k}\overline{\nabla}_{\bar{e}_i}\frac{\xi}{\varphi},\bar{e}_{\ell}\rangle\\
		=&\langle\overline{\nabla}_{\bar{e}_i}\left(\bar{e}_k-\frac{\langle\xi,\bar{e}_k\rangle}{\varphi^2}\overline{\nabla}\varphi\right),\bar{e}_{\ell}\rangle-\langle\overline{\nabla}_{\bar{e}_k}\left(\bar{e}_i-\frac{\langle\xi,\bar{e}_i\rangle}{\varphi^2}\overline{\nabla}\varphi\right),\bar{e}_{\ell}\rangle\\
		=&\left(\bar{e}_k\left(\frac{\langle\xi,\bar{e}_i\rangle}{\varphi^2}\right)-\bar{e}_i\left(\frac{\langle\xi,\bar{e}_k\rangle}{\varphi^2}\right)\right)\bar{e}_{\ell}(\varphi)-\frac{\langle\xi,\bar{e}_k\rangle}{\varphi^2}\langle\overline{\nabla}_{\bar{e}_i}\overline{\nabla}\varphi,\bar{e}_{\ell}\rangle\\
  &+\frac{\langle\xi,\bar{e}_i\rangle}{\varphi^2}\langle\overline{\nabla}_{\bar{e}_k}\overline{\nabla}\varphi,\bar{e}_{\ell}\rangle\\
		=&\frac{\langle\overline{\nabla}_{\bar{e}_k}\xi,\bar{e}_i\rangle-\langle\overline{\nabla}_{\bar{e}_i}\xi,\bar{e}_k\rangle}{\varphi^2}\bar{e}_{\ell}(\varphi)+2\frac{\langle\xi,\bar{e}_k\rangle}{\varphi^3}\bar{e}_i(\varphi)\bar{e}_{\ell}(\varphi)\\
		&-2\frac{\langle\xi,\bar{e}_i\rangle}{\varphi^3}\bar{e}_{k}(\varphi)\bar{e}_{\ell}(\varphi)-\frac{\langle\xi,\bar{e}_k\rangle}{\varphi^2}\langle\overline{\nabla}_{\bar{e}_i}\overline{\nabla}\varphi,\bar{e}_{\ell}\rangle\\
  &+\frac{\langle\xi,\bar{e}_i\rangle}{\varphi^2}\langle\overline{\nabla}_{\bar{e}_k}\overline{\nabla}\varphi,\bar{e}_{\ell}\rangle.
	\end{align*}
By (\ref{1stder}), 
\begin{align*}
	\langle\overline{\nabla}_{\bar{e}_k}\xi,\bar{e}_i\rangle-\langle\overline{\nabla}_{\bar{e}_i}\xi,\bar{e}_k\rangle=2\frac{\langle\xi,\bar{e}_i\rangle}{\varphi}\bar{e}_k(\varphi)-2\frac{\langle\xi,\bar{e}_k\rangle}{\varphi}\bar{e}_i(\varphi).
\end{align*}
Hence,
\begin{align}\label{eq-11}
		\langle\bar{R}(\bar{e}_i,\bar{e}_k)\frac{\xi}{\varphi},\bar{e}_{\ell}\rangle
	=&-\frac{\langle\xi,\bar{e}_k\rangle}{\varphi^2}\langle\overline{\nabla}_{\bar{e}_i}\overline{\nabla}\varphi,\bar{e}_{\ell}\rangle+\frac{\langle\xi,\bar{e}_i\rangle}{\varphi^2}\langle\overline{\nabla}_{\bar{e}_k}\overline{\nabla}\varphi,\bar{e}_{\ell}\rangle.
\end{align}
	Then,
	\begin{align}\label{eq-10}
		\overline{Ric}(\xi,\cdot)=&\bar{g}^{i\ell}\langle\bar{R}(\bar{e}_i,\cdot)\xi,\bar{e}_{\ell}\rangle=-\frac{\overline{\Delta}\varphi}{\varphi}\langle\xi,\cdot\rangle+\frac{1}{\varphi}\langle\overline{\nabla}_{\cdot}\overline{\nabla}\varphi,\xi\rangle.
	\end{align}
On the other hand, assume that $P$ is the connect leaf of the foliation $\mathcal{F}(\xi)$ such that $p\in P$. Taking a local normal coordinate $\{\tilde{e}_i\}_{i=1}^{n}$ of $P$, since $P$ is totally umbilical and has constant mean curvature, by Codazzi equation we have
\begin{align*}
	\overline{R}(\mathcal{N},\tilde{e}_k, \tilde{e}_i,\tilde{e}_j)=h^{P}_{ki,j}-h^{P}_{kj,i}=0
\end{align*}
Therefore,
\begin{align*}
    \overline{Ric}(\mathcal{N}, \tilde{e}_i)=0,\ \forall i=1,\cdots,n.
\end{align*}
Substituting it into (\ref{eq-10}), we have
	\begin{align*}
		\langle\overline{\nabla}_{\tilde{e}_i}\overline{\nabla}\varphi,\xi\rangle=0,\ \forall i=1,\cdots,n,
	\end{align*}
this is (\ref{curvature-eq3}).
Now, (\ref{curvature-eq2}) follows from (\ref{curvature-eq3}) and (\ref{eq-10}) immediately.	
In addition, we get (\ref{curvature-eq1}) by plugging (\ref{curvature-eq3}) into (\ref{eq-11}).
\end{proof}

Next, we will prove Minkowski identities on a closed hypersurface $\Sigma\subset M'$. Using local frames $\{e_i\}_{i=1}^n$ on $\Sigma$, denote by $g_{ij}$, $h_{ij}$ and $h^i_j=g^{ik}h_{kj}$   the induced metric, the second fundamental form and the Weingarten tensor respectively for $i,j=1,\cdots,n$. We also let $\sigma_{\ell}$ denote the $\ell$-th elementary symmetric functions of the principle curvatures of $\Sigma$. First, we recall the following formula which holds in any Riemannian manifold (see \cite{Guan-Li-Wang2019} for the detail).
\begin{lemma}
	Let $\sigma_2^{ij}=Hg^{ij}-h^{ij}$ be the cofactor tensor. Then the trace of its covariant derivative is 
	\begin{align}\label{sigmadiv}
		\sigma_2^{ij}(h)_j=-\overline{Ric}(e_i,\nu),
	\end{align}
	where $\nu$ is the unit outward normal of the hypersurface.
\end{lemma}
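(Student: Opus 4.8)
The final statement is the classical identity expressing that the divergence of the cofactor (first Newton) tensor of a hypersurface produces only an ambient Ricci term. Since it is pointwise and tensorial and holds in an arbitrary Riemannian manifold, I would argue entirely in local coordinates, making no use of the conformal vector field. The plan is to fix $p\in\Sigma$, choose a local frame $\{e_i\}_{i=1}^{n}$ that is geodesic at $p$ for the induced metric $g$ (so $g_{ij}(p)=\delta_{ij}$ and the Christoffel symbols of $g$ vanish at $p$), write $h_{ij,k}$ for the covariant derivatives of $h$ with respect to the induced Levi--Civita connection $\nabla$, and expand the divergence directly from $\sigma_2^{ij}=Hg^{ij}-h^{ij}$ together with $H=g^{kl}h_{kl}$. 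At the point $p$ this gives
\[
\sigma_2^{ij}(h)_j \;=\; \nabla^i H-\nabla_j h^{ij}\;=\;\sum_{k}h_{kk,i}-\sum_{j}h_{ij,j}.
\]

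The next step is to eliminate the two first-order sums using the Codazzi equation $h_{ki,j}-h_{kj,i}=\bar R(\nu,e_k,e_i,e_j)$ for tangential indices, in the same form used in the proof of Proposition~\ref{conformal vec prop2} (with $\nu$ the unit outward normal in place of $\mathcal{N}$). Setting $j=k$ and summing over $k$ rewrites $\sum_{k}h_{kk,i}$ in terms of $\sum_{k}h_{ki,k}$ plus a curvature sum; substituting this back, the two divergence-type sums cancel (using the symmetry $h_{ki,k}=h_{ik,k}$), and one is left with a pure contraction of the ambient curvature tensor of the shape $\pm\sum_{k=1}^{n}\bar R(\nu,e_k,e_i,e_k)$.

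Finally I would recognize this contraction as an ambient Ricci curvature. Completing $\{e_1,\dots,e_n\}$ to an orthonormal basis $\{e_1,\dots,e_n,\nu\}$ of $T_pM$, the extra term $\bar R(\nu,\nu,e_i,\nu)$ vanishes by the skew-symmetry of $\bar R$ in its first two arguments, so the tangential sum over $k=1,\dots,n$ equals the full trace over $T_pM$, which is $\overline{Ric}(e_i,\nu)$ for the convention relating $\bar R$ and $\overline{Ric}$ fixed in Proposition~\ref{conformal vec prop2}; collecting signs yields $\sigma_2^{ij}(h)_j=-\overline{Ric}(e_i,\nu)$. The computation itself is entirely routine, and the step I would treat as the main obstacle is nothing conceptual but rather the consistent bookkeeping of sign conventions — the orientation of $\nu$ and the sign of $h_{ij}$, the sign appearing in the Codazzi equation, and the contraction convention defining $\overline{Ric}$ — so that the final answer carries the stated minus sign and not $+\overline{Ric}(e_i,\nu)$.
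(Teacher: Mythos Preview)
Your argument is correct and is exactly the standard derivation: expand $\nabla_j\sigma_2^{ij}=\nabla^iH-\nabla_jh^{ij}$, apply the Codazzi equation $h_{ki,j}-h_{kj,i}=\bar R(\nu,e_k,e_i,e_j)$ (the same form the paper uses in the proof of Proposition~\ref{conformal vec prop2}), contract, and identify the resulting curvature sum with $\overline{Ric}(e_i,\nu)$ after noting that the $\nu$-slot contributes nothing. The sign bookkeeping you flag as the only delicate point is indeed the only place to be careful, and in the paper's conventions it comes out with the stated minus.

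Note that the paper itself does not prove this lemma: it simply recalls the identity and refers to \cite{Guan-Li-Wang2019} for the computation. Your write-up therefore supplies what the paper omits, and is precisely the argument one finds in that reference (or in any standard treatment of the Newton tensors on hypersurfaces). There is nothing to compare approaches against; you have given the expected proof.
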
 
Then, we obtain the following Minkowski identities.
\begin{lemma}\label{lemma-minikowsi}
	Let $(M^{n+1},\bar{g})$ be a Riemannian manifold endowed with a complete conformal vector field $\xi$. Then on the hypersurface $\Sigma\subset M'$ we have,
	\begin{align}\label{minikowski0}
		\int_{\Sigma}n\varphi-uHd\mu=0,
	\end{align}
and 
	\begin{align}\label{minikowski}
		(n-1)\int_{\Sigma}\varphi\sigma_1d\mu=2\int_{\Sigma}\sigma_2ud\mu+\int_{\Sigma}\overline{Ric}(\xi,\nu)-u\overline{Ric}(\nu,\nu)d\mu,
	\end{align}
	where $\nu$ is the unit normal vector field of $\Sigma$ and $u=\langle\xi,\nu\rangle$ is the support function. 
	
	If we assume further that  any connect leaf of the foliation $\mathcal{F}(\xi)$ has constant mean curvature, then the second Minkowski identity (\ref{minikowski}) implies that 
 \begin{align}\label{minikowski2}
     (n-1)\int_{\Sigma}\varphi\sigma_1d\mu=2\int_{\Sigma}\sigma_2ud\mu+\int_{\Sigma}u\left(\overline{Ric}(\mathcal{N},\mathcal{N})-\overline{Ric}(\nu,\nu)\right)d\mu.
 \end{align}
\end{lemma}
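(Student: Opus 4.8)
The plan is to establish the three identities in sequence, each feeding into the next. For \eqref{minikowski0} I would start from the tangential projection of the ambient conformal identity. Writing $\xi^T = \xi - u\nu$ for the tangential part of $\xi$ along $\Sigma$, I compute the surface divergence $\mathrm{div}_\Sigma(\xi^T)$. Using \eqref{conformalvec} one gets $\mathrm{div}_\Sigma \xi = (n+1)\varphi$ minus the normal contribution, more precisely $\sum_i \langle \overline\nabla_{e_i}\xi, e_i\rangle = (n+1)\varphi - \langle \overline\nabla_\nu \xi, \nu\rangle$, and combining with the standard formula $\mathrm{div}_\Sigma(\xi^T) = \mathrm{div}_\Sigma \xi - \langle \overline\nabla_\nu \xi,\nu\rangle \cdot 0 \dots$ — carefully: $\mathrm{div}_\Sigma(\xi^T) = \sum_i\langle \overline\nabla_{e_i}\xi^T, e_i\rangle = \sum_i \langle\overline\nabla_{e_i}\xi,e_i\rangle - u\sum_i\langle\overline\nabla_{e_i}\nu,e_i\rangle = n\varphi - uH$, where I used $\langle\overline\nabla_{e_i}\xi,e_i\rangle = \varphi \cdot n$ after tracing \eqref{conformalvec} over a tangent frame (the $L_\xi\bar g = 2\varphi\bar g$ relation gives $\langle\overline\nabla_{e_i}\xi,e_i\rangle = \varphi$ with no summation... actually $2\langle\overline\nabla_{e_i}\xi,e_i\rangle = 2\varphi\langle e_i,e_i\rangle$, so each diagonal term is $\varphi$, summing to $n\varphi$). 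Then \eqref{minikowski0} follows by integrating $\mathrm{div}_\Sigma(\xi^T) = n\varphi - uH$ over the closed hypersurface $\Sigma$ and applying the divergence theorem.

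For the second identity \eqref{minikowski}, I would integrate by parts against the cofactor tensor $\sigma_2^{ij} = Hg^{ij} - h^{ij}$. Consider $\int_\Sigma \sigma_2^{ij}\nabla_i(\xi^T)_j\, d\mu$. On one hand, integrating by parts and using the divergence formula \eqref{sigmadiv}, this equals $-\int_\Sigma \sigma_2^{ij}(h)_j (\xi^T)_i\, d\mu = \int_\Sigma \overline{Ric}(\xi^T,\nu)\,d\mu = \int_\Sigma \overline{Ric}(\xi,\nu)\,d\mu$, since $\overline{Ric}(\nu,\nu)$ is killed... no: $\overline{Ric}(\xi^T,\nu) = \overline{Ric}(\xi,\nu) - u\,\overline{Ric}(\nu,\nu)$, which produces exactly the curvature terms on the right of \eqref{minikowski}. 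On the other hand, I expand $\nabla_i(\xi^T)_j = \nabla_i\langle\xi, e_j\rangle$-type terms using the Gauss formula and the ambient identity \eqref{conformalvec}: the tangential covariant derivative of $\xi^T$ decomposes as $\nabla_i(\xi^T)_j = \varphi g_{ij} - u\, h_{ij}$ (symmetric part coming from \eqref{conformalvec}, plus the second fundamental form term from differentiating the normal component). Tracing against $\sigma_2^{ij}$ and using $\sigma_2^{ij}g_{ij} = (n-1)H = (n-1)\sigma_1$ and $\sigma_2^{ij}h_{ij} = 2\sigma_2$ gives $\int_\Sigma \big((n-1)\varphi\sigma_1 - 2u\sigma_2\big)\,d\mu$. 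Equating the two evaluations yields \eqref{minikowski}.

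Finally, \eqref{minikowski2} is immediate from \eqref{minikowski} once we know $\overline{Ric}(\xi,\nu) = u\,\overline{Ric}(\mathcal N,\mathcal N)$: indeed decompose $\nu = \frac{u}{|\xi|^2}\xi + \nu^\perp$ with $\nu^\perp\perp\xi$; then by the bilinearity of Ricci and \eqref{curvature-eq3} (which gives $\overline{Ric}(\xi,v)=0$ for $v\perp\xi$), all cross terms and the $\nu^\perp$ term vanish, leaving $\overline{Ric}(\xi,\nu) = \frac{u}{|\xi|^2}\overline{Ric}(\xi,\xi) = u\,\overline{Ric}\big(\tfrac{\xi}{|\xi|},\tfrac{\xi}{|\xi|}\big) = u\,\overline{Ric}(\mathcal N,\mathcal N)$. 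Substituting into \eqref{minikowski} gives \eqref{minikowski2}.

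I expect the main obstacle to be the second step: correctly computing the tangential covariant derivative $\nabla_i(\xi^T)_j$ and bookkeeping the normal-component contributions when integrating by parts against $\sigma_2^{ij}$, since one must carefully separate the genuinely tangential derivatives (governed by \eqref{conformalvec}) from the terms involving $h_{ij}$ and be sure no extra Ricci or curvature terms are dropped. The first and third steps are routine by comparison. Note also that \eqref{minikowski0} and \eqref{minikowski} hold with no assumption on the leaves, and only \eqref{minikowski2} invokes the constant-mean-curvature hypothesis on the leaves through \eqref{curvature-eq3}.
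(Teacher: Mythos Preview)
Your proposal is correct and follows essentially the same route as the paper: both compute $\mathrm{div}_\Sigma(\xi^\top)=n\varphi-uH$ for \eqref{minikowski0}, then take the divergence of (equivalently, integrate by parts against) the vector field $W=\sigma_2^{ij}\langle\xi,e_j\rangle e_i$ using \eqref{sigmadiv} for \eqref{minikowski}, and finally invoke \eqref{curvature-eq3} to pass to \eqref{minikowski2}. The only imprecision is that $\nabla_i(\xi^T)_j=\varphi g_{ij}-uh_{ij}$ holds only for the symmetric part (the antisymmetric piece of $\langle\overline\nabla_{e_i}\xi,e_j\rangle$ need not vanish), but since you contract with the symmetric tensor $\sigma_2^{ij}$ this is harmless and you already flag it.
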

\begin{proof}
  If $\Sigma$ is a leaf of the foliation $\mathcal{F}(\xi)$, then $u=\vert\xi\vert$. Since $\Sigma$ is totally umbilical and $H=n\frac{\varphi}{\vert\xi\vert}$, the identities in lemma holds evidently.
  
	Otherwise, we denote the tangential part of the vector field $\xi$ by $\xi^{\top}$ on $\Sigma$. Using (\ref{conformal vc1}), we have
	\begin{align*}
		div_{\Sigma}(\xi^{\top})&=\langle\nabla_i(\xi-u\nu),e_i\rangle=\langle\overline{\nabla}_i\xi,e_i\rangle-uH\\
		&=\frac{1}{2}\textit{L}_\xi\bar{g}(e_i,e_i)-uH=n\varphi-uH.
	\end{align*}
	Integrating the above identity, (\ref{minikowski0}) holds by Stokes' formula.
	
	Next, denote the vector field $W$ on $\Sigma$ by
	\[W:=\sigma_2^{ij}\langle\xi,e_j\rangle e_i,\]
	then combining (\ref{conformal vc1}) with (\ref{sigmadiv}) we get
	\begin{align*}
		div_{\Sigma}W&=div_{\Sigma}(\sigma_2^{ij}\langle\xi,e_j\rangle e_i)\\
		&=\sigma_2^{ij}(h)_i\langle\xi,e_j\rangle+\sigma_2^{ij}e_i\langle\xi,e_j\rangle\\
		&=-\overline{Ric}(e_j,\nu)\langle\xi,e_j\rangle+\varphi\sigma_2^{ij} g_{ij}-\sigma_2^{ij}h_{ij}\langle\xi,\nu\rangle,
	\end{align*}
 where in the second identity we used (\ref{conformal vc1}) and the symmetric property of $\sigma_2^{ij}$.
 
	Using  $\sigma_2^{ij}=Hg^{ij}-h^{ij}$ and $\xi=\langle\xi,e_j\rangle e_j+u\nu$, we derive that
	\begin{align*}
		div_{\Sigma}W&=-\overline{Ric}(\xi,\nu)+u\overline{Ric}(\nu,\nu)+\varphi(Hg^{ij}-h^{ij})g_{ij} -u(Hg^{ij}-h^{ij})h_{ij}\\
		&=-\overline{Ric}(\xi,\nu)+u\overline{Ric}(\nu,\nu)+(n-1)\varphi H-2\sigma_2u,
	\end{align*}
 Now, integrating the above identity leads to (\ref{minikowski}). Finally, if any connect leaf of the foliation $\mathcal{F}(\xi)$ has constant mean curvature, then (\ref{curvature-eq3}) implies that 
 \begin{align*}
    \overline{Ric}(\xi,\nu)=u\overline{Ric}(\mathcal{N},\mathcal{N}),
 \end{align*}
 so (\ref{minikowski2}) follows.
\end{proof}

\section{The volume preserving flow}\label{flow intro}
In this section, we will introduce the mean curvature type flow which is the one introduced  by P. Guan and J. Li \cite{guan-li15} in space forms. The ambient space we are interested  in  is the Riemannian manifold endowed with a complete conformal vector field satisfies conditions (i), (ii), (iii) and (iv) in Theorem \ref{convergence-0}.
 
Let  $\Sigma(t)\subset M^{n+1}$ be a smooth family of closed hypersurfaces, $F:\Sigma\times[0,T)\rightarrow M$ be the embedded map such that $F(\Sigma,t)=\Sigma(t)$. We consider the following flow in $M$:
\begin{equation}\label{flow}
	\left\{
	\begin{aligned}
		&&\frac{\partial F}{\partial t}=(n\varphi-uH)\nu,\\
		&&F(\cdot,0)=\Sigma(0),
	\end{aligned}
	\right.
\end{equation}
where $\nu$ is the unit outward normal of $\Sigma(t)$, $u=\langle\xi,\nu\rangle$ is the support function.
\begin{theorem}\label{monotonethm}
	Assume that $(M^{n+1},\bar{g})$ is a Riemannian manifold endowed with a complete conformal vector field which satisfies the  conditions (i), (iii) and (iv) in Theorem \ref{convergence-0}.
	Let $\Sigma(t)\subset M'$ be a smooth closed solution of  the flow (\ref{flow}) on the time interval $[0,T)$ and $\Omega(t)$ be  the bounded domains enclosed by $\Sigma(t)$ and a fixed closed hypersurface. We also assume that $\Sigma(t)$ are star-shaped hypersurfaces, i.e $u=\langle\xi,\nu\rangle> 0$ on $\Sigma(t)$. 
Then the volume of $\Omega(t)$ is invariant and the  area of $\Sigma(t)$ is non-increasing along the flow.
\end{theorem}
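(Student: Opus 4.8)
The two assertions—constancy of $\mathrm{Vol}(\Omega(t))$ and monotonicity of $\mathrm{Area}(\Sigma(t))$—are proved by differentiating the volume and area functionals along the flow and invoking the Minkowski identities of Lemma \ref{lemma-minikowsi}, exactly in the spirit of Guan--Li \cite{guan-li15} and Guan--Li--Wang \cite{Guan-Li-Wang2019}. First I would record the first-variation formulas for a normal flow $\partial_t F = f\nu$ with speed $f = n\varphi - uH$: one has
\[
\frac{d}{dt}\mathrm{Vol}(\Omega(t)) = \int_{\Sigma(t)} f\, d\mu = \int_{\Sigma(t)} (n\varphi - uH)\, d\mu,
\]
and this vanishes by the first Minkowski identity \eqref{minikowski0}. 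That settles the volume statement immediately and uses only that $\xi$ is conformal (no curvature or CMC-leaf hypothesis needed, consistent with the statement).

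\textbf{Area monotonicity.} For the area I would use the standard evolution $\frac{d}{dt} d\mu = fH\, d\mu$, so that
\[
\frac{d}{dt}\mathrm{Area}(\Sigma(t)) = \int_{\Sigma(t)} fH\, d\mu = \int_{\Sigma(t)} (n\varphi - uH) H\, d\mu = \int_{\Sigma(t)} \big(n\varphi\sigma_1 - u\sigma_1^2\big)\, d\mu.
\]
The point is to rewrite the right-hand side so that its sign becomes visible. Using $\sigma_1^2 = |h|^2 + 2\sigma_2$ (Newton's identity) and the second Minkowski identity in the form \eqref{minikowski2}, namely $(n-1)\int_\Sigma \varphi\sigma_1\, d\mu = 2\int_\Sigma \sigma_2 u\, d\mu + \int_\Sigma u\big(\overline{Ric}(\mathcal{N},\mathcal{N}) - \overline{Ric}(\nu,\nu)\big)\, d\mu$, I would substitute to eliminate the $\int \sigma_2 u$ term and obtain
\[
\frac{d}{dt}\mathrm{Area}(\Sigma(t)) = -\int_{\Sigma(t)} u\Big(|h|^2 - \tfrac{1}{n}\sigma_1^2\Big)\, d\mu - \int_{\Sigma(t)} u\big(\overline{Ric}(\mathcal{N},\mathcal{N}) - \overline{Ric}(\nu,\nu)\big)\, d\mu
\]
(after collecting the $\int n\varphi\sigma_1$ and $\int u\sigma_1^2$ terms; I would double-check the bookkeeping of the factor $n/(n-1)$ here, as that is the one place the algebra is delicate). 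The first integrand is $\geq 0$ pointwise since $|h|^2 \geq \frac1n\sigma_1^2$ by Cauchy--Schwarz and $u > 0$ by the star-shapedness hypothesis; the second is $\geq 0$ by the Ricci condition (iv), which, combined with \eqref{curvature-eq3} (giving $\overline{Ric}(\xi,\nu) = u\,\overline{Ric}(\mathcal{N},\mathcal{N})$ when the leaves are CMC), says precisely that $\mathcal{N}$ is a direction of least Ricci curvature. Hence $\frac{d}{dt}\mathrm{Area}(\Sigma(t)) \leq 0$.

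\textbf{Main obstacle.} The conceptual content is entirely in the two Minkowski identities, which are already established; the remaining difficulty is purely the algebraic rearrangement that turns $\int(n\varphi\sigma_1 - u\sigma_1^2)$ into a manifestly nonpositive expression, i.e. correctly combining Newton's identity with \eqref{minikowski2} and tracking the dimensional constants so that the $u(|h|^2 - \frac1n\sigma_1^2)$ term emerges with the right sign. I expect no genuine obstruction beyond care with these constants; in particular, condition (ii) ($\varphi^2 - \xi(\varphi) > 0$) is not needed for this theorem (it is only invoked for the a priori estimates later), which is why the statement of Theorem \ref{monotonethm} omits it. I would also remark at the end that equality in the area-monotonicity holds at a time $t$ iff $\Sigma(t)$ is totally umbilical and $\overline{Ric}(\nu,\nu) = \overline{Ric}(\mathcal{N},\mathcal{N})$ there, which is the germ of the characterization of the limit hypersurface used in Theorem \ref{convergence-0}.
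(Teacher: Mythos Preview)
Your approach is exactly the one in the paper: first-variation formulas plus the two Minkowski identities, then the Newton--Maclaurin (equivalently Cauchy--Schwarz) inequality and the Ricci hypothesis (iv). There is, however, a sign and constant slip in the displayed formula you wrote for $A'(t)$. Carrying out the substitution of \eqref{minikowski2} into $\int (n\varphi H - uH^2)\,d\mu$ gives
\[
A'(t)=\int_{\Sigma(t)}\Big(\tfrac{2n}{n-1}\sigma_2 - H^2\Big)u\,d\mu
      +\tfrac{n}{n-1}\int_{\Sigma(t)} u\big(\overline{Ric}(\mathcal N,\mathcal N)-\overline{Ric}(\nu,\nu)\big)\,d\mu,
\]
i.e.\ the Ricci term carries the coefficient $+\tfrac{n}{n-1}$, not $-1$. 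Condition (iv) says $\overline{Ric}(\mathcal N,\mathcal N)\le \overline{Ric}(\nu,\nu)$, so this second integral is $\le 0$; your sentence ``the second is $\ge 0$'' has the inequality reversed. The first integral is $\le 0$ by $H^2\ge \tfrac{2n}{n-1}\sigma_2$ (equivalently $|h|^2\ge \tfrac1n H^2$) together with $u>0$. With these corrections the argument goes through verbatim, and your remark on the equality case is also the one the paper uses later.
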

\begin{proof}
Denote the volume of $\Omega(t)$ by $V(t)$ and the area of $\Sigma(t)$ by $A(t)$, along the flow (\ref{flow}) we have
	\begin{align*}
		V'(t)=\int_{\Sigma(t)}(n\varphi-uH)d\mu,
	\end{align*}
\begin{align*}
	A'(t)=\int_{\Sigma(t)}(n\varphi-uH)Hd\mu.
\end{align*}
By the first Minkowski identity (\ref{minikowski0}), $V'(t)=0$, this means the enclosed volume is constant along the flow.

Next, we prove that the area of $\Sigma(t)$ is non-increasing along the flow (\ref{flow}). Using the Minkowski identity (\ref{minikowski2}), we have
\begin{align}\label{mono-eq1}
	A'(t)&=\int_{\Sigma(t)}(n\varphi-uH)Hd\mu\nonumber\\
	&=\int_{\Sigma}\left(\frac{2n}{n-1}\sigma_2-H^2+\frac{n}{n-1}(\overline{Ric}(\mathcal{N},\mathcal{N})-\overline{Ric}(\nu,\nu))\right)ud\mu\nonumber\\
	&\leq \frac{n}{n-1}\int_{\Sigma}(\overline{Ric}(\mathcal{N},\mathcal{N})-\overline{Ric}(\nu,\nu))ud\mu,
\end{align}
where in the last inequality we use  the Newton-Maclaurin inequality $H^2\geq\frac{2n}{n-1}\sigma_2$ and the assumption $u\geq 0$.
Then by the condition (iv) in Theorem \ref{convergence-0}, we have
\begin{align*}
	A'(t)&\leq 0.
\end{align*}
\end{proof}

Next, we summarize the evolution equations in the following proposition.
\begin{proposition}[Evolution equations]
Assume that $(M^{n+1},\bar{g})$ is a Riemannian manifold endowed with a complete conformal vector field which satisfies the conditions (i) and (iii) in Theorem \ref{convergence-0}.
Let $\Sigma(t)\subset M'$ be a smooth closed solution of  the flow (\ref{flow}) on the time interval $[0,T)$.  Then under the flow (\ref{flow}), we derive the following evolution equations:
	\begin{align}
		\partial_tg_{ij}&=2(n\varphi-uH)h_{ij}, \label{metric1}\\
		\partial_t\nu&=-\nabla(n\varphi-uH), \label{norm vec}\\
		\partial_th_{ij}&=-\nabla_{\partial_iF}\nabla_{\partial_jF}(n\varphi-uH)+(n\varphi-uH)h_{ik}h^k_j\nonumber\\
  &\quad-(n\varphi-uH)\bar{R}_{i\nu j\nu}  ,\label{second form1}
	\end{align}
\end{proposition}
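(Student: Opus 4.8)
The plan is to derive the three identities by standard variational computations for a normal speed flow $\partial_t F = f\nu$ with $f = n\varphi - uH$, specializing the well-known general formulas to our particular choice of $f$. None of the three equations uses the conformal structure in an essential way at this stage; the conditions (i) and (iii) are carried along only so that $\varphi$, $u$, $H$, $\nu$ are all smooth along the flow and so that earlier propositions may be invoked later.

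First I would record the metric variation. Writing $\partial_t g_{ij} = \partial_t \langle \partial_i F, \partial_j F\rangle$ and commuting $\partial_t$ with $\partial_i$, one gets $\partial_t g_{ij} = \langle \overline{\nabla}_{\partial_i F}(f\nu), \partial_j F\rangle + \langle \partial_i F, \overline{\nabla}_{\partial_j F}(f\nu)\rangle$. Since $\langle \nu, \partial_j F\rangle = 0$, the derivative of $f$ in the normal direction drops out and one is left with $f\langle \overline{\nabla}_{\partial_i F}\nu, \partial_j F\rangle + f\langle \partial_i F, \overline{\nabla}_{\partial_j F}\nu\rangle = 2 f h_{ij}$, which is \eqref{metric1}. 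For the normal vector, I would differentiate $\langle \nu,\nu\rangle = 1$ to see that $\partial_t \nu$ is tangential, and differentiate $\langle \nu, \partial_i F\rangle = 0$ to get $\langle \partial_t\nu, \partial_i F\rangle = -\langle \nu, \overline{\nabla}_{\partial_i F}(f\nu)\rangle = -\partial_i f$; hence $\partial_t\nu = -\nabla f = -\nabla(n\varphi - uH)$, which is \eqref{norm vec}.

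The third equation \eqref{second form1} is the only one with real content. I would start from $h_{ij} = -\langle \overline{\nabla}_{\partial_i F}\partial_j F, \nu\rangle$ (sign convention consistent with the paper's earlier formulas), differentiate in $t$, and commute $\partial_t$ past the two spatial derivatives, picking up a curvature term of the ambient metric whenever $\partial_t$ is swapped with $\overline{\nabla}_{\partial_i F}$ or $\overline{\nabla}_{\partial_j F}$ acting in the direction of $\partial_t F = f\nu$. Collecting terms: the piece where both spatial derivatives hit $f$ gives $-\nabla_{\partial_i F}\nabla_{\partial_j F} f$; the curvature commutator contributes $-f\,\bar{R}_{i\nu j\nu}$; and the term involving $\overline{\nabla}\nu$ together with the (tangential) expression for $\partial_t\nu$ from \eqref{norm vec} produces $+f\, h_{ik}h^k_j$ after using the Weingarten relation $\overline{\nabla}_{\partial_i F}\nu = h_i^k \partial_k F$. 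Assembling these yields \eqref{second form1}. The main obstacle here is purely bookkeeping: getting the signs right in the curvature commutator and in the Weingarten/Gauss relations under the paper's conventions, and making sure the Hessian $\nabla_{\partial_i F}\nabla_{\partial_j F} f$ is the intrinsic Hessian on $\Sigma(t)$ (so that the Christoffel corrections from $\overline{\nabla}$ restricted to $\Sigma$ are accounted for). Once the general-flow versions are in hand, the specialization to $f = n\varphi - uH$ is immediate — one simply leaves $f$ abbreviated — so I would not expand $\nabla(n\varphi - uH)$ or its Hessian at this point; those expansions belong in the later a priori estimates where the conformal identities of Proposition \ref{conformal vec prop1} are brought to bear.
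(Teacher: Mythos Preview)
Your proposal is correct and follows essentially the same approach as the paper: the paper cites \eqref{metric1} and \eqref{norm vec} as standard (referring to Huisken and Guan--Li), and for \eqref{second form1} it differentiates $h_{ij}=-\langle\overline{\nabla}_{\partial_iF}\partial_jF,\nu\rangle$, commutes $\overline{\nabla}_{\partial_tF}$ past $\overline{\nabla}_{\partial_iF}$ to produce the curvature term, and then uses $\langle\overline{\nabla}_i\overline{\nabla}_j\nu,\nu\rangle=-h_{i\ell}h^{\ell}_j$ to obtain the $f\,h_{ik}h^k_j$ contribution---exactly the computation you outline. Your explicit derivations of \eqref{metric1} and \eqref{norm vec} are in fact more detailed than what the paper records.
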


\begin{proof}
 The proof of (\ref{metric1}) and (\ref{norm vec}) is standard (see e.g \cite{huisken1984flow,guan-li15,Guan-Li-Wang2019}).

	By definition we have
	\begin{align*}
		h_{ij}=-\langle\overline{\nabla}_{\partial_iF}\partial_jF,\nu\rangle.
	\end{align*}
	Along the flow (\ref{flow}), we compute
	\begin{align}
		\partial_th_{ij}&=-\partial_t\langle\overline{\nabla}_{\partial_iF}\partial_jF,\nu\rangle\nonumber\\
		&=-\langle\overline{\nabla}_{\partial_tF}\overline{\nabla}_{\partial_iF}\partial_jF,\nu\rangle-\langle\overline{\nabla}_{\partial_iF}\partial_jF,\partial_t\nu\rangle\nonumber\\
		&=-\langle\overline{\nabla}_{\partial_iF}\overline{\nabla}_{\partial_tF}\partial_jF,\nu\rangle+\langle\bar{R}(\partial_iF,\partial_tF)\partial_jF,\nu\rangle
  +h_{ij}\langle\nu,\partial_t\nu\rangle\nonumber\\
		&=-\langle\overline{\nabla}_{\partial_iF}\overline{\nabla}_{\partial_jF}((n\varphi-uH)\nu),\nu\rangle-(n\varphi-uH)\bar{R}_{i\nu j\nu}\nonumber\\
		&=-\overline{\nabla}_{\partial_iF}\overline{\nabla}_{\partial_jF}(n\varphi-uH)-(n\varphi-uH)\langle\overline{\nabla}_{\partial_iF}\overline{\nabla}_{\partial_jF}\nu,\nu\rangle\nonumber\\
  &\quad-(n\varphi-uH)\bar{R}_{i\nu j\nu}.\label{eq-5}
	\end{align}
$\langle\overline{\nabla}_i\nu,\nu\rangle=0$ implies 
\begin{align}\label{eq-6}
	\langle\overline{\nabla}_i\overline{\nabla}_j\nu,\nu\rangle=-\langle\overline{\nabla}_i\nu,\overline{\nabla}_j\nu\rangle=-h_{i\ell}h^{\ell}_j.
\end{align}
	Combining (\ref{eq-5}) with (\ref{eq-6}), we have (\ref{second form1}).
\end{proof}

We now show that the function $\frac{\vert\xi\vert^2}{\varphi^2}$ is uniform bounded from above and below by the initial data.
\begin{proposition}\label{C0est-2}
	Assume that $(M^{n+1},\bar{g})$ is a Riemannian manifold endowed with a complete conformal vector field satisfying the conditions (i), (ii) and (iii) in Theorem \ref{convergence-0}.  Let $\Sigma_0$ be a closed hypersurface embedded in $M'$, if $\Sigma(t)$ is a smooth solution of the flow (\ref{flow}) starting from $\Sigma_0$.
	Then for any $(p,t)\in\Sigma\times[0,T)$ we have
	\begin{align}\label{C0est2}
		\min_{p\in\Sigma}\frac{\vert\xi\vert^2}{\varphi^2}(p,0)\leq \frac{\vert\xi\vert^2}{\varphi^2}(p,t)\leq \max_{p\in\Sigma}\frac{\vert\xi\vert^2}{\varphi^2}(p,0). 
	\end{align}
	From this, we conclude that $\Sigma(t)\cap\mathcal{Z}(\xi)=\emptyset$ along the flow.
\end{proposition}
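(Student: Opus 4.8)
The plan is to show that $\frac{\vert\xi\vert^2}{\varphi^2}$ satisfies a parabolic evolution equation along the flow whose zeroth-order term has a favourable sign, so that the maximum principle applies. First I would compute $\partial_t\left(\frac{\vert\xi\vert^2}{\varphi^2}\right)$ at a point of $\Sigma(t)$. Since $\frac{\vert\xi\vert^2}{\varphi^2}$ is a fixed function on $M$, its time derivative along the flow is just $\langle\overline{\nabla}\frac{\vert\xi\vert^2}{\varphi^2},\partial_tF\rangle = (n\varphi-uH)\langle\overline{\nabla}\frac{\vert\xi\vert^2}{\varphi^2},\nu\rangle$. By the identity $(\ref{secconformal-eq6})$ in Proposition \ref{conformal vec prop1}, $\overline{\nabla}\frac{\vert\xi\vert^2}{\varphi^2}=2\frac{\varphi^2-\xi(\varphi)}{\varphi^3}\xi$, so this becomes $2\frac{\varphi^2-\xi(\varphi)}{\varphi^3}(n\varphi-uH)u$, using $\langle\xi,\nu\rangle=u$.

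Next I would compute the Laplacian $\Delta_{\Sigma(t)}\frac{\vert\xi\vert^2}{\varphi^2}$, where $\Delta_{\Sigma(t)}$ is the induced Laplacian. Writing $f=\frac{\vert\xi\vert^2}{\varphi^2}$ and using the splitting $\xi=\xi^\top+u\nu$, one has $\Delta_{\Sigma}f = \sum_i \langle\overline{\nabla}_{e_i}\overline{\nabla}f,e_i\rangle + \langle\overline{\nabla}f,\vec{H}\rangle = \overline{\Delta}f - \langle\overline{\nabla}_\nu\overline{\nabla}f,\nu\rangle - H\langle\overline{\nabla}f,\nu\rangle$. The term $\langle\overline{\nabla}f,\nu\rangle = 2\frac{\varphi^2-\xi(\varphi)}{\varphi^3}u$ again. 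For the ambient Hessian terms I would differentiate $(\ref{secconformal-eq6})$ once more, using $(\ref{secconformal-eq5})$ for $\overline{\nabla}_X\frac{\xi}{\varphi}$ and $(\ref{secconformal-eq7})$ to handle $\overline{\nabla}\frac{\varphi^2-\xi(\varphi)}{\varphi}$; the key structural fact is that $\overline{\nabla}\frac{\varphi^2-\xi(\varphi)}{\varphi}$ is parallel to $\xi$, so the Hessian of $f$ is a combination of $X$ and terms involving $\langle X,\xi\rangle$, hence $\langle\overline{\nabla}_{e_i}\overline{\nabla}f,e_i\rangle$ evaluated on a tangent frame produces a controllable expression. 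Assembling these, I expect $\partial_t f = \Delta_{\Sigma(t)} f + \langle b,\nabla_{\Sigma} f\rangle + c\,f$ for some bounded vector field $b$ and some function $c$, or more precisely $\partial_t f - \Delta_{\Sigma}f$ equals a term proportional to $(\varphi^2-\xi(\varphi))\langle\nabla_\Sigma f,\cdot\rangle$ plus a zeroth-order contribution that vanishes exactly where $\nabla_\Sigma f = 0$ — because $\nabla_\Sigma f$ is the tangential projection of $2\frac{\varphi^2-\xi(\varphi)}{\varphi^3}\xi$, which is $2\frac{\varphi^2-\xi(\varphi)}{\varphi^3}\xi^\top$.

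The main obstacle, and the point requiring care, is checking that the resulting scalar parabolic equation has no bad zeroth-order term — that is, that at an interior spatial maximum (where $\nabla_\Sigma f = 0$, equivalently $\xi^\top=0$, equivalently $\nu=\pm\mathcal{N}$) the remaining terms force $\partial_t f \le 0$, and symmetrically $\partial_t f\ge 0$ at a minimum. At such a point $u = \pm\vert\xi\vert$ and $\xi$ is normal, so $H$ and the ambient curvature enter; I would use conditions (i) $\varphi>0$ and (ii) $\varphi^2-\xi(\varphi)>0$ here, together with the evolution of $f$ computed above, to conclude the sign. Once the differential inequality $\partial_t f \le \Delta_\Sigma f + \langle b,\nabla_\Sigma f\rangle$ at maxima (resp. $\ge$ at minima) is established, the parabolic maximum principle on the compact hypersurfaces $\Sigma(t)$ gives $(\ref{C0est2})$ immediately. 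Finally, since $\mathcal{Z}(\xi) = \{f = 0\}$ (as $\varphi>0$ bounds $\varphi$ away from $0$ on any region where $f$ is bounded, so $f\to 0$ iff $\vert\xi\vert\to 0$), and $\min_{\Sigma_0} f > 0$ because $\Sigma_0\subset M'$ is compact, the lower bound in $(\ref{C0est2})$ yields $\Sigma(t)\cap\mathcal{Z}(\xi)=\emptyset$ for all $t\in[0,T)$.
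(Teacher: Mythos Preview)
Your approach is essentially the paper's: compute $\partial_t f$ with $f=\frac{|\xi|^2}{\varphi^2}$, compute the hypersurface Laplacian, note that $\nabla_\Sigma f = 2\frac{\varphi^2-\xi(\varphi)}{\varphi^3}\xi^\top$ so that at a spatial extremum $\xi^\top=0$ and all the extra terms drop out, then apply the maximum principle. One small correction: the parabolic operator that appears is $u\Delta_g$, not $\Delta_g$ --- the paper's identity is $\partial_t f = u\Delta_g f - 2\frac{u}{\varphi^2|\xi|^2}\xi\!\left(\frac{\varphi^2-\xi(\varphi)}{\varphi}\right)(|\xi|^2-u^2) + 4\frac{\varphi^2-\xi(\varphi)}{\varphi^4}\langle\nabla\varphi,\xi^\top\rangle u$, and at a critical point this reduces exactly to $\partial_t f = u\Delta_g f$ (both residual terms vanish, not merely have a sign), so no separate sign analysis involving $H$ or ambient curvature is needed there.
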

\begin{remark}\label{C0remark}
	\item 
	\begin{itemize}
		\item[(1)]	The assumption $\varphi^2-\xi(\varphi)>0$ is necessary for the technique. Recall that the mean curvature of any leaf of the foliation $\mathcal{F}(\xi)$ is $n\frac{\varphi}{\vert\xi\vert}$. By direct computation we have 	
		\begin{align*}
			\xi(\frac{\varphi}{\vert\xi\vert})=-\frac{\varphi^2-\xi(\varphi)}{\vert\xi\vert}.
		\end{align*}
		Therefore, the condition $\varphi^2-\xi(\varphi)>0$ implies that the mean curvatures of leaves of the foliation $\mathcal{F}(\xi)$ are strictly decreasing along the direction $\xi$. 
		\item[(2)] The bounds of $\frac{\vert\xi\vert^2}{\varphi^2}$ implies that the evolving hypersurfaces are restricted in the compact domain enclosed by two leaves of the foliation $\mathcal{F}(\xi)$ determined by $\Sigma_0$.
	\end{itemize}
\end{remark}
\begin{proof}
	Along the flow (\ref{flow}), by (\ref{secconformal-eq6}) we have 
	\begin{align*}
		\partial_t\frac{\vert\xi\vert^2}{\varphi^2}=(n\varphi-uH)\nu(\frac{\vert\xi\vert^2}{\varphi^2})=2\frac{\varphi^2-\xi(\varphi)}{\varphi^3}(n\varphi-uH)u.
	\end{align*}
	Using (\ref{secconformal-eq6}) again,
	\begin{align}\label{C0est-eq2}
		\nabla_i\frac{\vert\xi\vert^2}{\varphi^2}=2\frac{\varphi^2-\xi(\varphi)}{\varphi^3}\langle\xi,e_i\rangle.
	\end{align}
Moreover, by (\ref{conformalvec}) and (\ref{secconformal-eq7}) we get
	\begin{align*}
		u\Delta_g\frac{\vert\xi\vert^2}{\varphi^2}=&2e_i\left(\frac{\varphi^2-\xi(\varphi)}{\varphi}\right)\frac{1}{\varphi^2}\langle\xi,e_i\rangle u-4\frac{\varphi^2-\xi(\varphi)}{\varphi^4}e_i(\varphi)\langle\xi,e_i\rangle u\\
		&+2\frac{\varphi^2-\xi(\varphi)}{\varphi^3}\langle\overline{\nabla}_i\xi,e_i\rangle u-2\frac{\varphi^2-\xi(\varphi)}{\varphi^3}u^2H\\
		=&2\frac{u}{\varphi^2\vert\xi\vert^2}\xi\left(\frac{\varphi^2-\xi(\varphi)}{\varphi}\right)(\vert\xi\vert^2-u^2)-4\frac{\varphi^2-\xi(\varphi)}{\varphi^4}e_i(\varphi)\langle\xi,e_i\rangle u\\
		&+2n\frac{\varphi^2-\xi(\varphi)}{\varphi^2}u-2\frac{\varphi^2-\xi(\varphi)}{\varphi^3}u^2H.
	\end{align*}
	Then
	\begin{align}\label{C0est-eq1}
		\partial_t\frac{\vert\xi\vert^2}{\varphi^2}=&u\Delta_g\frac{\vert\xi\vert^2}{\varphi^2}-2\frac{u}{\varphi^2\vert\xi\vert^2}\xi\left(\frac{\varphi^2-\xi(\varphi)}{\varphi}\right)(\vert\xi\vert^2-u^2)\nonumber\\
  &+4\frac{\varphi^2-\xi(\varphi)}{\varphi^4}e_i(\varphi)\langle\xi,e_i\rangle u.
	\end{align}
	Since $\varphi>0$ and $\varphi^2-\xi(\varphi)>0$, it follows from  (\ref{C0est-eq2}) that at the critical points we have 
	\begin{align*}
		\langle\xi,e_i\rangle=0,\ \forall i=1,\cdots,n.
	\end{align*}
	Together with (\ref{C0est-eq1}), at critical points, 
	\begin{align*}
		\partial_t\frac{\vert\xi\vert^2}{\varphi^2}=&u\Delta_g\frac{\vert\xi\vert^2}{\varphi^2}.
	\end{align*}
	By the standard maximum principle, we get the uniform bounds of $\frac{\vert\xi\vert^2}{\varphi^2}$.
\end{proof}
\section{Convergence theorem of the volume preserving flow}\label{convergence thm}
In this section, under the condition (i), (ii), (iii) and (iv) in Theorem \ref{convergence-0}, we will prove the uniform upper bound for the mean curvature $H$ and the uniform positive lower bound for the support function $u$ along the flow. The proofs of above estimates follow similar ideas in \cite{Guan-Li-Wang2019}. As a conclusion, we can show that the flow (\ref{flow}) exists for all time with the star-shaped initial data $\Sigma_0$. Moreover, the solution hypersurfaces converge smoothly to a totally umbilical hypersurface.

First, we derive the evolution equations of the support function $u$ and the mean curvature $H$.
\begin{proposition}
		Assume that $(M^{n+1},\bar{g})$ is a Riemannian manifold endowed with a complete conformal vector field $\xi$ satisfying the condition (i) and (iii) in Theorem \ref{convergence-0}. Let $\Sigma_0$ be a closed hypersurface embedded in $M'$, then under the flow (\ref{flow}), the support function $u=\langle\xi,\nu\rangle$ evolves by
	\begin{align}\label{ev-support fun}
		\partial_tu=&u\Delta_gu+H\langle\xi,\nabla u\rangle+\vert A\vert^2u^2-2\varphi uH+n\varphi^2-n\xi(\varphi)\nonumber\\
  &+2n\nu(\varphi)u+u^2\overline{Ric}(\nu,\nu).
	\end{align}
\end{proposition}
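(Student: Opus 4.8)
The plan is to differentiate the defining identity $u=\langle\xi,\nu\rangle$ along the flow and then to trade the first-order spatial term that appears for a genuine Laplacian by means of an elliptic (Bochner-type) identity for the support function. Since $\xi$ is a fixed ambient vector field, along the flow one has $\partial_t u=\langle\overline{\nabla}_{\partial_tF}\xi,\nu\rangle+\langle\xi,\partial_t\nu\rangle$. Writing the speed as $f:=n\varphi-uH$, so $\partial_tF=f\nu$: conformality in the form (\ref{conformalvec}) gives $\langle\overline{\nabla}_\nu\xi,\nu\rangle=\varphi$, so the first term is $f\varphi$; by the evolution equation (\ref{norm vec}) the vector $\partial_t\nu=-\nabla f$ is tangential, so the second term is $-\langle\xi^\top,\nabla f\rangle$, where $\xi^\top=\xi-u\nu$. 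Hence
\[
\partial_t u=\varphi f-\langle\xi^\top,\nabla f\rangle .
\]

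I would then expand $f=n\varphi-uH$, using $\nabla f=n\nabla^\Sigma\varphi-H\nabla u-u\nabla H$, the identity $\langle\xi^\top,\nabla^\Sigma\varphi\rangle=\langle\xi-u\nu,\overline{\nabla}\varphi\rangle=\xi(\varphi)-u\,\nu(\varphi)$, and $\langle\xi^\top,\nabla u\rangle=\langle\xi,\nabla u\rangle$, to get
\[
\partial_t u=n\varphi^2-\varphi uH-n\xi(\varphi)+n\,\nu(\varphi)\,u+H\langle\xi,\nabla u\rangle+u\,\langle\xi^\top,\nabla H\rangle .
\]
Comparison with (\ref{ev-support fun}) shows that it remains exactly to prove the elliptic identity
\[
\langle\xi^\top,\nabla H\rangle=\Delta_g u+\vert A\vert^2u-\varphi H+n\,\nu(\varphi)+u\,\overline{Ric}(\nu,\nu)
\]
on any hypersurface $\Sigma\subset M'$: multiplying this by $u$ and substituting into the previous line, and combining the two copies of $n\,\nu(\varphi)u$, produces (\ref{ev-support fun}) verbatim.

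The heart of the matter is this elliptic identity, which I would obtain by computing $\Delta_g u$ from scratch. Differentiating $u=\langle\xi,\nu\rangle$ with the Weingarten relation and (\ref{1stder}) yields $\nabla_i u=\tfrac{u}{\varphi}\nabla_i\varphi-\tfrac{\nu(\varphi)}{\varphi}\langle\xi,e_i\rangle+h_i^{k}\langle\xi,e_k\rangle$. Differentiating once more and tracing: the term $h_i^{k}\langle\xi,e_k\rangle$ contributes, via the Codazzi trace (\ref{sigmadiv}), $\langle\nabla H,\xi^\top\rangle+\overline{Ric}(\xi,\nu)-u\,\overline{Ric}(\nu,\nu)$ as well as the terms $\varphi H-\vert A\vert^2u$ (using (\ref{1stder}) and the symmetry of $h$); the term $\tfrac{u}{\varphi}\nabla_i\varphi$ contributes $\tfrac{u}{\varphi}\Delta_\Sigma\varphi$, which I rewrite by the Gauss relation $\Delta_\Sigma\varphi=\overline{\Delta}\varphi-\langle\overline{\nabla}_\nu\overline{\nabla}\varphi,\nu\rangle-H\,\nu(\varphi)$; and the term $-\tfrac{\nu(\varphi)}{\varphi}\langle\xi,e_i\rangle$ contributes $-\tfrac{\nu(\varphi)}{\varphi}\,\mathrm{div}_\Sigma(\xi^\top)=-n\,\nu(\varphi)+\tfrac{\nu(\varphi)uH}{\varphi}$ together with further Hessian-of-$\varphi$ terms. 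All the surviving first-order-in-$\varphi$ terms cancel in pairs (several of them because $h$ is symmetric), and the leftover second-order data of $\varphi$, namely $\tfrac{u}{\varphi}\overline{\Delta}\varphi-\tfrac{u}{\varphi}\langle\overline{\nabla}_\nu\overline{\nabla}\varphi,\nu\rangle-\tfrac{1}{\varphi}\langle\overline{\nabla}_{\xi^\top}\overline{\nabla}\varphi,\nu\rangle+\overline{Ric}(\xi,\nu)$, is eliminated using Proposition \ref{conformal vec prop2}: (\ref{curvature-eq3}) forces $\langle\overline{\nabla}_{\xi^\top}\overline{\nabla}\varphi,\nu\rangle=u\big(\langle\overline{\nabla}_{\mathcal N}\overline{\nabla}\varphi,\mathcal N\rangle-\langle\overline{\nabla}_\nu\overline{\nabla}\varphi,\nu\rangle\big)$, while (\ref{curvature-eq2}) gives $\overline{Ric}(\xi,\nu)=-\tfrac{u}{\varphi}\overline{\Delta}\varphi+\tfrac{u}{\varphi}\langle\overline{\nabla}_{\mathcal N}\overline{\nabla}\varphi,\mathcal N\rangle$; adding these, every $\overline{\nabla}\,\overline{\nabla}\varphi$ and $\overline{\Delta}\varphi$ term cancels and the elliptic identity falls out.

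The main obstacle is precisely the bookkeeping in this $\Delta_g u$ computation: keeping track of the many terms of $\nabla^2 u$, spotting the cancellations forced by the symmetry of the second fundamental form, and --- the crucial point --- verifying that all the second-order information about $\varphi$ collapses. That collapse is exactly where the standing hypotheses (i) ($\varphi>0$) and (iii) (the leaves of $\mathcal{F}(\xi)$ are closed, totally umbilic, with constant mean curvature) are used, since they are what make Proposition \ref{conformal vec prop2}, hence (\ref{curvature-eq2})--(\ref{curvature-eq3}), available. The remaining pieces --- the variational computation of $\partial_tu$ and $\partial_t\nu$, and the final collecting of terms --- are routine.
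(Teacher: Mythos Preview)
Your proposal is correct and follows essentially the same approach as the paper: both compute $\partial_t u$ directly from the flow and (\ref{norm vec}), then derive the elliptic identity $\Delta_g u=\langle\xi,\nabla H\rangle-n\nu(\varphi)+\varphi H-|A|^2u-u\,\overline{Ric}(\nu,\nu)$ (your identity is its rearrangement) by differentiating the formula for $\nabla_i u$ coming from (\ref{1stder}), using Codazzi, and invoking (\ref{curvature-eq2})--(\ref{curvature-eq3}) to collapse the Hessian-of-$\varphi$ terms. The bookkeeping and the use of the hypotheses you outline match the paper's proof step by step.
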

\begin{proof}
  First, along the flow (\ref{flow}),	by (\ref{conformalvec}) and (\ref{norm vec}) we have 
	\begin{align}\label{suppfun-eq0}
		\partial_tu=&\langle\overline{\nabla}_{\partial_tF}\xi,\nu\rangle+\langle\xi,\partial_t\nu\rangle\nonumber\\
		=&(n\varphi-uH)\langle\overline{\nabla}_{\nu}\xi,\nu\rangle-\langle\xi,\nabla(n\varphi-uH)\rangle\nonumber\\
		=&n\varphi^2-\varphi uH-n\xi(\varphi)+nu\nu(\varphi)+\langle\xi,\nabla(uH)\rangle.
	\end{align}
Next, using (\ref{1stder}), we obtain
	\begin{align}\label{suppfun-eq1}
		\nabla_iu=&\langle\overline{\nabla}_i\xi,\nu\rangle+h_{ik}\langle\xi,e_k\rangle\nonumber\\
		=&u\frac{\langle\overline{\nabla}\varphi,e_i\rangle}{\varphi}-\frac{\langle\xi,e_i\rangle}{\varphi}\langle\overline{\nabla}\varphi,\nu\rangle+h_{ik}\langle\xi,e_k\rangle.
	\end{align}
Hence, using (\ref{1stder}) again, we have 
	\begin{align}\label{suppfun-eq2}
		\Delta_gu=&\frac{\langle\nabla u,\nabla\varphi\rangle}{\varphi}-\frac{u}{\varphi^2}\vert\nabla\varphi\vert^2+\frac{u}{\varphi}\langle\overline{\nabla}_i\overline{\nabla}\varphi,e_i\rangle-n\nu(\varphi)\nonumber\\
		&+\frac{\langle\xi,e_i\rangle e_i(\varphi)}{\varphi^2}\nu(\varphi)-\frac{\langle\xi,e_i\rangle}{\varphi}\langle\overline{\nabla}_i\overline{\nabla}\varphi,\nu\rangle-h_{ik}\frac{\langle\xi,e_i\rangle e_k(\varphi)}{\varphi}\\
		&+h_{ik,i}\langle\xi,e_k\rangle+\varphi H-\vert A\vert^2u.\nonumber
	\end{align}
From (\ref{suppfun-eq1}), we see that
\begin{align}\label{suppfun-eq3}
	&\frac{\langle\nabla u,\nabla\varphi\rangle}{\varphi}=\frac{u}{\varphi^2}\vert\nabla\varphi\vert^2-\frac{\langle\xi,e_i\rangle e_i(\varphi)}{\varphi^2}\nu(\varphi)+h_{ik}\frac{\langle\xi,e_i\rangle e_k(\varphi)}{\varphi}.
\end{align}
 Recall (\ref{curvature-eq3}), we have
\begin{align}\label{suppfun-eq4}
 \langle\xi,e_i\rangle \langle\overline{\nabla}_i\overline{\nabla}\varphi,\nu\rangle=& \langle\overline{\nabla}_{\xi}\overline{\nabla}\varphi,\nu\rangle- \langle\overline{\nabla}_{\nu}\overline{\nabla}\varphi,\nu\rangle u\nonumber\\
 =&\langle\overline{\nabla}_{\mathcal{N}}\overline{\nabla}\varphi,\mathcal{N}\rangle u- \langle\overline{\nabla}_{\nu}\overline{\nabla}\varphi,\nu\rangle u.
\end{align}
Substituting (\ref{suppfun-eq3}) and (\ref{suppfun-eq4}) into (\ref{suppfun-eq2}), it reduces to
\begin{align}\label{suppfun-eq5}
		\Delta_gu
		=&\frac{u}{\varphi}\langle\overline{\nabla}_i\overline{\nabla}\varphi,e_i\rangle-\frac{u}{\varphi}\langle\overline{\nabla}_{\mathcal{N}}\overline{\nabla}\varphi,\mathcal{N}\rangle +\frac{u}{\varphi}\langle\overline{\nabla}_{\nu}\overline{\nabla}\varphi,\nu\rangle-n\nu(\varphi)\nonumber\\
  &+h_{ik,i}\langle\xi,e_k\rangle+\varphi H-\vert A\vert^2u\nonumber\\
		=&\frac{u}{\varphi}(\overline{\Delta}\varphi-\langle\overline{\nabla}_{\mathcal{N}}\overline{\nabla}\varphi,\mathcal{N}\rangle)-n\nu(\varphi)+\varphi H-\vert A\vert^2u+h_{ik,i}\langle\xi,e_k\rangle\nonumber\\
		=&-\overline{Ric}(\xi,\nu)-n\nu(\varphi)+\varphi H-\vert A\vert^2u+h_{ik,i}\langle\xi,e_k\rangle,
\end{align}
where in the last equality we used (\ref{curvature-eq2}). Now, by Codazzi equation 
	\begin{align}\label{suppfun-eq6}
		h_{ik,i}\langle\xi,e_k\rangle=&h_{ii,k}\langle\xi,e_k\rangle+\bar{R}_{\nu iki}\langle\xi,e_k\rangle\nonumber\\
		=&\langle\xi,\nabla
		H\rangle+\overline{Ric}(\xi,\nu)-u\overline{Ric}(\nu,\nu).
	\end{align}
Combining (\ref{suppfun-eq5}) with (\ref{suppfun-eq6}) we have 
	\begin{align}\label{suppfun-eq7}
		\Delta_gu=&\langle\xi,\nabla
		H\rangle-n\nu(\varphi)+\varphi H-\vert A\vert^2u-u\overline{Ric}(\nu,\nu).
	\end{align}
 Plugging this into (\ref{suppfun-eq0}), we obtain (\ref{ev-support fun}).
\end{proof}
\begin{proposition}
		Assume that $(M^{n+1},\bar{g})$ is a Riemannian manifold endowed with a complete conformal vector field satisfying the conditions (i) and (iii) in Theorem \ref{convergence-0}. Let $\Sigma_0$ be a closed hypersurface embedded in $M'$, then under the flow (\ref{flow}), the mean curvature $H$ evolves by
	\begin{align}\label{meancurv}
		\partial_tH=&u\Delta_gH+2\langle\nabla u,\nabla H\rangle+H\langle\xi,\nabla H\rangle+\varphi H^2-n\varphi\vert A\vert^2\nonumber\\
		&+n\varphi\left(\overline{Ric}(\mathcal{N},\mathcal{N})-\overline{Ric}(\nu,\nu)\right)+n\left(\langle\overline{\nabla}_{\nu}\overline{\nabla}\varphi,\nu\rangle-\langle\overline{\nabla}_{\mathcal{N}}\overline{\nabla}\varphi,\mathcal{N}\rangle\right).
	\end{align}
\end{proposition}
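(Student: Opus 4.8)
The plan is to differentiate the identity $H=g^{ij}h_{ij}$ along the flow and substitute the evolution equations already obtained. Using (\ref{metric1}) (so that $\partial_tg^{ij}=-2(n\varphi-uH)h^{ij}$ and hence $(\partial_tg^{ij})h_{ij}=-2(n\varphi-uH)\vert A\vert^2$) together with the trace of (\ref{second form1}) (in which the $h_{ik}h^k_j$ term traces to $\vert A\vert^2$ and the ambient curvature term to $\overline{Ric}(\nu,\nu)$), one obtains
\begin{align*}
\partial_tH=\Delta_g(uH-n\varphi)-(n\varphi-uH)\bigl(\vert A\vert^2+\overline{Ric}(\nu,\nu)\bigr).
\end{align*}
No Simons-type identity is needed at this stage.

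The next step is to expand $\Delta_g(uH)=u\Delta_gH+2\langle\nabla u,\nabla H\rangle+H\Delta_gu$ and to insert the already-established formula (\ref{suppfun-eq7}) for $\Delta_gu$; this feeds in the terms $H\langle\xi,\nabla H\rangle-nH\nu(\varphi)+\varphi H^2-\vert A\vert^2uH-uH\,\overline{Ric}(\nu,\nu)$. For the term $\Delta_g\varphi$ (the intrinsic Laplacian on $\Sigma$ of $\varphi|_\Sigma$) I would use the standard hypersurface identity $\Delta_g\varphi=\overline{\Delta}\varphi-\langle\overline{\nabla}_\nu\overline{\nabla}\varphi,\nu\rangle-H\nu(\varphi)$, so that the two $H\nu(\varphi)$ contributions cancel. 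After gathering the $\vert A\vert^2$ terms into $-n\varphi\vert A\vert^2$ and the $\overline{Ric}(\nu,\nu)$ terms into $-n\varphi\,\overline{Ric}(\nu,\nu)$, one is left with
\begin{align*}
\partial_tH=u\Delta_gH+2\langle\nabla u,\nabla H\rangle+H\langle\xi,\nabla H\rangle+\varphi H^2-n\varphi\vert A\vert^2-n\varphi\,\overline{Ric}(\nu,\nu)-n\overline{\Delta}\varphi+n\langle\overline{\nabla}_\nu\overline{\nabla}\varphi,\nu\rangle.
\end{align*}

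To finish I would eliminate $\overline{\Delta}\varphi$. Since each connected leaf of $\mathcal{F}(\xi)$ has constant mean curvature by condition (iii), the curvature identity (\ref{curvature-eq2}) is available; evaluating it at $X=\mathcal{N}$ and using $\overline{Ric}(\xi,\mathcal{N})=\vert\xi\vert\,\overline{Ric}(\mathcal{N},\mathcal{N})$ gives $\overline{\Delta}\varphi=\langle\overline{\nabla}_\mathcal{N}\overline{\nabla}\varphi,\mathcal{N}\rangle-\varphi\,\overline{Ric}(\mathcal{N},\mathcal{N})$. Substituting $-n\overline{\Delta}\varphi=n\varphi\,\overline{Ric}(\mathcal{N},\mathcal{N})-n\langle\overline{\nabla}_\mathcal{N}\overline{\nabla}\varphi,\mathcal{N}\rangle$ into the last display turns it into exactly (\ref{meancurv}).

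The whole argument is essentially bookkeeping once (\ref{second form1}), (\ref{suppfun-eq7}) and (\ref{curvature-eq2}) are in hand. The only place requiring a little care is passing from the ambient Hessian and Laplacian of $\varphi$ to intrinsic quantities on $\Sigma$ and then using the conformal curvature identity, together with the constant-mean-curvature hypothesis (iii), to remove $\overline{\Delta}\varphi$; without (iii) this last simplification — and hence the clean form of (\ref{meancurv}) — would not be available.
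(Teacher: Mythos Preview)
Your proposal is correct and follows essentially the same route as the paper: both start from $\partial_tH=-\Delta_g(n\varphi-uH)-(n\varphi-uH)(\vert A\vert^2+\overline{Ric}(\nu,\nu))$, expand $\Delta_g(uH)$, insert (\ref{suppfun-eq7}) for $\Delta_gu$, and use the hypersurface-Laplacian identity for $\varphi$ together with (\ref{curvature-eq2}) to convert the remaining $\overline{\Delta}\varphi$ term into the $\overline{Ric}(\mathcal{N},\mathcal{N})$ and $\langle\overline{\nabla}_\mathcal{N}\overline{\nabla}\varphi,\mathcal{N}\rangle$ contributions. The only cosmetic difference is that the paper applies (\ref{curvature-eq2}) directly inside the computation of $\Delta_g\varphi$ (its (\ref{divfun-eq2})), whereas you first pass to $\overline{\Delta}\varphi$ and then eliminate it as a separate step.
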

\begin{proof}
	By (\ref{metric1}) and (\ref{second form1}) we have
	\begin{align}\label{meancur-eq0}
		\partial_tH=&\partial_t(g^{ij}h_{ij})\nonumber\\
  =&-\Delta_g(n\varphi-uH)-(n\varphi-uH)\vert A\vert^2-(n\varphi-uH)\overline{Ric}(\nu,\nu)\nonumber\\
		=&u\Delta_gH+H\Delta_g u+2\langle\nabla u,\nabla H\rangle-n\Delta_g\varphi-(n\varphi-uH)\vert A\vert^2\nonumber\\
		&-(n\varphi-uH)\overline{Ric}(\nu,\nu). 
	\end{align}
Next, using (\ref{curvature-eq2}) we get
\begin{align}\label{divfun-eq2}
	\Delta_g\varphi=&e_i\langle\overline{\nabla}\varphi,e_i\rangle=\langle\overline{\nabla}_i\overline{\nabla}\varphi,e_i\rangle-H\nu(\varphi)\nonumber\\
	=&\overline{\Delta}\varphi-\langle\overline{\nabla}_{\nu}\overline{\nabla}\varphi,\nu\rangle-H\nu(\varphi)\nonumber\\
	=&-\varphi\overline{Ric}(\mathcal{N},\mathcal{N})+\langle\overline{\nabla}_{\mathcal{N}}\overline{\nabla}\varphi,\mathcal{N}\rangle-\langle\overline{\nabla}_{\nu}\overline{\nabla}\varphi,\nu\rangle-H\nu(\varphi).
\end{align}
Plugging (\ref{suppfun-eq7})  and (\ref{divfun-eq2}) into (\ref{meancur-eq0}) we have (\ref{meancurv}).
\end{proof}

Next, we are attempted to derive the uniform upper bound of the mean curvature $H$.
\begin{theorem}
	Assume that $(M^{n+1},\bar{g})$ is a Riemannian manifold endowed with a complete conformal vector field satisfying all the conditions in Theorem \ref{convergence-0}.
 Let $\Sigma_0$ be a star-shaped, closed hypersurface embedded in $M'$, then along the flow (\ref{flow}), there exists a uniform constant $C$ independent of the time $t$ such that 
	\begin{align*}
		\max_{t\geq 0}H\leq C.
	\end{align*}
\end{theorem}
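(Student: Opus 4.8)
The plan is to run a maximum principle argument on a well-chosen test quantity built from the mean curvature $H$ and the support function $u$. Since $u$ is already controlled from above along the flow (it is bounded by $|\xi|$, which is bounded because $\frac{|\xi|^2}{\varphi^2}$ is pinched by Proposition \ref{C0est-2}, so the evolving hypersurfaces stay in a fixed compact region), the real issue is to bound $H$ from above. A direct look at the evolution equation (\ref{meancurv}) shows the dangerous term is $\varphi H^2$, while the good term is $-n\varphi|A|^2 \le -\varphi H^2$ (since $|A|^2 \ge \frac{1}{n}H^2$); these two compete with the wrong constants when $n>1$, so one cannot bound $H$ by the maximum principle applied to $H$ alone. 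The standard remedy, following Guan--Li and Guan--Li--Wang, is to consider the ratio $\frac{H}{u}$ or equivalently a quantity like $Q = \frac{uH - n\varphi}{\text{(something positive)}}$; I would look at $\frac{H}{u}$ (or $H - \lambda u$ for a suitable constant $\lambda$, or $\frac{uH}{\text{lower bound of }u}$). The point is that at a spatial maximum of such a ratio the spatial gradient terms vanish, and the reaction terms reorganize so that the leading $H^2$ contribution acquires the favorable sign, because the $\vert A\vert^2 u^2$ term in (\ref{ev-support fun}) helps push $u$ up precisely when $H$ is large.

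Concretely, I would set $W = \frac{H}{u}$, compute $\partial_t W$ using (\ref{meancurv}) and (\ref{ev-support fun}) together with (\ref{metric1}), and check that $W$ satisfies a parabolic inequality of the form
\begin{align*}
	\partial_t W \le u\,\Delta_g W + \langle X, \nabla W\rangle + W\left(C_1 - C_2\, u\, W\right) + C_3
\end{align*}
for suitable vector field $X$ and uniform constants $C_1, C_2, C_3 > 0$, where the crucial negative term $-C_2 u W^2$ (coming from the interaction of $\varphi H^2$ in $\partial_t H$ with $\vert A\vert^2 u^2$ in $\partial_t u$, after using $\vert A\vert^2 \ge \frac{1}{n}H^2$) dominates for $W$ large. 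All curvature terms $\overline{Ric}(\mathcal N,\mathcal N)-\overline{Ric}(\nu,\nu)$, $\langle\overline{\nabla}_\nu\overline{\nabla}\varphi,\nu\rangle - \langle\overline{\nabla}_{\mathcal N}\overline{\nabla}\varphi,\mathcal N\rangle$, $\varphi$, $\nu(\varphi)$, and the Hessian of $\varphi$ are bounded on the compact region in which the flow is confined, so they contribute only to the constants $C_1, C_3$. Applying the maximum principle to $W$ at its spatial maximum then gives $\sup_\Sigma W \le \max\{\sup_{\Sigma_0} W,\ C\}$ for a uniform $C$, and combined with the upper bound on $u$ this yields $\max_t H \le C$.

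The main obstacle I anticipate is purely computational bookkeeping: propagating the large number of first-order terms (those involving $\nabla u$, $\nabla H$, $\langle \xi, e_i\rangle$, $e_i(\varphi)$) through $\partial_t \frac{H}{u}$ and verifying that at a maximum of $\frac{H}{u}$ they either vanish (gradient terms), can be absorbed into a transport term $\langle X,\nabla W\rangle$, or are controlled by the confinement of the hypersurface and hence bounded. One must also be careful that $u$ stays strictly positive so the ratio makes sense — but for the upper bound on $H$ this is not circular, since the lower bound on $u$ (proved separately in the next step of the paper) can be invoked, or alternatively one argues on the time interval where $u$ is positive and uses the a priori confinement; here I would simply quote the lower bound $u \ge c_0 > 0$ if it is available, or else run the argument with $H - \lambda u$ to sidestep the division. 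Once the parabolic inequality is assembled, the conclusion is immediate from the scalar maximum principle on the compact hypersurfaces $\Sigma(t)$.
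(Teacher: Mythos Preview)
Your diagnosis of why the maximum principle applied to $H$ alone fails is correct: since $n|A|^2\ge H^2$ is sharp at umbilic points, the reaction $\varphi H^2-n\varphi|A|^2$ in (\ref{meancurv}) is only nonpositive, so at a spatial maximum one gets $\partial_tH\le C$ and hence at best linear growth in $t$. However, the remedy you propose has a genuine circularity problem.

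Your test function $W=H/u$ produces, after the computation you sketch, a negative reaction of the form $-\tfrac{1}{n}H^3=-\tfrac{1}{n}u^3W^3$ (not $-C_2uW^2$, but the issue is the same): the coefficient of the dominant negative power carries a factor of $u$. At the spatial maximum of $W$ the resulting ODE inequality only forces $W$ down when $u$ is bounded below by a fixed positive constant; without that, $W_{\max}$ can blow up even while $H$ at that point stays moderate. You say the lower bound $u\ge c_0$ ``can be invoked'' from the next step of the paper, but in the paper's logical order this is exactly backwards: the proof of Proposition~\ref{C1-est} uses the already-established upper bound on $H$ to control the term $-2\varphi uH$ in (\ref{ev-support fun}) as $u\to 0$. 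So quoting $u\ge c_0$ here is circular. Your fallback $H-\lambda u$ does not escape the problem either: the helpful contribution you would extract from $-\lambda|A|^2u^2$ is of size $-\tfrac{\lambda}{n}u^2H^2$, which again degenerates as $u\to0$.

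The paper takes a different route precisely to avoid any lower bound on $u$. The test function is
\[
\phi \;=\; H \;+\; \frac{|\xi|^2}{\varphi^2},
\]
where the added piece is already $C^0$--controlled by Proposition~\ref{C0est-2}. Its evolution (\ref{C0est-eq1}) contributes, after combining with (\ref{meancurv}), the extra reaction terms
\[
-\,2\,\frac{\varphi^2-\xi(\varphi)}{\varphi^3}\,H\,(|\xi|^2-u^2)
\qquad\text{and}\qquad
-\,4\,\frac{\varphi^2-\xi(\varphi)}{\varphi^3}\,h_{ik}\langle\xi,e_i\rangle\langle\xi,e_k\rangle,
\]
whose sign comes from hypothesis~(ii), $\varphi^2-\xi(\varphi)>0$, rather than from any information on $u$. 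These do not give a clean $-cH^2$ by themselves, so the argument closes with a dichotomy on the smallest principal curvature $\lambda_1$: if $\tfrac{H}{2}+2\lambda_1\ge 0$ the displayed terms already make $\partial_t\phi\le 0$ at the maximum; if $\tfrac{H}{2}+2\lambda_1<0$ then $-\lambda_1>\tfrac{H}{4}$ and one uses
\[
n|A|^2-H^2 \;\ge\; (n+1)\lambda_1^2 - 2\lambda_1 H
\]
to produce a genuine $H^2$--order negative contribution from $-\varphi(n|A|^2-H^2)$. Throughout, only $u\ge 0$ (star-shapedness) and the compact confinement from Proposition~\ref{C0est-2} are used.
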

\begin{proof}
	Consider the test function 
	\begin{align*}
		\phi=H+\frac{\vert\xi\vert^2}{\varphi^2},
	\end{align*}
  from the evolution equations (\ref{meancurv}) and (\ref{C0est-eq1}) of $H$ and $\frac{\vert\xi\vert^2}{\varphi^2}$	we have
	\begin{align}\label{meancurthm-eq6}
		\partial_t\phi=&\partial_tH+\partial_t\frac{\vert\xi\vert^2}{\varphi^2}\nonumber\\
		=&u\Delta_g\phi+2\langle\nabla u,\nabla \phi\rangle+H\langle\xi,\nabla \phi\rangle-2\langle\nabla u,\nabla \frac{\vert\xi\vert^2}{\varphi^2}\rangle-H\langle\xi,\nabla \frac{\vert\xi\vert^2}{\varphi^2}\rangle\nonumber\\
  &+\varphi H^2-n\varphi\vert A\vert^2+n\varphi\left(\overline{Ric}(\mathcal{N},\mathcal{N})-\overline{Ric}(\nu,\nu)\right)\nonumber\\
		&+n\left(\langle\overline{\nabla}_{\nu}\overline{\nabla}\varphi,\nu\rangle-\langle\overline{\nabla}_{\mathcal{N}}\overline{\nabla}\varphi,\mathcal{N}\rangle\right)+4\frac{\varphi^2-\xi(\varphi)}{\varphi^4}e_i(\varphi)\langle\xi,e_i\rangle u\\
		&-2\frac{u}{\varphi^2\vert\xi\vert^2}\xi\left(\frac{\varphi^2-\xi(\varphi)}{\varphi}\right)(\vert\xi\vert^2-u^2).\nonumber
	\end{align}
	By (\ref{secconformal-eq6}) we have 
	\begin{align}\label{meancurthm-eq1}
		\nabla \frac{\vert\xi\vert^2}{\varphi^2}=2\frac{\varphi^2-\xi(\varphi)}{\varphi^3}\langle\xi,e_i\rangle e_i,
	\end{align}
 then
\begin{align}\label{meancurthm-eq4}
	-H\langle\xi,\nabla \frac{\vert\xi\vert^2}{\varphi^2}\rangle=&-2H\frac{\varphi^2-\xi(\varphi)}{\varphi^3}(\vert\xi\vert^2-u^2).
\end{align}
	Combining (\ref{meancurthm-eq1}) with (\ref{suppfun-eq1}) we obtain 
	\begin{align}\label{meancurthm-eq5}
		-2\langle\nabla u,\nabla \frac{\vert\xi\vert^2}{\varphi^2}\rangle=&-4\frac{\varphi^2-\xi(\varphi)}{\varphi^3}\langle\xi,e_i\rangle e_i(u)\nonumber\\
		=&-4\frac{\varphi^2-\xi(\varphi)}{\varphi^3}\langle\xi,e_i\rangle\left(u\frac{\langle\overline{\nabla}\varphi,e_i\rangle}{\varphi}-\frac{\langle\xi,e_i\rangle}{\varphi}\langle\overline{\nabla}\varphi,\nu\rangle+h_{ik}\langle\xi,e_k\rangle\right)\nonumber\\
		=&-4\frac{\varphi^2-\xi(\varphi)}{\varphi^4}e_i(\varphi)\langle\xi,e_i\rangle u+4\frac{\varphi^2-\xi(\varphi)}{\varphi^4}\nu(\varphi)(\vert\xi\vert^2-u^2)\\
		&-4\frac{\varphi^2-\xi(\varphi)}{\varphi^3}h_{ik}\langle\xi,e_i\rangle\langle\xi,e_k\rangle.\nonumber
	\end{align}
Thus, plugging (\ref{meancurthm-eq4}) and (\ref{meancurthm-eq5}) into (\ref{meancurthm-eq6}), we have
	\begin{align}\label{meancurthm-eq2}
		\partial_t\phi=&u\Delta_g\phi+2\langle\nabla u,\nabla \phi\rangle+H\langle\xi,\nabla \phi\rangle+\varphi H^2-n\varphi\vert A\vert^2\nonumber\\
  &-4\frac{\varphi^2-\xi(\varphi)}{\varphi^3}h_{ik}\langle\xi,e_i\rangle\langle\xi,e_k\rangle+n\varphi\left(\overline{Ric}(\mathcal{N},\mathcal{N})-\overline{Ric}(\nu,\nu)\right)\nonumber\\
		&+n\left(\langle\overline{\nabla}_{\nu}\overline{\nabla}\varphi,\nu\rangle-\langle\overline{\nabla}_{\mathcal{N}}\overline{\nabla}\varphi,\mathcal{N}\rangle\right)-2H\frac{\varphi^2-\xi(\varphi)}{\varphi^3}(\vert\xi\vert^2-u^2)\nonumber\\
		&-2\frac{u}{\varphi^2\vert\xi\vert^2}\xi\left(\frac{\varphi^2-\xi(\varphi)}{\varphi}\right)(\vert\xi\vert^2-u^2)+4\frac{\varphi^2-\xi(\varphi)}{\varphi^4}\nu(\varphi)(\vert\xi\vert^2-u^2).
	\end{align}
By the condition (iv) in Theorem \ref{convergence-0} we have
\begin{align*}
	\overline{Ric}(\mathcal{N},\mathcal{N})-\overline{Ric}(\nu,\nu)\leq 0.
\end{align*}
Note that by (\ref{curvature-eq1}), we obtain 
\begin{align}\label{meancurthm-eq3}
	\langle\overline{\nabla}_{\nu}\overline{\nabla}\varphi,\nu\rangle=-\varphi\bar{R}(\nu,\mathcal{N},\nu,\mathcal{N})+\frac{u^2}{\vert\xi\vert^2}\langle\overline{\nabla}_{\mathcal{N}}\overline{\nabla}\varphi,\mathcal{N}\rangle.
\end{align}
Plugging these into (\ref{meancurthm-eq2}), we get
\begin{align*}
	\partial_t\phi\leq& u\Delta_g\phi+2\langle\nabla u,\nabla \phi\rangle+H\langle\xi,\nabla \phi\rangle-\varphi(n\vert A\vert^2-H^2)\\
 &-4\frac{\varphi^2-\xi(\varphi)}{\varphi^3}h_{ik}\langle\xi,e_i\rangle\langle\xi,e_k\rangle-n\langle\overline{\nabla}_{\mathcal{N}}\overline{\nabla}\varphi,\mathcal{N}\rangle(1-\frac{u^2}{\vert\xi\vert^2})\\
	&-n\varphi\bar{R}(\nu,\mathcal{N},\nu,\mathcal{N})-2H\frac{\varphi^2-\xi(\varphi)}{\varphi^3}(\vert\xi\vert^2-u^2)\\
	&-2\frac{u}{\varphi^2\vert\xi\vert^2}\xi\left(\frac{\varphi^2-\xi(\varphi)}{\varphi}\right)(\vert\xi\vert^2-u^2)+4\frac{\varphi^2-\xi(\varphi)}{\varphi^4}\nu(\varphi)(\vert\xi\vert^2-u^2).
\end{align*}
	At the maximum point $p$ of $\phi$, assume that $\lambda_1$ is the smallest principle curvature, then
	\begin{align*}
		h_{ik}\langle\xi,e_i\rangle\langle\xi,e_k\rangle\geq \lambda_1(\vert\xi\vert^2-u^2).
	\end{align*}
Moreover, since $\Sigma_t$ is contained in a compact subset by Remark \ref{C0remark} , we know that $\overline{\nabla}\varphi$, $\overline{\nabla}^2\varphi$ and the sectional curvature of $M$ are bounded. Meanwhile, there are uniform positive lower bounds for $\varphi$ and $\varphi^2-\xi(\varphi)$. Thus, applying Proposition \ref{C0est-2}, at $p$ we have
	\begin{align*}
		\partial_t\phi-u\Delta_g\phi\leq &-\varphi(n\vert A\vert^2-H^2)-2\frac{\varphi^2-\xi(\varphi)}{\varphi^3}\left(H+2\lambda_1-K_1\right)(\vert\xi\vert^2-u^2),
	\end{align*}
	where $K_1$ is a uniform constant independent of the time $t$. Without loss of generality, we assume $\frac{H}{2}-K_1\geq 0$, otherwise the proof is done. This reduces the above inequality to
	\begin{align*}
		\partial_t\phi-u\Delta_g\phi\leq &-\varphi(n\vert A\vert^2-H^2)-2\frac{\varphi^2-\xi(\varphi)}{\varphi^3}\left(\frac{H}{2}+2\lambda_1\right)(\vert\xi\vert^2-u^2).
	\end{align*}
Now, we consider two different cases.
	\item[Case 1.] Suppose that $\frac{H}{2}+2\lambda_1\geq 0$ at $p$. Then
	\begin{align*}
		\partial_t\phi-u\Delta_g\phi\leq 0,
	\end{align*}
	and by maximum principle we see that $\phi$ is bounded above, and so is $H$.
	\item[Case 2.] Suppose that $\frac{H}{2}+2\lambda_1<0$ at $p$. Let $\{\lambda_2,\cdots,\lambda_n\}$ be  other principle curvatures at the point $p$, $H_*=\lambda_2+\cdots+\lambda_n$ and $\vert A_*\vert^2=\lambda_2^2+\cdots+\lambda_n^2$. Then 
	\begin{align*}
		-\lambda_1\geq \frac{H}{4}
	\end{align*}
	and
	\begin{align*}
		n\vert A\vert^2-H^2=&n\vert A_*\vert^2-H_*^2-2\lambda_1H+(n+1)\lambda_1^2\\
		\geq& (n+1)\lambda_1^2-2\lambda_1H.
	\end{align*}
	Recall that there are positive lower bounds for $\varphi$ and  $\varphi^2-\xi(\varphi)$, and $\frac{\vert\xi\vert^2-u^2}{\varphi^2}$ has a uniform upper bound by Proposition \ref{C0est-2}. These yield that there are uniform positive constant $K_2$ and $K_3$ such that 
	\begin{align*}
		\partial_t\phi-u\Delta_g\phi\leq
		&-(n+1)K_2\lambda^2_1-K_3H+2\lambda_1(K_2H-2K_3).
	\end{align*}
	We assume that $H\geq\frac{K_3}{2K_2}>0$, otherwise the proof is done. Hence $-\lambda_1\geq 0$ and then
	\begin{align*}
		\partial_t\phi-u\Delta_g\phi\leq 0.
	\end{align*}
	By the maximum principle, we conclude that $\phi$ has a uniform upper bound, and so is $H$.
\end{proof}

By the uniform upper bound of $H$ and the uniform $C^0$ estimate (Proposition \ref{C0est-2}), the terms on the right hand of (\ref{ev-support fun}) tend to zero as $u\to 0$, except the term $n(\varphi^2-\xi(\varphi))$. Thus by the maximum principle we have shown the uniform positivity of the support function $u$.
\begin{proposition}\label{C1-est}
	Assume that $(M^{n+1},\bar{g})$ is a Riemannian manifold endowed with a complete conformal vector field satisfying all the conditions in Theorem \ref{convergence-0}. Let $\Sigma_0$ be a star-shaped, closed hypersurface embedded in $M'$.
	Then along the flow (\ref{flow}), there exists a constant $\epsilon>0$ independent of the time $t$ such that 
	\begin{align*}
		\min_{t\geq 0}u\geq \epsilon.
	\end{align*}
\end{proposition}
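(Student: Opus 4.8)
The plan is to apply the parabolic maximum principle directly to the evolution equation (\ref{ev-support fun}) for $u$, using that on its right-hand side every term except the zeroth-order term $n(\varphi^{2}-\xi(\varphi))$ is either nonnegative at a spatial minimum of $u$ or carries an explicit factor of $u$.

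First I would set $u_{\min}(t)=\min_{\Sigma(t)}u$, a locally Lipschitz function of $t$, and evaluate at a point realizing this minimum, where $\nabla u=0$ and $\Delta_{g}u\ge 0$. Then in (\ref{ev-support fun}) the terms $u\Delta_{g}u$ and $|A|^{2}u^{2}$ are nonnegative and $H\langle\xi,\nabla u\rangle$ vanishes, so that, in the barrier sense,
\[
\frac{d}{dt}u_{\min}\ \ge\ n\bigl(\varphi^{2}-\xi(\varphi)\bigr)-2\varphi H\,u_{\min}+2n\,\nu(\varphi)\,u_{\min}+\overline{Ric}(\nu,\nu)\,u_{\min}^{2}.
\]

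Next I would bring in the two a priori estimates already in hand. By Proposition \ref{C0est-2} and Remark \ref{C0remark}, the evolving hypersurfaces stay in a fixed compact region of $M'$ enclosed by two leaves of $\mathcal{F}(\xi)$; on that region $\varphi$, $\overline{\nabla}\varphi$ and $\overline{Ric}$ are bounded, while $\varphi>0$ and $\varphi^{2}-\xi(\varphi)>0$ (condition (ii)) have uniform positive lower bounds. Combined with the uniform upper bound $H\le C$ from the preceding theorem --- note that only an \emph{upper} bound for $H$ is needed, since the term $-2\varphi H u_{\min}$ is harmful only where $H>0$ --- one obtains constants $c_{0}>0$ and $C_{1}>0$, independent of $t$, with
\[
\frac{d}{dt}u_{\min}\ \ge\ nc_{0}-C_{1}u_{\min}\qquad\text{whenever }u_{\min}\le 1 .
\]
This differential inequality shows that $u_{\min}$ can never drop below $\min\{u_{\min}(0),\,nc_{0}/C_{1}\}$: wherever $u_{\min}<nc_{0}/C_{1}$ its time derivative is strictly positive, and since $\Sigma_{0}$ is compact and star-shaped we have $u_{\min}(0)>0$. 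Taking $\epsilon:=\min\{u_{\min}(0),\,nc_{0}/C_{1}\}$ then completes the proof.

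I expect no serious obstacle once the mean curvature bound and the $C^{0}$ estimate are available; the only delicate point is the bookkeeping that isolates $n(\varphi^{2}-\xi(\varphi))$ as the sole term not absorbed by a power of $u$ --- which is precisely where condition (ii) is used --- together with the observation that $H$ enters the estimate only through its upper bound, so that its (a priori possibly negative) sign causes no trouble.
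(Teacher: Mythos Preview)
Your argument is correct and is exactly the approach taken in the paper: evaluate the evolution equation (\ref{ev-support fun}) at a spatial minimum of $u$, observe that every term on the right either vanishes, is nonnegative, or carries a factor of $u$ (and hence can be controlled using the $C^{0}$ estimate of Proposition~\ref{C0est-2} and the upper bound on $H$), leaving the strictly positive term $n(\varphi^{2}-\xi(\varphi))$ to force $u_{\min}$ upward. The paper states this in a single sentence; your write-up simply makes the differential inequality and the role of the $H$ upper bound explicit.
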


Now, we are prepared to prove the long time existence and convergence of the volume preserving flow (\ref{flow}).
\begin{theorem}\label{convergence}
	Assume that  $(M^{n+1},\bar{g})$  is a Riemannian manifold endowed with a complete conformal vector field satisfying all the conditions in Theorem \ref{convergence-0}. Let $\Sigma_0$ be a star-shaped, closed hypersurface embedded in $M'$. Then the evolution equation (\ref{flow}) with $\Sigma_0$ as a initial data has a smooth solution for $t\in[0,+\infty)$. Moreover, the solution hypersurfaces converge to a  totally umbilical hypersurface $\Sigma_{\infty}$ whose unit normal vector field $\nu_{\infty}$ attains least Ricci curvature on $M'$, that means
	\begin{align*}
		\overline{Ric}(\nu_{\infty},\nu_{\infty})=\overline{Ric}(\mathcal{N},\mathcal{N}).
	\end{align*}
\end{theorem}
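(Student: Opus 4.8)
The plan is to follow the standard parabolic-flow strategy: establish enough a priori estimates to get long-time existence by a standard continuation argument, then analyze the limit via the monotonicity already proved in Theorem \ref{monotonethm}. We are given the $C^0$ estimate (Proposition \ref{C0est-2}), the upper bound $H \le C$ (the theorem just proved), and the lower bound $u \ge \epsilon$ (Proposition \ref{C1-est}). The first task is to upgrade these to full regularity. Since $u \ge \epsilon > 0$ and $H \le C$, the flow is uniformly parabolic. Writing the evolving hypersurfaces as graphs over a fixed leaf of $\mathcal{F}(\xi)$ using the function $\rho = \frac{|\xi|^2}{\varphi^2}$ as a "radial" coordinate — justified by Remark (4) and the identity $\overline{\nabla}\rho = 2\frac{\varphi^2-\xi(\varphi)}{\varphi^3}\xi$ — one reduces the flow to a scalar parabolic PDE for $\rho$ restricted to $\Sigma(t)$, which already has uniform $C^0$ and (via $u \ge \epsilon$) uniform $C^1$ bounds. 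I would then obtain a lower bound on the principal curvatures: either directly from an evolution equation for the smallest principal curvature $\lambda_1$ (arguing as in \cite{Guan-Li-Wang2019}), or by bounding $|A|^2$ using the evolution of $H$ together with the curvature pinching; combined with $H \le C$ this gives $|A| \le C'$. Standard Krylov–Safonov and Schauder theory (as in \cite{huisken1984flow,guan-li15,Guan-Li-Wang2019}) then yield uniform $C^\infty$ bounds on $\Sigma(t)$, and a standard continuation argument gives existence for all $t \in [0,\infty)$.

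For the convergence, I would exploit the monotonicity $A'(t) \le \frac{n}{n-1}\int_\Sigma (\overline{Ric}(\mathcal{N},\mathcal{N}) - \overline{Ric}(\nu,\nu)) u\, d\mu \le 0$ from \eqref{mono-eq1}. Since $A(t)$ is monotone and bounded below by $0$, it converges, and integrating the differential inequality over $[0,\infty)$ forces
\begin{align*}
\int_0^\infty \int_{\Sigma(t)} \left( \frac{2n}{n-1}\sigma_2 - H^2 \right)(-u)\,d\mu\,dt + \int_0^\infty \int_{\Sigma(t)} \frac{n}{n-1}\big(\overline{Ric}(\nu,\nu) - \overline{Ric}(\mathcal{N},\mathcal{N})\big) u\,d\mu\,dt < \infty.
\end{align*}
Both integrands are nonnegative (Newton–Maclaurin $H^2 \ge \frac{2n}{n-1}\sigma_2$, condition (iv), and $u \ge \epsilon > 0$). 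With the uniform $C^\infty$ bounds in hand, one extracts a subsequential smooth limit $\Sigma_\infty$, and the finiteness of the time integrals forces, along this subsequence, $H^2 - \frac{2n}{n-1}\sigma_2 \to 0$ and $\overline{Ric}(\nu,\nu) - \overline{Ric}(\mathcal{N},\mathcal{N}) \to 0$ in an integrated sense, hence (by $C^\infty$ convergence and $u\ge\epsilon$) pointwise on $\Sigma_\infty$. The condition $H^2 = \frac{2n}{n-1}\sigma_2$ is precisely the equality case of Newton–Maclaurin, which forces all principal curvatures equal, i.e. $\Sigma_\infty$ is totally umbilical; and $\overline{Ric}(\nu_\infty,\nu_\infty) = \overline{Ric}(\mathcal{N},\mathcal{N})$ is the stated Ricci condition.

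To promote subsequential convergence to full convergence, I would show the limit is unique — for instance by noting that a totally umbilical hypersurface on which $\overline{Ric}(\nu,\nu)$ attains the minimum is (under the structural hypotheses, cf. Proposition \ref{umbilical} and the foliation structure) rigid, or by a Łojasiewicz-type / monotonicity argument using that $\rho = \frac{|\xi|^2}{\varphi^2}$ is controlled and its oscillation on $\Sigma(t)$ can be shown to decay. Alternatively, since the enclosed volume is fixed and $\Sigma_\infty$ is totally umbilical with the Ricci property, one identifies it with a definite leaf-like hypersurface and argues the whole flow converges to it; this is the route used in \cite{guan-li15,Guan-Li-Wang2019}.

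\textbf{Main obstacle.} The hard part is the curvature estimate: unlike \cite{guan-li15,Guan-Li-Wang2019}, there is no globally defined radial function, so bounding $|A|$ cannot rely on differentiating a distance function twice. As flagged in Remark (4), the resolution is that the second fundamental form of $\Sigma(t)$ is controlled by second derivatives of $\rho = \frac{|\xi|^2}{\varphi^2}$ along $\Sigma(t)$ — since $\overline{\nabla}\rho$ is parallel to $\xi$, the tangential Hessian of $\rho$ encodes $h_{ij}$ up to terms controlled by $u$, $\varphi$, and ambient curvature — so the whole curvature estimate is bootstrapped from $C^2$ bounds on the single scalar quantity $\rho$, which in turn come from its evolution equation \eqref{C0est-eq1} and the maximum principle. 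Making this reduction precise, and checking that the "error terms" (involving $\overline{\nabla}\varphi$, $\overline{\nabla}^2\varphi$, and the sectional curvature, all bounded on the compact region by Remark \ref{C0remark}) do not spoil the parabolic structure, is the technical heart of the argument.
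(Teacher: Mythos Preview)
Your proposal is essentially correct and follows the same route as the paper, and in particular your ``Main obstacle'' paragraph is exactly the paper's argument: the paper applies standard parabolic theory directly to the scalar evolution equation \eqref{C0est-eq1} for $\rho=\tfrac{|\xi|^2}{\varphi^2}$ (uniformly parabolic since $u\ge\epsilon$), obtains $C^\infty$ regularity of $\rho$, and then reads off $h_{ij}$ from the explicit Hessian formula
\[
\nabla_j\nabla_i\rho=\text{(bounded terms)}-2\tfrac{\varphi^2-\xi(\varphi)}{\varphi^3}\,u\,h_{ij},
\]
using $\varphi^2-\xi(\varphi)>0$ and $u\ge\epsilon$ to solve for $h_{ij}$. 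Your intermediate suggestion of bounding $\lambda_1$ or $|A|^2$ via their own evolution equations is therefore an unnecessary detour; the paper bypasses any direct curvature maximum-principle argument entirely and gets $|A|$ (and all higher derivatives) for free from the scalar regularity of $\rho$. The limit analysis---integrating $A'(t)\le 0$, forcing the two nonnegative integrands to vanish on $\Sigma_\infty$, and invoking the equality case of Newton--Maclaurin---is exactly what the paper does, though the paper is terser about subsequential versus full convergence than you are.
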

\begin{proof}
	Since the evolution equation (\ref{C0est-eq1}) of the function $\frac{\vert\xi\vert^2}{\varphi^2}$ is a quasilinear parabolic equation, it follows from Proposition \ref{C0est-2} and Proposition \ref{C1-est} that the equation is uniformly parabolic. Then the regularity of function $\frac{\vert\xi\vert^2}{\varphi^2}$ follows from the standard parabolic theory. Observing that
	\begin{align*}
		\nabla_i\frac{\vert\xi\vert^2}{\varphi^2} =2\frac{\varphi^2-\xi(\varphi)}{\varphi^3}\langle\xi,e_i\rangle,
	\end{align*} 
and 
\begin{align*}
	\nabla_j\nabla_i\frac{\vert\xi\vert^2}{\varphi^2} =&\frac{2}{\varphi^2\vert\xi\vert^2}\xi\left(\frac{\varphi^2-\xi(\varphi)}{\varphi}\right)\langle\xi,e_i\rangle\langle\xi,e_j\rangle -2\frac{\varphi^2-\xi(\varphi)}{\varphi^4}e_i(\varphi)\langle\xi,e_j\rangle \\
	&-2\frac{\varphi^2-\xi(\varphi)}{\varphi^4}e_j(\varphi)\langle\xi,e_i\rangle+2\frac{\varphi^2-\xi(\varphi)}{\varphi^2}g_{ij}-2\frac{\varphi^2-\xi(\varphi)}{\varphi^3}uh_{ij}.
\end{align*}
Since $\varphi>0$, $\varphi^2-\xi(\varphi)>0$ and $u\geq\epsilon$, the regularity of $\frac{\vert\xi\vert^2}{\varphi^2}$ implies the uniform estimates of the second fundamental form and its higher order derivatives. Then we can conclude the long-time existence and convergence of the flow (\ref{flow}). We denote the limit hypersurface by $\Sigma_{\infty}$, then $\Sigma_{\infty}\in M'$. Recall that along the flow,
\begin{align*}
		A'(t)
	&=\int_{\Sigma}\left(\frac{2n}{n-1}\sigma_2-H^2+\frac{n}{n-1}(\overline{Ric}(\mathcal{N},\mathcal{N})-\overline{Ric}(\nu,\nu))\right)ud\mu.
\end{align*}
Since
\begin{align*}
	\int_0^{\infty}A'(t)d\mu=A(\Sigma_{\infty})-A(\Sigma_0)<+\infty,
\end{align*}
this yields $\lim_{t\to+\infty} A'(t)=0$ by $A'(t)\leq 0$. Thus,
\begin{align*}
	\int_{\Sigma_{\infty}}\left(\frac{2n}{n-1}\sigma_2-H^2+\frac{n}{n-1}(\overline{Ric}(\mathcal{N},\mathcal{N})-\overline{Ric}(\nu_{\infty},\nu_{\infty}))\right)ud\mu=0.
\end{align*}
Therefore, on $\Sigma_{\infty}$ we have:
\begin{align*}
	\overline{Ric}(\mathcal{N},\mathcal{N})=\overline{Ric}(\nu_{\infty},\nu_{\infty})
\end{align*}
and 
\begin{align*}
	\frac{2n}{n-1}\sigma_2=H^2,
\end{align*} 
the equality holds in the Newton-Maclaurin inequality implies that $\Sigma_{\infty}$ is totally umbilical.
\end{proof}
Then,  the following corollary holds evidently.
\begin{corollary}\label{strictricci}
	Under the same conditions as that in Theorem \ref{convergence}, if we assume extra that our hypothesis on the Ricci curvature of $M$ is strict, that is, the $\mathcal{N}$ direction is the only direction of least Ricci curvature. Then, $\Sigma_{\infty}$ is a leaf of the foliation $\mathcal{F}(\xi)$, i.e it's a level set hypersurface of the function $\frac{\vert\xi\vert}{\varphi}$.
\end{corollary}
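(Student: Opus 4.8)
The plan is to read off from Theorem \ref{convergence} the two pointwise facts it establishes on the limit hypersurface $\Sigma_{\infty}\subset M'$: it is totally umbilical, and $\overline{Ric}(\nu_{\infty},\nu_{\infty})=\overline{Ric}(\mathcal{N},\mathcal{N})$ at every point of $\Sigma_{\infty}$. The strict Ricci hypothesis means exactly that, at each point of $M'$, the unit vector $\mathcal{N}$ is the \emph{unique} (up to sign) minimizer of $X\mapsto\overline{Ric}(X,X)$ over unit vectors. Hence the equality $\overline{Ric}(\nu_{\infty},\nu_{\infty})=\overline{Ric}(\mathcal{N},\mathcal{N})$ forces $\nu_{\infty}=\pm\mathcal{N}$ pointwise on $\Sigma_{\infty}$.

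Next I would pin down the sign. Along the flow the support function stays uniformly positive by Proposition \ref{C1-est} ($u\geq\epsilon>0$), so in the limit $u=\langle\xi,\nu_{\infty}\rangle\geq\epsilon>0$ on $\Sigma_{\infty}$; on the other hand $\langle\xi,\mathcal{N}\rangle=\vert\xi\vert>0$ on $M'$. The sign ambiguity is therefore resolved in favor of $\nu_{\infty}=\mathcal{N}$ everywhere, and in particular $u=\vert\xi\vert$ on $\Sigma_{\infty}$. Consequently $T_p\Sigma_{\infty}=\mathcal{D}(p)=\{v\in T_pM:\langle\xi(p),v\rangle=0\}$ for each $p$, i.e. $\Sigma_{\infty}$ is everywhere tangent to the distribution $\mathcal{D}$; being a connected closed hypersurface tangent to the integrable distribution $\mathcal{D}$, it is an integral leaf of the foliation $\mathcal{F}(\xi)$.

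Finally, to display $\Sigma_{\infty}$ as a level set, I would invoke (\ref{secconformal-eq6}): $\overline{\nabla}\frac{\vert\xi\vert^2}{\varphi^2}=2\frac{\varphi^2-\xi(\varphi)}{\varphi^3}\xi$ is proportional to $\xi$, hence proportional to $\nu_{\infty}=\mathcal{N}$, so its tangential component along $\Sigma_{\infty}$ vanishes; thus $\frac{\vert\xi\vert^2}{\varphi^2}$ is constant on the connected hypersurface $\Sigma_{\infty}$, which means $\Sigma_{\infty}=\{\frac{\vert\xi\vert}{\varphi}=r\}$ for some $r>0$ (alternatively one simply quotes condition (iii) of Theorem \ref{convergence-0}, which already asserts that every connected leaf of $\mathcal{F}(\xi)$ is such a level set). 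There is no genuine obstacle in this corollary beyond Theorem \ref{convergence} itself; the only point deserving a line of care is the orientation bookkeeping that upgrades $\nu_{\infty}=\pm\mathcal{N}$ to $\nu_{\infty}=\mathcal{N}$, and this is exactly where the preserved star-shapedness $u>0$ is used.
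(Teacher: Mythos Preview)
Your argument is correct and is precisely the reasoning the paper has in mind: the authors state the corollary ``holds evidently'' without further proof, relying on the fact that Theorem \ref{convergence} gives $\overline{Ric}(\nu_{\infty},\nu_{\infty})=\overline{Ric}(\mathcal{N},\mathcal{N})$, so strictness forces $\nu_{\infty}\parallel\mathcal{N}$ and hence $\Sigma_{\infty}$ is a leaf. One small remark: the sign discussion you include is more than is strictly needed, since $T_p\Sigma_{\infty}=\mathcal{D}(p)$ follows already from $\nu_{\infty}=\pm\mathcal{N}$ regardless of sign; but your use of the preserved lower bound $u\geq\epsilon$ to fix the orientation is nonetheless correct and makes the picture clearer.
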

Recall Theorem \ref{monotonethm}, the area of evolving hypersurface along flow (\ref{flow}) is non-increasing and the enclosed volume is preserved. This proves the following isoperimetric inequality:
\begin{theorem}
		Assume that $(M^{n+1},\bar{g})$ is a Riemannian manifold endowed with a complete conformal vector field satisfying all the conditions in Theorem \ref{convergence-0}. Moreover, we assume extra that our hypothesis on the Ricci curvature of $M$ is strict. Let $\Sigma\subset M'$ be a star-shaped, closed hypersurface which satisfies $\min_{\Sigma}\frac{\vert\xi\vert}{\varphi}>r_1$, and $\Omega$ be the bounded domain enclosed by $\Sigma$ and $S(r_1)$, then 
		\begin{align}\label{isoineq-1}
			Area(\Sigma)\geq Area(S(r^*)),
		\end{align}
	where $r^*$ is the unique real number determined by $\text{Vol}(B(r^*))=\text{Vol}(\Omega)$, and $B(r^*)$ is the domain enclosed by $S(r_1)$ and $S(r^*)$. Moreover, "$=$" attains in (\ref{isoineq-1}) if and only if $\Sigma=S(r^*)$.
\end{theorem}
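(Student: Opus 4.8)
The plan is to obtain the statement directly from the convergence and monotonicity results already established, by running the volume preserving flow (\ref{flow}) with the given hypersurface $\Sigma$ as the initial datum $\Sigma_0$. First I would invoke Theorem \ref{convergence}: since $\Sigma$ is a star-shaped, closed hypersurface embedded in $M'$, the flow (\ref{flow}) exists for all $t\in[0,+\infty)$ and converges smoothly as $t\to+\infty$ to a totally umbilical hypersurface $\Sigma_\infty\subset M'$ with $\overline{Ric}(\nu_\infty,\nu_\infty)=\overline{Ric}(\mathcal{N},\mathcal{N})$. Because we now assume the Ricci hypothesis (iv) is strict, Corollary \ref{strictricci} applies and identifies $\Sigma_\infty$ as a leaf of $\mathcal{F}(\xi)$, i.e. $\Sigma_\infty=S(r^*)$ for some constant $r^*$. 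I would also record, from Proposition \ref{C0est-2} together with the fact that $\frac{\vert\xi\vert^2}{\varphi^2}$ is strictly increasing along $\xi$, that $\frac{\vert\xi\vert}{\varphi}$ stays pinched between its initial minimum and maximum along the flow; letting $t\to+\infty$ gives $r^*\geq\min_{\Sigma}\frac{\vert\xi\vert}{\varphi}>r_1$, so that every $\Sigma(t)$ and also $S(r^*)$ lies strictly over $S(r_1)$, the enclosed domain $\Omega(t)$ between $\Sigma(t)$ and $S(r_1)$ is well defined with $\Omega(0)=\Omega$, and $\Omega(t)\to B(r^*)$.

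Next I would apply Theorem \ref{monotonethm}: along the flow $\text{Vol}(\Omega(t))$ is constant and $Area(\Sigma(t))$ is non-increasing. Letting $t\to+\infty$ and using the smooth convergence $\Sigma(t)\to S(r^*)$, $\Omega(t)\to B(r^*)$ yields $\text{Vol}(B(r^*))=\text{Vol}(\Omega)$ and $Area(\Sigma)\geq Area(S(r^*))$, which is (\ref{isoineq-1}). To see that $r^*$ is the unique number with $\text{Vol}(B(r^*))=\text{Vol}(\Omega)$, I would note that $r\mapsto\text{Vol}(B(r))$ is continuous and, because the level sets $S(r)$ move strictly outward as $r$ increases (again by monotonicity of $\frac{\vert\xi\vert^2}{\varphi^2}$ along $\xi$), strictly increasing.

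For the rigidity statement, if $\Sigma=S(r^*)$ then (\ref{isoineq-1}) is trivially an equality. Conversely, suppose $Area(\Sigma)=Area(S(r^*))$; then $Area(\Sigma(t))$ is constant on $[0,+\infty)$, hence $A'(t)\equiv0$. Feeding this into the identity
\begin{align*}
A'(t)=\int_{\Sigma(t)}\left(\frac{2n}{n-1}\sigma_2-H^2+\frac{n}{n-1}\big(\overline{Ric}(\mathcal{N},\mathcal{N})-\overline{Ric}(\nu,\nu)\big)\right)u\,d\mu
\end{align*}
from the proof of Theorem \ref{monotonethm}, and using $u>0$ (Proposition \ref{C1-est}), the Newton-Maclaurin inequality $H^2\geq\frac{2n}{n-1}\sigma_2$, and $\overline{Ric}(\mathcal{N},\mathcal{N})-\overline{Ric}(\nu,\nu)\leq0$, the integrand is pointwise nonpositive, so it vanishes identically. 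Strictness of (iv) then forces $\nu=\pm\mathcal{N}$, and $u=\langle\xi,\nu\rangle>0$ forces $\nu=\mathcal{N}$, on every $\Sigma(t)$. In particular $\Sigma=\Sigma(0)$ is everywhere tangent to the distribution $\mathcal{D}$, hence is a leaf of $\mathcal{F}(\xi)$, i.e. $\Sigma=S(r)$ for some $r$; the volume constraint from the previous paragraph then gives $r=r^*$, so $\Sigma=S(r^*)$.

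The heavy ingredients --- long-time existence and smooth convergence of the flow, the $C^0$ bound on $\frac{\vert\xi\vert^2}{\varphi^2}$, and the monotonicity of area and volume --- are all already in place, so the main content here is bookkeeping: confirming the evolving domain over $S(r_1)$ is well defined throughout and that $\Omega(t)\to B(r^*)$ (which follows from the smooth convergence of $\Sigma(t)$), together with the equality analysis. I expect the only mildly delicate point to be the rigidity case, namely justifying that $A'(t)\equiv0$ genuinely forces the pointwise vanishing of all three nonpositive contributions and hence $\nu\equiv\mathcal{N}$; everything else is an essentially immediate transcription of Theorems \ref{convergence} and \ref{monotonethm} and Corollary \ref{strictricci}.
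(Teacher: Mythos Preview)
Your proposal is correct and follows essentially the same approach as the paper: run the flow, invoke Theorem \ref{convergence} and Corollary \ref{strictricci} to get convergence to a leaf $S(r^*)$, and use Theorem \ref{monotonethm} for volume preservation and area monotonicity to obtain the inequality. For rigidity, the paper argues by contradiction (if $\Sigma$ is not a leaf then strictness of (iv) forces $A'(0)<0$ via (\ref{mono-eq1}), contradicting $Area(\Sigma)=Area(S(r^*))$), while you argue directly that $A'(t)\equiv0$ forces pointwise vanishing of the nonpositive integrand and hence $\nu=\mathcal{N}$; these are the same argument in contrapositive form, and your version is in fact more carefully spelled out than the paper's.
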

\begin{proof}
	By Theorem \ref{monotonethm} and Corollary \ref{strictricci}, the inequality (\ref{isoineq-1}) holds.
	
	 On the other hand, if $\Sigma$ is a star-shaped, closed hypersurface which satisfies $$Area(\Sigma)=Area(S(r^*)),$$
	  but $\Sigma$ is not a leaf of the foliation $\mathcal{F}(\xi)$. Then $\overline{Ric}(\nu,\nu)<\overline{Ric}(\mathcal{N},\mathcal{N})$ on $\Sigma$ by the assumption. Along the flow (\ref{flow}), the formula (\ref{mono-eq1}) implies $A'(0)<0$, this is a contradiction to $Area(\Sigma)=Area(S(r^*))$.
\end{proof}

\section{A solution to the isoperimetric problem}\label{sol to isoper}

In this section, we will prove that the hypersurface $\Sigma_{\infty}$ in Theorem \ref{convergence} is a leaf of the foliation $\mathcal{F}(\xi)$ even without the extra assumption on the Ricci curvature of $M$ in Corollary \ref{strictricci}. As a conclusion, we can show that the level sets of the function $\frac{\vert\xi\vert}{\varphi}$ are solutions of the isoperimetric problem at this time. To achieve this end, we need to assume that the sectional curvature $K(X,\xi)=\frac{\bar{R}(X,\xi,X,\xi)}{\vert X\vert^2\vert\xi\vert^2-\langle X,\xi\rangle^2}$ on $M'$ satisfies 
	\begin{align}\label{limit-eq0}
	K(X,\xi)\geq -\frac{\varphi^2}{\vert\xi\vert^2},\ \forall X\in TM,\ X\neq\xi.
\end{align}
For any compact leaf $P$ of the foliation $\mathcal{F}(\xi)$ on $M'$, using the local normal coordinates $\{\tilde{e}_1,\cdots,\tilde{e}_n\}$ on $P$, by the Gauss equation we have
\begin{align*}
	\tilde{R}_{ijk\ell}=\bar{R}_{ijk\ell}+\frac{\varphi^2}{\vert\xi\vert^2}(\bar{g}_{ik}\bar{g}_{j\ell}-\bar{g}_{i\ell}\bar{g}_{jk}),
\end{align*}
where $\tilde{R}_{ijk\ell}$ is the components of the curvature tensor of $P$. Then, we get the Ricci curvature of $P$ as follow:
\begin{align*}
	\tilde{Ric}(\tilde{e}_i,\tilde{e}_i)=\overline{Ric}(\tilde{e}_i,\tilde{e}_i)-\bar{R}(\tilde{e}_i,\mathcal{N},\tilde{e}_i,\mathcal{N})+(n-1)\frac{\varphi^2}{\vert\xi\vert^2}.
\end{align*}
Therefore, using the condition (iv) in Theorem \ref{convergence-0} and (\ref{limit-eq0}) we have
\begin{align*}
	\tilde{Ric}(\tilde{e}_i,\tilde{e}_i)\geq&\overline{Ric}(\mathcal{N},\mathcal{N})-\bar{R}(\tilde{e}_i,\mathcal{N},\tilde{e}_i,\mathcal{N})+(n-1)\frac{\varphi^2}{\vert\xi\vert^2}\\
	=&\sum_{j\neq i}\bar{R}(\tilde{e}_j,\mathcal{N},\tilde{e}_j,\mathcal{N})+(n-1)\frac{\varphi^2}{\vert\xi\vert^2}
	\geq0.
\end{align*}
So,  the condition (\ref{limit-eq0})  implies that the Ricci curvatures of the leaf of $\mathcal{F}(\xi)$ are nonnegative.

By Theorem \ref{convergence}, we have proved that the limit hypersurface $\Sigma_{\infty}$ is a smooth totally umbilical hypersurface in $M'$.
In order to prove that $\Sigma_{\infty}$ is a  leaf of the foliation $\mathcal{F}(\xi)$, we only need to prove  $\vert\xi\vert^2=u^2$ on $\Sigma_{\infty}$. We shall consider the auxiliary function
\[\eta=\frac{\vert\xi\vert^2}{u^2}-1,\]
and attempt to prove that $\eta$ decays to zero along the flow (\ref{flow}).
\begin{proposition}
		Assume that $(M^{n+1},\bar{g})$ is a  Riemannian manifold endowed with a complete conformal vector field satisfying the conditions (i) and (iii) in Theorem \ref{convergence-0}. Let $\Sigma_0$ be a closed hypersurface embedded in $M'$, then under the flow (\ref{flow}), the norm of the conformal vector field $\vert\xi\vert^2$ evolves by 
	\begin{align}\label{norm}
		\partial_t\vert\xi\vert^2=&u\Delta_g\vert\xi\vert^2-2\frac{u}{\varphi}\langle\nabla\varphi,\nabla\vert\xi\vert^2\rangle+2n\vert\xi\vert^2\nu(\varphi)+2\frac{\vert\nabla\varphi\vert^2}{\varphi^2}u\vert\xi\vert^2\\&-2\frac{\vert\overline{\nabla}\varphi\vert^2}{\varphi^2}u(\vert\xi\vert^2-u^2)+2u\overline{Ric}(\xi,\xi)-2u\bar{R}(\nu,\xi,\nu,\xi).\nonumber
	\end{align}
\end{proposition}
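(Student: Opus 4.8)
The plan is to treat $\vert\xi\vert^2$ as the restriction to the evolving hypersurface $\Sigma(t)$ of the ambient function $\bar g(\xi,\xi)$, so that along (\ref{flow})
\[
\partial_t\vert\xi\vert^2=\langle\partial_tF,\overline{\nabla}\vert\xi\vert^2\rangle=(n\varphi-uH)\,\nu(\vert\xi\vert^2),
\]
and then to show that the right-hand side is $u\Delta_g\vert\xi\vert^2$ plus the lower-order terms in (\ref{norm}). First I would record the gradient of $\vert\xi\vert^2$: from $\langle\overline{\nabla}_X\xi,\xi\rangle=\frac{\varphi^2-\xi(\varphi)}{\varphi}\langle\xi,X\rangle+\frac{\vert\xi\vert^2}{\varphi}X(\varphi)$ (which is (\ref{secconformal-eq2}); equivalently use (\ref{gradofxi-1}) directly) one gets, on $\Sigma(t)$, $\nabla_i\vert\xi\vert^2=2\frac{\varphi^2-\xi(\varphi)}{\varphi}\langle\xi,e_i\rangle+2\frac{\vert\xi\vert^2}{\varphi}e_i(\varphi)$ and, with $e_i$ replaced by $\nu$, $\nu(\vert\xi\vert^2)=\frac{2u(\varphi^2-\xi(\varphi))}{\varphi}+\frac{2\vert\xi\vert^2}{\varphi}\nu(\varphi)$.

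Next I would compute $\Delta_g\vert\xi\vert^2$ by differentiating $\nabla_i\vert\xi\vert^2$ a second time along $\Sigma(t)$ and tracing. Here one uses the Gauss/Weingarten relation $\overline{\nabla}_{e_j}e_i=\nabla_je_i-h_{ij}\nu$ (so that the intrinsic Hessian of a restricted ambient function $f$ equals $\langle\overline{\nabla}_{e_i}\overline{\nabla}f,e_j\rangle-h_{ij}\nu(f)$), identity (\ref{1stder}) once more to expand $\overline{\nabla}_{e_j}\xi$, and (\ref{secconformal-eq7}) to differentiate $\frac{\varphi^2-\xi(\varphi)}{\varphi}$ tangentially. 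After tracing with $g^{ij}h_{ij}=H$, $\sum_i\langle\xi,e_i\rangle^2=\vert\xi\vert^2-u^2$, $\sum_i e_i(\varphi)\langle\xi,e_i\rangle=\xi(\varphi)-u\,\nu(\varphi)$, $\sum_i e_i(\varphi)^2=\vert\nabla\varphi\vert^2$ and $\sum_i\langle\overline{\nabla}_{e_i}\overline{\nabla}\varphi,e_i\rangle=\overline{\Delta}\varphi-\langle\overline{\nabla}_\nu\overline{\nabla}\varphi,\nu\rangle$, the resulting expression for $\Delta_g\vert\xi\vert^2$ contains only $H$-terms, first-order terms in $\varphi$, and the ambient second-derivative combinations $\overline{\Delta}\varphi-\langle\overline{\nabla}_\nu\overline{\nabla}\varphi,\nu\rangle$ and $\langle\overline{\nabla}_{\mathcal N}\overline{\nabla}\varphi,\mathcal N\rangle$ (the latter appearing once one rewrites $\xi(\frac{\varphi^2-\xi(\varphi)}{\varphi})$ using (\ref{secconformal-eq7}) together with $\langle\overline{\nabla}_v\overline{\nabla}\varphi,\xi\rangle=0$ for $v\perp\xi$ from (\ref{curvature-eq3}), which gives $\langle\overline{\nabla}_\xi\overline{\nabla}\varphi,\xi\rangle=\vert\xi\vert^2\langle\overline{\nabla}_{\mathcal N}\overline{\nabla}\varphi,\mathcal N\rangle$). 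Now the three curvature identities of Proposition \ref{conformal vec prop2} do the essential work: (\ref{curvature-eq2}) with $X=\xi$ turns $\overline{\Delta}\varphi-\langle\overline{\nabla}_{\mathcal N}\overline{\nabla}\varphi,\mathcal N\rangle$ into $-\frac{\varphi}{\vert\xi\vert^2}\overline{Ric}(\xi,\xi)$, and (\ref{curvature-eq1}) with $X=\nu$ turns $\frac{\vert\xi\vert^2}{\varphi}\langle\overline{\nabla}_\nu\overline{\nabla}\varphi,\nu\rangle-\frac{u^2}{\varphi}\langle\overline{\nabla}_{\mathcal N}\overline{\nabla}\varphi,\mathcal N\rangle$ into $-\bar R(\nu,\xi,\nu,\xi)$; combining these two substitutions, the residual bare $\langle\overline{\nabla}_{\mathcal N}\overline{\nabla}\varphi,\mathcal N\rangle$ contributions cancel and the curvature enters precisely as $\overline{Ric}(\xi,\xi)$ and $\bar R(\nu,\xi,\nu,\xi)$.

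Finally I would substitute $\nu(\vert\xi\vert^2)$ and $\Delta_g\vert\xi\vert^2$ into $\partial_t\vert\xi\vert^2=(n\varphi-uH)\nu(\vert\xi\vert^2)$ and regroup as $u\Delta_g\vert\xi\vert^2$ plus a remainder: the $H$-proportional terms cancel (so the quasilinear operator $u\Delta_g$ appears), the $n\varphi$-part of the speed leaves $2n\vert\xi\vert^2\nu(\varphi)$ after the $(\varphi^2-\xi(\varphi))$-terms coming from the trace are absorbed, and the leftover first-order terms in $\varphi$ reassemble into $-\frac{2u}{\varphi}\langle\nabla\varphi,\nabla\vert\xi\vert^2\rangle+2\frac{\vert\nabla\varphi\vert^2}{\varphi^2}u\vert\xi\vert^2-2\frac{\vert\overline{\nabla}\varphi\vert^2}{\varphi^2}u(\vert\xi\vert^2-u^2)$, using that $\langle\nabla\varphi,\nabla\vert\xi\vert^2\rangle=\frac{2(\varphi^2-\xi(\varphi))(\xi(\varphi)-u\nu(\varphi))}{\varphi}+\frac{2\vert\xi\vert^2\vert\nabla\varphi\vert^2}{\varphi}$ (which is just $\nabla_i\vert\xi\vert^2$ contracted with $\nabla_i\varphi$) and $\vert\overline{\nabla}\varphi\vert^2=\vert\nabla\varphi\vert^2+\nu(\varphi)^2$; this produces exactly (\ref{norm}). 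The only real obstacle is the bookkeeping: there is no conceptual difficulty, but one must be scrupulous about distinguishing the ambient gradient $\overline{\nabla}\varphi$ from the tangential gradient $\nabla\varphi$, and about matching the correct combinations of $\overline{\nabla}^2\varphi$ to the Ricci term versus the sectional-curvature term through Proposition \ref{conformal vec prop2}, so that every second-derivative-of-$\varphi$ term is either converted to curvature or cancels.
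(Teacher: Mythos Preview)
Your proposal is correct and follows essentially the same route as the paper: compute $\partial_t\vert\xi\vert^2=(n\varphi-uH)\,\nu(\vert\xi\vert^2)$ via (\ref{gradofxi-1})/(\ref{1stder}), expand $\Delta_g\vert\xi\vert^2$ using (\ref{1stder}) and (\ref{secconformal-eq7}), then convert the residual $\overline{\nabla}^2\varphi$ combinations to curvature through (\ref{curvature-eq1}) and (\ref{curvature-eq2}), so that upon subtraction the $H$-terms cancel and the remainder assembles into (\ref{norm}). The paper carries out exactly this computation (its equations (\ref{norm-eq1})--(\ref{norm-eq5})), with the only cosmetic difference that it rewrites $\xi\bigl(\frac{\varphi^2-\xi(\varphi)}{\varphi}\bigr)$ by expanding $\langle\overline{\nabla}_{\mathcal N}\overline{\nabla}\varphi,\mathcal N\rangle$ through (\ref{1stder}) rather than invoking (\ref{curvature-eq3}) directly as you suggest.
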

\begin{proof}
	First, along the flow (\ref{flow}), by (\ref{1stder}) we have
	\begin{align}\label{norm-eq1}
		\partial_t\vert\xi\vert^2=&2(n\varphi-uH)\langle\overline{\nabla}_{\nu}\xi,\xi\rangle\nonumber\\
		=&2\frac{\varphi^2-\xi(\varphi)}{\varphi}(n\varphi-uH)u+2\frac{\vert\xi\vert^2\nu(\varphi)}{\varphi}(n\varphi-uH).
	\end{align}
Next, using (\ref{1stder}) again, we have
\begin{align*}
	\nabla_i\vert\xi\vert^2=&2\langle\overline{\nabla}_i\xi,\xi\rangle=2\frac{\varphi^2-\xi(\varphi)}{\varphi}\langle\xi,e_i\rangle+2\frac{\langle\overline{\nabla}\varphi,e_i\rangle}{\varphi}\vert\xi\vert^2.
\end{align*}
Then
\begin{align}\label{norm-eq2}
	\Delta_g\vert\xi\vert^2
	=&2\frac{\varphi^2-\xi(\varphi)}{\varphi}(n\varphi-uH)+2\langle\xi,e_i\rangle e_i(\frac{\varphi^2-\xi(\varphi)}{\varphi})+2\frac{\langle\nabla\varphi,\nabla\vert\xi\vert^2\rangle}{\varphi}\nonumber\\
	&-2\frac{\nu(\varphi)}{\varphi}\vert\xi\vert^2H-2\frac{\vert\nabla\varphi\vert^2}{\varphi^2}\vert\xi\vert^2+2\frac{\vert\xi\vert^2}{\varphi}\langle\overline{\nabla}_{e_i}\overline{\nabla}\varphi,e_i\rangle.
\end{align}
Now, by (\ref{secconformal-eq7}) we get that
\begin{align}\label{norm-eq3}
	2\langle\xi,e_i\rangle e_i(\frac{\varphi^2-\xi(\varphi)}{\varphi})=&2\xi(\frac{\varphi^2-\xi(\varphi)}{\varphi})(1-\frac{u^2}{\vert\xi\vert^2})\nonumber\\
	=&2\left(\xi(\varphi)+\frac{(\xi(\varphi))^2}{\varphi^2}-\frac{\xi(\xi(\varphi))}{\varphi}\right)(1-\frac{u^2}{\vert\xi\vert^2}).
\end{align}
Then, we used (\ref{1stder}) to compute
\begin{align}\label{norm-eq6}
	\langle\overline{\nabla}_{\mathcal{N}}\overline{\nabla}\varphi,\mathcal{N}\rangle=&\frac{1}{\vert\xi\vert^2}\left(\xi(\xi(\varphi))-\langle\overline{\nabla}\varphi,\overline{\nabla}_{\xi}\xi\rangle\right)\nonumber\\
	=&\frac{1}{\vert\xi\vert^2}\left(\xi(\xi(\varphi))-\varphi\xi(\varphi)-\frac{(\xi(\varphi))^2}{\varphi}+\frac{\vert\xi\vert^2}{\varphi}\vert\overline{\nabla}\varphi\vert^2\right).
\end{align} 
Combining (\ref{norm-eq3}) with (\ref{norm-eq6}) we have
\begin{align}\label{norm-eq7}
	2\langle\xi,e_i\rangle e_i(\frac{\varphi^2-\xi(\varphi)}{\varphi})=2\left(\frac{\vert\xi\vert^2}{\varphi^2}\vert\overline{\nabla}\varphi\vert^2-\frac{\vert\xi\vert^2}{\varphi}\langle\overline{\nabla}_{\mathcal{N}}\overline{\nabla}\varphi,\mathcal{N}\rangle\right)(1-\frac{u^2}{\vert\xi\vert^2}).
\end{align}
On the other hand, recall (\ref{curvature-eq2}) we get
\begin{align}\label{norm-eq4}
	2\frac{\vert\xi\vert^2}{\varphi}\langle\overline{\nabla}_{e_i}\overline{\nabla}\varphi,e_i\rangle=&2\frac{\vert\xi\vert^2}{\varphi}\left(\overline{\Delta}\varphi-\langle\overline{\nabla}_{\nu}\overline{\nabla}\varphi,\nu\rangle\right)\nonumber\\
	=&-2\overline{Ric}(\xi,\xi)+2\frac{\vert\xi\vert^2}{\varphi}\left(\langle\overline{\nabla}_{\mathcal{N}}\overline{\nabla}\varphi,\mathcal{N}\rangle-\langle\overline{\nabla}_{\nu}\overline{\nabla}\varphi,\nu\rangle\right).
\end{align}
Plugging (\ref{norm-eq7}) and (\ref{norm-eq4}) into (\ref{norm-eq2}), we derive that
\begin{align}\label{norm-eq5}
	\Delta_g\vert\xi\vert^2=&2\frac{\langle\nabla\varphi,\nabla\vert\xi\vert^2\rangle}{\varphi}+2\frac{\varphi^2-\xi(\varphi)}{\varphi}(n\varphi-uH)+2\frac{\vert\overline{\nabla}\varphi\vert^2}{\varphi^2}(\vert\xi\vert^2-u^2)\nonumber\\
	&-2\frac{\vert\nabla\varphi\vert^2}{\varphi^2}\vert\xi\vert^2-2\frac{\nu(\varphi)}{\varphi}\vert\xi\vert^2H-2\overline{Ric}(\xi,\xi)+2\bar{R}(\nu,\xi,\nu,\xi),
\end{align}
where we used (\ref{curvature-eq1}). Now, combining (\ref{norm-eq1}) with (\ref{norm-eq5}) we obtain (\ref{norm}).

\end{proof}

\begin{proposition}
	Assume that $(M^{n+1},\bar{g})$ is a  Riemannian manifold endowed with a complete conformal vector field satisfying the conditions (i) and (iii) in Theorem \ref{convergence-0}. Let $\Sigma_0$ be a star-shaped, closed hypersurface embedded in $M'$, then under the flow (\ref{flow}), the function $\eta=\frac{\vert\xi\vert^2}{u^2}-1$ evolves by
	\begin{align}\label{fun2}
		\partial_t\eta=&u\Delta_g\eta+4\langle\nabla u,\nabla\eta\rangle+\left(H-2\frac{\varphi^2-\xi(\varphi)}{\varphi}\frac{u}{\vert\xi\vert^2}\right)\langle\nabla\eta,\xi\rangle\nonumber\\
		&-2\frac{u}{\varphi}\langle\nabla\varphi,\nabla\eta\rangle+\frac{u^3\vert\nabla\eta\vert^2}{2\vert\xi\vert^2}+2\left(\frac{\varphi^2-\xi(\varphi)}{\varphi}+\frac{\vert\xi\vert^2}{u}\frac{\nu(\varphi)}{\varphi}+\varphi\frac{\vert\xi\vert^2}{u^2}\right)H\nonumber\\
  &-2\frac{\vert\xi\vert^2}{u}\vert A\vert^2+2\frac{(\varphi^2-\xi(\varphi))^2}{\varphi^2}\frac{u}{\vert\xi\vert^2}\eta-2\frac{\vert\overline{\nabla}\varphi\vert^2}{\varphi^2}u\eta-2n(\varphi^2-\xi(\varphi))\frac{\vert\xi\vert^2}{u^3}\\
		&-2n\frac{\vert\xi\vert^2}{u^2}\nu(\varphi)-2u^{-1}\bar{R}(\nu,\xi,\nu,\xi)+2\frac{\vert\xi\vert^2}{u}\left(\overline{Ric}(\mathcal{N},\mathcal{N})-\overline{Ric}(\nu,\nu)\right).\nonumber
	\end{align}
\end{proposition}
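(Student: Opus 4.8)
The plan is to treat $\eta=\frac{\vert\xi\vert^2}{u^2}-1$ as a quotient of the two quantities whose evolution is already recorded, namely $\vert\xi\vert^2$ via (\ref{norm}) and $u$ via (\ref{ev-support fun}), and to push every second-order term into a single $u\Delta_g\eta$ term plus controlled lower-order corrections. Writing $\eta+1=\vert\xi\vert^2u^{-2}$ and differentiating in $t$ gives
\begin{align*}
\partial_t\eta=u^{-2}\,\partial_t\vert\xi\vert^2-2\vert\xi\vert^2u^{-3}\,\partial_tu,
\end{align*}
into which I substitute (\ref{norm}) and (\ref{ev-support fun}). Differentiating the same identity in space yields
\begin{align*}
\nabla\eta=u^{-2}\nabla\vert\xi\vert^2-2\vert\xi\vert^2u^{-3}\nabla u,\qquad \Delta_g\eta=u^{-2}\Delta_g\vert\xi\vert^2-2\vert\xi\vert^2u^{-3}\Delta_gu-4u^{-3}\langle\nabla u,\nabla\vert\xi\vert^2\rangle+6\vert\xi\vert^2u^{-4}\vert\nabla u\vert^2,
\end{align*}
so that the terms $u^{-1}\Delta_g\vert\xi\vert^2$ and $-2\vert\xi\vert^2u^{-2}\Delta_gu$ produced by the two evolution equations can be traded for $u\Delta_g\eta$ at the cost of the mixed gradient term $\langle\nabla u,\nabla\vert\xi\vert^2\rangle$ and the quadratic term $\vert\nabla u\vert^2$. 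This conversion is the bookkeeping core of the argument.

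Next I would absorb the first-order terms. Here I use (\ref{suppfun-eq1}) for $\nabla_iu$ and the formula $\nabla_i\vert\xi\vert^2=2\frac{\varphi^2-\xi(\varphi)}{\varphi}\langle\xi,e_i\rangle+2\frac{\langle\overline{\nabla}\varphi,e_i\rangle}{\varphi}\vert\xi\vert^2$ (the one obtained in the proof of (\ref{norm})) to rewrite $\langle\nabla u,\nabla\vert\xi\vert^2\rangle$, $\langle\nabla\varphi,\nabla\vert\xi\vert^2\rangle$, $\langle\nabla u,\nabla\varphi\rangle$ and $\vert\nabla u\vert^2$ in terms of $\langle\xi,e_i\rangle$, $h_{ik}\langle\xi,e_i\rangle\langle\xi,e_k\rangle$, $\nu(\varphi)$ and $e_i(\varphi)\langle\xi,e_i\rangle$, at which point the same quantities can be re-expressed through $\langle\xi,\nabla\eta\rangle$, $\langle\nabla u,\nabla\eta\rangle$ and $\vert\nabla\eta\vert^2$. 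The Hessian-of-$\varphi$ contractions $\langle\overline{\nabla}_{\mathcal{N}}\overline{\nabla}\varphi,\mathcal{N}\rangle$ and $\langle\overline{\nabla}_{\nu}\overline{\nabla}\varphi,\nu\rangle$ that sit in (\ref{norm}) and (\ref{ev-support fun}) are then eliminated using (\ref{curvature-eq2}), (\ref{curvature-eq1}) and (\ref{curvature-eq3}), exactly as in the proofs of those propositions; what remains of the curvature data is the Ricci combination together with the single sectional term $\bar{R}(\nu,\xi,\nu,\xi)$.

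Finally I would package the result. Using (\ref{curvature-eq3}) in the form $\overline{Ric}(\xi,\xi)=\vert\xi\vert^2\overline{Ric}(\mathcal{N},\mathcal{N})$, the term $2u^{-1}\overline{Ric}(\xi,\xi)$ coming from $u^{-2}\partial_t\vert\xi\vert^2$ and the term $-2\vert\xi\vert^2u^{-1}\overline{Ric}(\nu,\nu)$ coming from $-2\vert\xi\vert^2u^{-3}\cdot u^2\overline{Ric}(\nu,\nu)$ collect into $2\frac{\vert\xi\vert^2}{u}\bigl(\overline{Ric}(\mathcal{N},\mathcal{N})-\overline{Ric}(\nu,\nu)\bigr)$, matching the last line of (\ref{fun2}), while the purely algebraic terms in $H$, $\vert A\vert^2$, $\varphi$, $\nu(\varphi)$, $\varphi^2-\xi(\varphi)$ and $\vert\overline{\nabla}\varphi\vert^2$ regroup after repeatedly replacing $\vert\xi\vert^2-u^2$ by $u^2\eta$. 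The main obstacle is entirely computational: it is the simultaneous bookkeeping of the four quotient-rule corrections (the $\langle\nabla u,\nabla\vert\xi\vert^2\rangle$, $\langle\xi,\nabla u\rangle$, $\langle\xi,\nabla\vert\xi\vert^2\rangle$ contractions and the quadratic $\vert\nabla u\vert^2$) against the many first-order terms carried by $\nabla u$ and $\nabla\vert\xi\vert^2$, where sign errors proliferate; I would control this by extracting in order the coefficient of $u\Delta_g\eta$, then the coefficients of $\langle\nabla u,\nabla\eta\rangle$, $\langle\xi,\nabla\eta\rangle$, $\langle\nabla\varphi,\nabla\eta\rangle$ and $\vert\nabla\eta\vert^2$, and only at the very end the zeroth-order terms, which keeps the verification of (\ref{fun2}) tractable.
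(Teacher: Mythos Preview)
Your approach is essentially the same as the paper's: compute $\partial_t\eta=u^{-2}\partial_t\vert\xi\vert^2-2\vert\xi\vert^2u^{-3}\partial_tu$, substitute (\ref{norm}) and (\ref{ev-support fun}), convert the Laplacian terms via the quotient rule for $\Delta_g\eta$, and then use the explicit formula (\ref{1stder}) for $\overline{\nabla}_X\xi$ to rewrite $\nabla\eta$, $\langle\xi,\nabla\vert\xi\vert^2\rangle$ and the quadratic correction $\tfrac{\vert\xi\vert^2}{u^3}\vert\nabla u-\tfrac{u}{\varphi}\nabla\varphi\vert^2$ in the desired form. One small inaccuracy: the Hessian contractions $\langle\overline{\nabla}_{\mathcal{N}}\overline{\nabla}\varphi,\mathcal{N}\rangle$ and $\langle\overline{\nabla}_{\nu}\overline{\nabla}\varphi,\nu\rangle$ do not actually appear in (\ref{norm}) or (\ref{ev-support fun}) --- they were already traded for $\overline{Ric}$ and $\bar{R}(\nu,\xi,\nu,\xi)$ in the derivations of those equations --- so that elimination step is unnecessary here.
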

\begin{proof}
By the evolution equations (\ref{ev-support fun}) and (\ref{norm}) of the support function $u$ and the function $\vert\xi\vert^2$, we have
	\begin{align}\label{fun2-eq1}
		\partial_t\eta=&u^{-2}\partial_t\vert\xi\vert^2-2\frac{\vert\xi\vert^2}{u^3}\partial_tu\nonumber\\
		=&u^{-1}\Delta_g\vert\xi\vert^2-2\frac{\vert\xi\vert^2}{u^2}\Delta_gu-2\frac{1}{u\varphi}\langle\nabla\varphi,\nabla\vert\xi\vert^2\rangle-2\frac{\vert\xi\vert^2}{u^3}H\langle\xi,\nabla u\rangle\nonumber\\
		&+2\frac{\vert\nabla\varphi\vert^2}{\varphi^2}\frac{\vert\xi\vert^2}{u}-2\frac{\vert\overline{\nabla}\varphi\vert^2}{\varphi^2}\frac{\vert\xi\vert^2-u^2}{u}-2\frac{\vert\xi\vert^2}{u}\vert A\vert^2+4\varphi\frac{\vert\xi\vert^2}{u^2}H\nonumber\\
  &-2n(\varphi^2-\xi(\varphi))\frac{\vert\xi\vert^2}{u^3}-2n\frac{\vert\xi\vert^2}{u^2}\nu(\varphi)-2u^{-1}\bar{R}(\nu,\xi,\nu,\xi)\\
		&+2\frac{\vert\xi\vert^2}{u}\left(\overline{Ric}(\mathcal{N},\mathcal{N})-\overline{Ric}(\nu,\nu)\right).\nonumber
	\end{align}
Since
\begin{align}\label{fun2-eq2}
	\nabla\eta=u^{-2}\nabla\vert\xi\vert^2-2\frac{\vert\xi\vert^2}{u^3}\nabla u,
\end{align}
we get 
\begin{align*}
	u\Delta_g\eta=u^{-1}\Delta_g\vert\xi\vert^2-2\frac{\vert\xi\vert^2}{u^2}\Delta_gu-4\langle\nabla u,\nabla\eta\rangle-2\frac{\vert\xi\vert^2}{u^3}\vert\nabla u\vert^2,
\end{align*}
and
\begin{align*}
	-2\frac{1}{u\varphi}\langle\nabla\varphi,\nabla\vert\xi\vert^2\rangle=-2\frac{u}{\varphi}\langle\nabla\varphi,\nabla\eta\rangle-4\frac{\vert\xi\vert^2}{u^2\varphi}\langle\nabla u,\nabla \varphi\rangle.
\end{align*}
Plugging these into (\ref{fun2-eq1}) we have
\begin{align}\label{fun2-eq3}
		\partial_t\eta=&	u\Delta_g\eta+4\langle\nabla u,\nabla\eta\rangle-2\frac{u}{\varphi}\langle\nabla\varphi,\nabla\eta\rangle+H\langle\nabla\eta,\xi\rangle\nonumber\\
  &+2\frac{\vert\xi\vert^2}{u^3}\vert\nabla u-\frac{u}{\varphi}\nabla\varphi\vert^2-\frac{H}{u^2}\langle\xi,\nabla\vert\xi\vert^2\rangle-2\frac{\vert\xi\vert^2}{u}\vert A\vert^2+4\varphi\frac{\vert\xi\vert^2}{u^2}H\\
		&-2n\frac{\vert\xi\vert^2}{u^2}\nu(\varphi)-2\frac{\vert\overline{\nabla}\varphi\vert^2}{\varphi^2}u\eta-2n(\varphi^2-\xi(\varphi))\frac{\vert\xi\vert^2}{u^3}\nonumber\\
  &+2\frac{\vert\xi\vert^2}{u}\left(\overline{Ric}(\mathcal{N},\mathcal{N})-\overline{Ric}(\nu,\nu)\right)-2u^{-1}\bar{R}(\nu,\xi,\nu,\xi).\nonumber
\end{align}
Moreover, using (\ref{1stder}), we continue to compute $\nabla\eta$ after (\ref{fun2-eq2}),
\begin{align}\label{fun2-eq4}
	\nabla\eta
	=&2\frac{\vert\xi\vert^2}{u^3}\left(\frac{u}{\varphi}\nabla\varphi-\nabla u\right)+2\frac{\varphi^2-\xi(\varphi)}{\varphi}\frac{\xi^{\top}}{u^2},
\end{align} 
this yields
\begin{align}\label{fun2-eq5}
	2\frac{\vert\xi\vert^2}{u^3}\vert\nabla u-\frac{u}{\varphi}\nabla\varphi\vert^2=&\frac{u^3}{2\vert\xi\vert^2}\vert\nabla\eta\vert^2-2\frac{\varphi^2-\xi(\varphi)}{\varphi}\frac{u}{\vert\xi\vert^2}\langle\nabla\eta,\xi\rangle\nonumber\\
 &+2\frac{(\varphi^2-\xi(\varphi))^2}{\varphi^2}\frac{u}{\vert\xi\vert^2}\eta.
\end{align}
Meanwhile, by (\ref{1stder}) we have
\begin{align}\label{fun2-eq6}
	\langle\xi,\nabla\vert\xi\vert^2\rangle=2\frac{\varphi^2-\xi(\varphi)}{\varphi}(\vert\xi\vert^2-u^2)+2\frac{\vert\xi\vert^2}{\varphi}(\xi(\varphi)-u\nu(\varphi)).
\end{align}
Substituting (\ref{fun2-eq5}) and (\ref{fun2-eq6}) into (\ref{fun2-eq3}) we obtain (\ref{fun2}).
\end{proof}
\begin{theorem}\label{limit surface}
	Assume that $(M^{n+1},\bar{g})$ is a Riemannian manifold which satisfies all the conditions in Theorem \ref{convergence-0}. Moreover, assume that  the sectional curvature of the direction  $\xi$ satisfies (\ref{limit-eq0}) on $M'$.
Let $\Sigma_0$ be a star-shaped, closed hypersurface embedded in $M'$. Then, the solution hypersurfaces to the flow (\ref{flow}) with the initial data $\Sigma_0$ converge to a leaf of the foliation $\mathcal{F}(\xi)$.
\end{theorem}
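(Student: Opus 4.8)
The aim is to sharpen the conclusion of Theorem \ref{convergence}: we already know that $\Sigma_{\infty}$ is a closed, totally umbilical hypersurface with $\overline{Ric}(\nu_{\infty},\nu_{\infty})=\overline{Ric}(\mathcal{N},\mathcal{N})$, and by (\ref{secconformal-eq6}) it is a leaf of $\mathcal{F}(\xi)$ (i.e. a level set $\{\frac{\vert\xi\vert}{\varphi}=r^*\}$) precisely when $\xi$ is everywhere normal to it, that is, when the auxiliary function $\eta=\frac{\vert\xi\vert^2}{u^2}-1$ vanishes on $\Sigma_{\infty}$. Since $u=\langle\xi,\nu\rangle\le\vert\xi\vert$, we have $\eta\ge 0$ along the flow, so it suffices to prove $\eta\to 0$ as $t\to\infty$. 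The plan is to run a maximum principle on $\eta$ using its evolution equation (\ref{fun2}). Throughout, Proposition \ref{C0est-2} and Proposition \ref{C1-est} confine the evolving hypersurfaces to a fixed compact region with $u\ge\epsilon>0$, with $\varphi$ and $\varphi^2-\xi(\varphi)$ bounded below and $H$, $\overline{\nabla}\varphi$, $\overline{\nabla}^2\varphi$ and the full curvature of $M$ bounded, so every coefficient below that does not carry an explicit factor of $\eta$ is uniformly controlled.

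The first step is to exploit the constraint at a spatial maximum of $\eta$ with $\eta>0$. There $\nabla\eta=0$, which kills the first-order and $\vert\nabla\eta\vert^2$ terms of (\ref{fun2}); moreover, comparing the identity (\ref{fun2-eq4}) with the formula (\ref{suppfun-eq1}) for $\nabla u$ shows that $\xi^{\top}$ is a principal direction of the hypersurface there, with principal curvature $\mu=\frac{\nu(\varphi)}{\varphi}+\frac{\varphi^2-\xi(\varphi)}{\varphi}\frac{u}{\vert\xi\vert^2}$; substituting $\xi(\varphi)=\langle\overline{\nabla}\varphi,\xi^{\top}\rangle+u\,\nu(\varphi)$ gives
\[
\frac{\varphi}{u}-\mu=\frac{1}{\varphi}\left(\frac{(\varphi^2-\xi(\varphi))u}{\vert\xi\vert^2}\,\eta+\frac{\langle\overline{\nabla}\varphi,\xi^{\top}\rangle}{u}\right),
\]
and since $\vert\xi^{\top}\vert^2=\vert\xi\vert^2-u^2=u^2\eta$ one gets $\big(\frac{\varphi}{u}-\mu\big)^2\le C\eta$ once $\eta$ is a priori bounded. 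At such a maximum, hypothesis (iv) of Theorem \ref{convergence-0} gives $\overline{Ric}(\mathcal{N},\mathcal{N})-\overline{Ric}(\nu,\nu)\le 0$, while the sectional curvature hypothesis (\ref{limit-eq0}), rewritten as $\bar R(\nu,\xi,\nu,\xi)\ge-\frac{\varphi^2}{\vert\xi\vert^2}(\vert\xi\vert^2-u^2)=-\frac{\varphi^2 u^2}{\vert\xi\vert^2}\eta$, controls the only curvature term in (\ref{fun2}) by $-2u^{-1}\bar R(\nu,\xi,\nu,\xi)\le\frac{2\varphi^2 u}{\vert\xi\vert^2}\eta$.

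The heart of the argument is the remaining block in (\ref{fun2}),
\[
2\Big(\tfrac{\varphi^2-\xi(\varphi)}{\varphi}+\tfrac{\vert\xi\vert^2}{u}\tfrac{\nu(\varphi)}{\varphi}+\varphi\tfrac{\vert\xi\vert^2}{u^2}\Big)H-2\tfrac{\vert\xi\vert^2}{u}\vert A\vert^2-2n(\varphi^2-\xi(\varphi))\tfrac{\vert\xi\vert^2}{u^3}-2n\tfrac{\vert\xi\vert^2}{u^2}\nu(\varphi).
\]
Using that $\mu$ is one of the principal curvatures, $\vert A\vert^2\ge\mu^2+\frac{(H-\mu)^2}{n-1}$, eliminating $\nu(\varphi)=\varphi\mu-\frac{(\varphi^2-\xi(\varphi))u}{\vert\xi\vert^2}$, and maximising the resulting concave quadratic in $H$, this block should reduce to $\frac{(n-1)\vert\xi\vert^2}{2u}\big(\frac{\varphi}{u}-\mu\big)^2-\frac{2n(\varphi^2-\xi(\varphi))}{u}\,\eta$; combining this with the bound on $\big(\frac{\varphi}{u}-\mu\big)^2$, the lower bound on $\varphi^2-\xi(\varphi)$ from hypothesis (ii), the explicit negative term $-2\frac{\vert\overline{\nabla}\varphi\vert^2}{\varphi^2}u\eta$ already present in (\ref{fun2}), and the curvature bound above, one aims to show that the whole right-hand side of (\ref{fun2}) at the maximum is $\le u\Delta_g\eta-c\,\eta$ for a uniform $c>0$, whence $\partial_t\eta_{\max}(t)\le-c\,\eta_{\max}(t)$ and $\eta_{\max}\to 0$. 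Then $\eta\equiv 0$ on $\Sigma_{\infty}$, and a closed, totally umbilical hypersurface along which $\xi$ is normal must coincide with a leaf $S(r^*)$ of $\mathcal{F}(\xi)$. The main obstacle is exactly this last estimate: keeping the good term $-\frac{2n(\varphi^2-\xi(\varphi))}{u}\eta$ dominant over the error $\frac{(n-1)\vert\xi\vert^2}{2u}\big(\frac{\varphi}{u}-\mu\big)^2$ and over the residual curvature term, which is what the sectional curvature bound (\ref{limit-eq0}) is needed for, since it bounds $\bar R(\nu,\xi,\nu,\xi)$ below by a multiple of $\eta$ rather than by a constant. (Alternatively, since the flow converges smoothly one has $n\varphi-uH\to 0$, so on the stationary limit $H=\frac{n\varphi}{u}$ and $\vert A\vert^2=\frac{n\varphi^2}{u^2}$, and (\ref{fun2}) becomes an elliptic equation for $\eta$ on $\Sigma_{\infty}$ to which the strong maximum principle applies.)
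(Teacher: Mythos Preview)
Your setup and most of the algebra are correct: the identification of $\mu$ as a principal curvature at the maximum of $\eta$, the use of condition (iv) and of the sectional curvature bound (\ref{limit-eq0}) to turn $-2u^{-1}\bar R(\nu,\xi,\nu,\xi)$ into a harmless $C\eta$ term, and the reduction of the $H$--$|A|^2$ block to $\frac{(n-1)|\xi|^2}{2u}(\frac{\varphi}{u}-\mu)^2-\frac{2n(\varphi^2-\xi(\varphi))}{u}\eta$ after the Cauchy--Schwarz inequality $|A|^2\ge\mu^2+\frac{(H-\mu)^2}{n-1}$ and optimisation in $H$ all check out.

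The gap is the one you yourself flag as ``the main obstacle'': the bound $(\frac{\varphi}{u}-\mu)^2\le C\eta$ that you obtain from $|\xi^\top|^2=u^2\eta$ carries a constant $C$ of order $|\overline\nabla\varphi|^2/\varphi^2$ (the dominant contribution for small $\eta$ comes from the term $\langle\overline\nabla\varphi,\xi^\top\rangle/u$), and there is no reason for the resulting positive term $\frac{(n-1)|\xi|^2}{2u}\cdot C\eta$ to be absorbed by $-\frac{2n(\varphi^2-\xi(\varphi))}{u}\eta$ together with $-2\frac{|\overline\nabla\varphi|^2}{\varphi^2}u\eta$. Your target inequality $\partial_t\eta_{\max}\le -c\,\eta_{\max}$ therefore does not follow from the a priori estimates alone; the optimisation over $H$ throws away exactly the information you need.

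The paper closes this by feeding in the conclusion of Theorem~\ref{convergence}: since $\Sigma_t$ converges smoothly to a \emph{totally umbilical} limit, at the maximum point one may write $\lambda_i=h_{11}+\delta_i(t)$ with $\delta_i(t)\to 0$. Substituting $H=nh_{11}+o(1)$ and $|A|^2=nh_{11}^2+o(1)$ directly (in place of your Cauchy--Schwarz step) makes the $H$ and $|A|^2$ contributions cancel up to $o(1)$, and after the sectional curvature bound and $|\overline\nabla\varphi|^2\ge(\xi(\varphi))^2/|\xi|^2$ one arrives at
\[
\partial_t\eta\le -2(n-2)(\varphi^2-\xi(\varphi))u^{-1}\eta-4(\varphi^2-\xi(\varphi))\tfrac{u}{|\xi|^2}\eta^2+\delta(t)\le -\epsilon_1\eta^2+\delta(t),
\]
which forces $\eta_{\max}\to 0$. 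Your parenthetical alternative is essentially the same idea carried out on the stationary limit: on $\Sigma_\infty$ umbilicity gives $H=n\mu$ and stationarity gives $\varphi=u\mu$, so $(\frac{\varphi}{u}-\mu)^2=0$ exactly, and your elliptic maximum principle then goes through; this is a valid route and is equivalent in spirit to what the paper does parabolically with the $o(1)$ remainder.
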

\begin{proof}
	We focus on the evolution equation (\ref{fun2}) of the auxiliary function $\eta$. At the maximum point $p$ of $\eta$ on $\Sigma_t$,  without loss of generality, we assume that $\eta(p,t)> 0$, otherwise the proof is done.
 Recall (\ref{fun2-eq4}), at the point $p$ we have 
	\begin{align*}
		\nabla u-\frac{u\nabla\varphi}{\varphi}=\frac{\varphi^2-\xi(\varphi)}{\varphi}\frac{u}{\vert\xi\vert^2}\xi^{\top},
	\end{align*}
this implies that 
	\begin{align*}
		h_{ik}\langle\xi,e_k\rangle=&\frac{\varphi^2-\xi(\varphi)}{\varphi} \frac{u}{\vert\xi\vert^2}\langle\xi,e_i\rangle+\frac{\nu(\varphi)}{\varphi}\langle\xi,e_i\rangle,
	\end{align*}
where we used (\ref{suppfun-eq1}).
	We can choose the orthonormal frame $\{e_1=\frac{\xi^{\top}}{\vert\xi^{\top}\vert},e_2,\cdots,e_n\}$ around the point $p$, then
	\begin{align*}
		h_{11}=\frac{\varphi^2-\xi(\varphi)}{\varphi}\frac{u}{\vert\xi\vert^2}+\frac{\nu(\varphi)}{\varphi}.
	\end{align*}
	Taking $i\neq 1$ we have $h_{i1}=0$. Hence ,  $h_{11}$ is one of the principle curvatures of $\Sigma_t$ at the point $p$, then we have $$H=\frac{\varphi^2-\xi(\varphi)}{\varphi}\frac{u}{\vert\xi\vert^2}+\frac{\nu(\varphi)}{\varphi}+\sum_{i=2}^{n}\lambda_i$$ and 
	$$\vert  A\vert^2=h_{11}^2+\sum_{i=2}^{n}\lambda_i^2,$$
	where $\{\lambda_2,\cdots,\lambda_n\}$ are other principle curvatures of $\Sigma_t$ at the point $p$.
	Plugging this into (\ref{fun2}), at the point $p$ we have
	\begin{align}\label{limit-eq1}
		\partial_t\eta\leq&2\frac{(\varphi^2-\xi(\varphi))^2}{\varphi^2}\frac{u}{\vert\xi\vert^2}\eta-2\frac{\vert\overline{\nabla}\varphi\vert^2}{\varphi^2}u\eta+2\left(\frac{\vert\xi\vert^2}{u}h_{11}+\varphi\frac{\vert\xi\vert^2}{u^2}\right)H\nonumber\\
  &-2\frac{\vert\xi\vert^2}{u}(h_{11}^2+\sum_{i=2}^{n}\lambda_i^2)-2n(\varphi^2-\xi(\varphi))\frac{\vert\xi\vert^2}{u^3}-2n\frac{\vert\xi\vert^2}{u^2}\nu(\varphi)\nonumber\\
		&-2u^{-1}\bar{R}(\nu,\xi,\nu,\xi)\nonumber\\
		 =&2\frac{(\varphi^2-\xi(\varphi))^2}{\varphi^2}\frac{u}{\vert\xi\vert^2}\eta-2\frac{\vert\overline{\nabla}\varphi\vert^2}{\varphi^2}u\eta-2\frac{\vert\xi\vert^2}{u}\sum_{i=2}^{n}\lambda_i^2+2\frac{\vert\xi\vert^2}{u}h_{11}\sum_{i=2}^{n}\lambda_i\nonumber\\
   &+2\varphi\frac{\vert\xi\vert^2}{u^2}H-2n(\varphi^2-\xi(\varphi))\frac{\vert\xi\vert^2}{u^3}-2n\frac{\vert\xi\vert^2}{u^2}\nu(\varphi)-2u^{-1}\bar{R}(\nu,\xi,\nu,\xi).
	\end{align}
Since we already proved that the hypersurfaces $\Sigma_t$ converge to $\Sigma_{\infty}$ which is totally umbilical, we conclude that   
$$\lambda_i=h_{11}+\delta_i(t),\quad \forall i=2,\cdots,n,$$
where $\delta_i(t)$ are functions which tend to zero as $t\to+\infty$.
Therefore, (\ref{limit-eq1}) is reduced to
\begin{align}\label{limit-eq2}
	\partial_t\eta\leq &2\frac{(\varphi^2-\xi(\varphi))^2}{\varphi^2}\frac{u}{\vert\xi\vert^2}\eta-2\frac{\vert\overline{\nabla}\varphi\vert^2}{\varphi^2}u\eta-2u^{-1}\bar{R}(\nu,\xi,\nu,\xi)\nonumber\\
 &-2n(\varphi^2-\xi(\varphi))u^{-1}\eta+\delta(t)\nonumber\\
	\leq&2(\varphi^2-2\xi(\varphi))\frac{u}{\vert\xi\vert^2}\eta-2u^{-1}\bar{R}(\nu,\xi,\nu,\xi)\\
 &-2n(\varphi^2-\xi(\varphi))u^{-1}\eta+\delta(t),\nonumber
\end{align}
where in the last inequality we used the fact that $\vert\overline{\nabla}\varphi\vert^2\geq \frac{(\xi(\varphi))^2}{\vert\xi\vert^2}$ and $\delta(t)$ is a function tends to zero as $t\to+\infty$. Next, by the assumption (\ref{limit-eq0}), we have
\begin{align*}
	\bar{R}(\nu,\xi,\nu,\xi)\geq -\varphi^2\frac{u^2}{\vert\xi\vert^2}\eta,
\end{align*}
then
\begin{align*}
	\partial_t\eta
	\leq&4(\varphi^2-\xi(\varphi))\frac{u}{\vert\xi\vert^2}\eta-2n(\varphi^2-\xi(\varphi))u^{-1}\eta+\delta(t)\\
	=&-2(n-2)(\varphi^2-\xi(\varphi))u^{-1}\eta-4(\varphi^2-\xi(\varphi))\frac{u}{\vert\xi\vert^2}\eta^2+\delta(t).
\end{align*}
Since $\varphi^2-\xi(\varphi)>0$, by Proposition \ref{C0est-2} and \ref{C1-est} there is a uniform constant $\epsilon_1$ such that
\begin{align*}
	\partial_t\eta\leq&-\epsilon_1\eta^2+\delta(t)
\end{align*}
at the maximum point $p$ of $\eta$.
Therefore, by the maximum principle the function $\eta$ decays to zero along the flow (\ref{flow}), our proof is complete.
\end{proof}
Now, combining the above theorem with Theorem \ref{monotonethm} and Theorem \ref{convergence}, we have proved the isoperimetric inequality in Theorem \ref{isoineq2}.

\section{Closed conformal vector fields}\label{closed}

In this section, we will  investigate a special class of conformal vector fields, which is closed conformal vector fields. Recall that a conformal vector field  $\xi$ is called closed if
\begin{align}\label{cconformal}
	\overline{\nabla}_{X}\xi=\varphi X,\ \forall X\in TM.
\end{align}

For the Riemannian manifold admitting a closed conformal vector field $\xi$, the distribution $\mathcal{D}$ on $M'$ is integrable. It has been proved that each leaf of the foliation $\mathcal{F}(\xi)$ has constant mean curvature. More precisely, the following property holds (see \cite[Propostion 1]{Montiel99} for details).
\begin{proposition}\label{cconformal mfld1}
	Let $(M^{n+1},\bar{g})$ be a Riemannian manifold endowed with a closed non-trivial conformal vector field $\xi$. Then we have that 
	\begin{itemize}
		\item[(1)] On the open dense set $M'$,
		\[\overline{\nabla}_{\mathcal{N}}\mathcal{N}=0,\ \overline{\nabla}_X\mathcal{N}=\frac{\varphi}{\vert\xi\vert}X \ \text{if}\  X\bot\xi.\]
		In particular, the flow of $\mathcal{N}$ is a unit speed geodesic flow.
		\item[(2)] The functions $\vert\xi\vert$, $\varphi$ and $\xi(\varphi)$ are constant on connected leaves of $\mathcal{F}(\xi)$ and each leaf has constant mean curvature $H=n\frac{\varphi}{\vert\xi\vert}$.
	\end{itemize}
\end{proposition}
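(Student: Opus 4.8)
The plan is to extract every assertion from the defining identity $\overline{\nabla}_X\xi=\varphi X$ by differentiating it once and then twice, following Montiel's original argument.

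First I would record the scalar identity $\overline{\nabla}\vert\xi\vert^{2}=2\varphi\xi$, which is immediate: $X(\vert\xi\vert^{2})=2\langle\overline{\nabla}_X\xi,\xi\rangle=2\varphi\langle X,\xi\rangle$ for every $X$. Hence $X(\vert\xi\vert)=\varphi\langle X,\mathcal{N}\rangle$, so $\vert\xi\vert$ is constant along every leaf of $\mathcal{F}(\xi)$ (whose tangent distribution is $\mathcal{D}=\xi^{\perp}$), and, on $M'$,
\[
\overline{\nabla}_X\mathcal{N}=\frac{1}{\vert\xi\vert}\overline{\nabla}_X\xi-\frac{X(\vert\xi\vert)}{\vert\xi\vert^{2}}\xi=\frac{\varphi}{\vert\xi\vert}\bigl(X-\langle X,\mathcal{N}\rangle\mathcal{N}\bigr).
\]
Setting $X=\mathcal{N}$ gives $\overline{\nabla}_{\mathcal{N}}\mathcal{N}=0$, and setting $X\perp\xi$ gives $\overline{\nabla}_X\mathcal{N}=\tfrac{\varphi}{\vert\xi\vert}X$; since $\vert\mathcal{N}\vert\equiv1$ the integral curves of $\mathcal{N}$ are unit-speed geodesics. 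This proves part (1), and it simultaneously shows that the shape operator of each leaf with respect to $\mathcal{N}$ is $\tfrac{\varphi}{\vert\xi\vert}\,\mathrm{Id}$, so each leaf is totally umbilical with mean curvature $H=n\varphi/\vert\xi\vert$.

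For the remaining statements in part (2) I would differentiate $\overline{\nabla}_X\xi=\varphi X$ once more, obtaining the curvature identity $\bar{R}(X,Y)\xi=X(\varphi)Y-Y(\varphi)X$ (the bracket terms cancel against $\varphi[X,Y]$). Pairing with $\xi$ and using the skew-symmetry $\langle\bar{R}(X,Y)\xi,\xi\rangle=0$ yields $X(\varphi)\langle Y,\xi\rangle=Y(\varphi)\langle X,\xi\rangle$, and the choice $Y=\xi$ gives $\overline{\nabla}\varphi=\tfrac{\xi(\varphi)}{\vert\xi\vert^{2}}\,\xi$; in particular $\varphi$ is constant along every leaf. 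Writing $\overline{\nabla}\varphi=f\xi$ with $f=\xi(\varphi)/\vert\xi\vert^{2}$ and using the symmetry of $\mathrm{Hess}\,\varphi$ — from $\overline{\nabla}_X(f\xi)=X(f)\xi+f\varphi X$ one gets $X(f)\langle Y,\xi\rangle=Y(f)\langle X,\xi\rangle$ — shows $\overline{\nabla}f\parallel\xi$, so $f$, and hence $\xi(\varphi)=f\vert\xi\vert^{2}$, is constant along every leaf. Combining the constancy of $\varphi$ and of $\vert\xi\vert$ along leaves gives that $H=n\varphi/\vert\xi\vert$ is constant on each leaf.

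None of these computations is technically demanding; the only steps requiring some care are the passage from the pointwise curvature identity to the gradient formula for $\varphi$ (contracting in the correct slot and invoking the symmetries of the Riemann tensor) and the bookkeeping that keeps all manipulations on the regular set $M'$, where $\vert\xi\vert>0$, the vector field $\mathcal{N}=\xi/\vert\xi\vert$ is defined, and the divisions by $\vert\xi\vert^{2}$ are legitimate. No genuine obstacle is expected, since the proposition is classical and is quoted from \cite{Montiel99}.
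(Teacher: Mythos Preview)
Your argument is correct and complete. The paper does not give its own proof of this proposition at all---it simply states the result and refers to \cite[Proposition~1]{Montiel99}---so your reconstruction of Montiel's argument (computing $\overline{\nabla}_X\mathcal{N}$ directly from $\overline{\nabla}_X\xi=\varphi X$, then differentiating once to get the curvature identity $\bar{R}(X,Y)\xi=X(\varphi)Y-Y(\varphi)X$ and extracting $\overline{\nabla}\varphi\parallel\xi$, and once more via the symmetry of $\mathrm{Hess}\,\varphi$ to get $\overline{\nabla}(\xi(\varphi)/\vert\xi\vert^{2})\parallel\xi$) is exactly the intended proof.
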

Hence, by (\ref{cconformal}) and (2) in Proposition \ref{cconformal mfld1} we have the following proposition:
\begin{proposition}\label{proposition vec prop3}
	Let $(M^{n+1},\bar{g})$ be a Riemannian manifold endowed with a closed, complete conformal vector field $\xi$, then the following identities hold on the open set $M'$:
	\begin{align}
		&\overline{\nabla}\vert\xi\vert^2=2\varphi\xi, \label{iden1}\\
		&\vert\xi\vert^2\overline{\nabla}\varphi=\xi(\varphi)\xi, \label{iden4}\\
		&\vert\xi\vert^2\overline{\nabla}(\xi(\varphi))=\xi(\xi(\varphi))\xi. \label{iden5}
	\end{align}
\end{proposition}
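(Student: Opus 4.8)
The strategy is to read off all three identities from the closed conformal equation (\ref{cconformal}) together with part (2) of Proposition \ref{cconformal mfld1}. The identity (\ref{iden1}) is immediate: for any vector field $X$ on $M'$, differentiating $\vert\xi\vert^2=\langle\xi,\xi\rangle$ and using (\ref{cconformal}) gives $X(\vert\xi\vert^2)=2\langle\overline{\nabla}_X\xi,\xi\rangle=2\varphi\langle X,\xi\rangle$, hence $\overline{\nabla}\vert\xi\vert^2=2\varphi\xi$.

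For (\ref{iden4}) and (\ref{iden5}) the plan is to observe that the gradient of any function that is constant along the connected leaves of $\mathcal{F}(\xi)$ is everywhere proportional to $\xi$. Indeed, the tangent space of a leaf at $p\in M'$ is $\mathcal{D}(p)=\{v\in T_pM\mid\langle\xi(p),v\rangle=0\}$, so such a gradient is orthogonal to $\mathcal{D}(p)$ and therefore a multiple of $\xi(p)$. By part (2) of Proposition \ref{cconformal mfld1}, both $\varphi$ and $\xi(\varphi)$ are constant on connected leaves, so $\overline{\nabla}\varphi=a\,\xi$ and $\overline{\nabla}(\xi(\varphi))=b\,\xi$ for smooth functions $a,b$ on $M'$; pairing each equation with $\xi$ and dividing by $\vert\xi\vert^2>0$ (legitimate since we work on the regular set $M'$, where $\xi$ never vanishes) identifies $a=\xi(\varphi)/\vert\xi\vert^2$ and $b=\xi(\xi(\varphi))/\vert\xi\vert^2$, which are exactly (\ref{iden4}) and (\ref{iden5}) after clearing the denominator.

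If one prefers an argument that does not invoke the foliation result, the same identities follow by differentiating (\ref{cconformal}) once more. A short computation of the curvature operator using (\ref{cconformal}) and the torsion-free property gives
\[
\bar{R}(Y,X)\xi=Y(\varphi)X-X(\varphi)Y,\qquad\forall X,Y\in TM;
\]
setting $X=\xi$ and taking the inner product with $\xi$, the left-hand side vanishes by the antisymmetry of $\bar{R}$ in its last two slots, which forces $\vert\xi\vert^2\,Y(\varphi)=\xi(\varphi)\langle Y,\xi\rangle$ for all $Y$, i.e. (\ref{iden4}). Substituting (\ref{iden4}) into the symmetry of the Hessian $\overline{\nabla}^2\varphi$ — evaluating $\overline{\nabla}_X\big(\tfrac{\xi(\varphi)}{\vert\xi\vert^2}\xi\big)$ with the help of (\ref{cconformal}) and (\ref{iden1}) — yields that $\overline{\nabla}(\xi(\varphi))$ is again proportional to $\xi$, and pairing with $\xi$ produces (\ref{iden5}). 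There is no genuine obstacle here; the computations are entirely routine, and the only points requiring minor care are the curvature-sign bookkeeping in the second approach and the fact that every statement lives on $M'$, so that $\vert\xi\vert$ stays positive throughout.
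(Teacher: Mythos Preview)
Your proof is correct and follows the same approach as the paper, which simply cites (\ref{cconformal}) together with part (2) of Proposition \ref{cconformal mfld1}: identity (\ref{iden1}) comes straight from the closed conformal equation, and (\ref{iden4}), (\ref{iden5}) follow because $\varphi$ and $\xi(\varphi)$ are leafwise constant, forcing their gradients to be multiples of $\xi$. Your additional curvature-based derivation is a nice self-contained alternative not present in the paper, and it is also correct.
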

Next, by direct computation, Proposition \ref{conformal vec prop2} can be reduced to the following simplified version.
\begin{proposition}
	Let $(M^{n+1},\bar{g})$ be a Riemannian manifold endowed with a closed, complete conformal vector field $\xi$. Then, on $M'$ we have
	\begin{align}\label{ccurvature}
		K(X,\xi)
		=&-\frac{\xi(\varphi)}{\vert\xi\vert^2},\ \forall X\in TM,X\neq\xi,
	\end{align}
	\begin{align}\label{ricci1}
		\overline{Ric}(\xi,X)=-n\frac{\xi(\varphi)}{\vert\xi\vert^2}\langle\xi,X\rangle, \ \forall X\in TM.
	\end{align}
	Moreover, 
	\begin{align}\label{ricci2}
		\overline{Ric}(\mathcal{N},\mathcal{N})=-n\frac{\xi(\varphi)}{\vert\xi\vert^2}.
	\end{align}
\end{proposition}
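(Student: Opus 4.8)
The plan is to bypass the general formulas of Proposition~\ref{conformal vec prop2} and instead use the closed condition (\ref{cconformal}) directly, which makes the curvature of $\xi$ completely explicit. First I would compute $\bar{R}(X,Y)\xi$ from the definition of the curvature tensor: since $\overline{\nabla}_Z\xi=\varphi Z$ for every $Z\in TM$, we get
\[
\bar{R}(X,Y)\xi=\overline{\nabla}_X(\varphi Y)-\overline{\nabla}_Y(\varphi X)-\varphi[X,Y]=X(\varphi)\,Y-Y(\varphi)\,X.
\]
The only extra input needed is the identity (\ref{iden4}), namely $\vert\xi\vert^2\overline{\nabla}\varphi=\xi(\varphi)\xi$, which gives $X(\varphi)=\langle\overline{\nabla}\varphi,X\rangle=\frac{\xi(\varphi)}{\vert\xi\vert^2}\langle\xi,X\rangle$ for all $X$.

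Next, I would specialize $Y=\xi$ and pair with $X$. With the same sign convention for $\bar{R}$ as in Proposition~\ref{conformal vec prop2} (so that $\bar{R}(X,\xi,X,\xi)=\langle\bar{R}(X,\xi)\xi,X\rangle$, which one checks is consistent with (\ref{curvature-eq1})), this yields
\[
\bar{R}(X,\xi,X,\xi)=X(\varphi)\langle\xi,X\rangle-\xi(\varphi)\vert X\vert^2=-\frac{\xi(\varphi)}{\vert\xi\vert^2}\bigl(\vert X\vert^2\vert\xi\vert^2-\langle X,\xi\rangle^2\bigr),
\]
and dividing by $\vert X\vert^2\vert\xi\vert^2-\langle X,\xi\rangle^2$ gives (\ref{ccurvature}) at once. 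For the Ricci curvature I would take the trace of the map $Z\mapsto\bar{R}(Z,X)\xi=Z(\varphi)X-X(\varphi)Z$ over an orthonormal frame $\{e_i\}_{i=1}^{n+1}$: the first term contributes $\sum_i e_i(\varphi)\langle X,e_i\rangle=X(\varphi)$ and the second contributes $(n+1)X(\varphi)$, so $\overline{Ric}(\xi,X)=\overline{Ric}(X,\xi)=-nX(\varphi)$, and substituting $X(\varphi)=\frac{\xi(\varphi)}{\vert\xi\vert^2}\langle\xi,X\rangle$ gives (\ref{ricci1}). Finally (\ref{ricci2}) follows by putting $X=\xi$ in (\ref{ricci1}), obtaining $\overline{Ric}(\xi,\xi)=-n\xi(\varphi)$, and dividing by $\vert\xi\vert^2$ since $\mathcal{N}=\xi/\vert\xi\vert$ on $M'$.

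There is no serious obstacle here: the computation is short, and the only point that requires care is keeping the curvature-tensor sign convention consistent with (\ref{curvature-eq1})--(\ref{curvature-eq2}) so that the stated formulas come out with the correct sign. Alternatively, one could plug (\ref{iden4}) and (\ref{iden5}) directly into the formulas (\ref{curvature-eq1}) and (\ref{curvature-eq2}) of Proposition~\ref{conformal vec prop2} and simplify, but that route requires differentiating $\overline{\nabla}\varphi$ once more to express $\overline{\nabla}^2\varphi$, whereas the argument above avoids this entirely.
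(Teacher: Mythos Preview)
Your argument is correct and, in fact, slightly more direct than the paper's. The paper proves this proposition by invoking the general formulas (\ref{curvature-eq1}) and (\ref{curvature-eq2}) of Proposition~\ref{conformal vec prop2} and substituting the closed-case identities (\ref{iden4}) and (\ref{iden5}) for $\overline{\nabla}\varphi$ and $\overline{\nabla}(\xi(\varphi))$; this forces one to compute the Hessian $\overline{\nabla}^2\varphi$ in the $\mathcal{N}$ direction and its trace, and to check that the various second-order terms collapse to the stated expressions. Your route instead exploits the closed condition $\overline{\nabla}_Z\xi=\varphi Z$ from the outset to obtain the explicit formula $\bar R(X,Y)\xi=X(\varphi)Y-Y(\varphi)X$, after which (\ref{iden4}) alone suffices and (\ref{iden5}) is never needed. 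The advantage of your approach is that it is self-contained and avoids second derivatives of $\varphi$ entirely; the advantage of the paper's approach is that it makes transparent how the closed case specializes the general Proposition~\ref{conformal vec prop2}. Your remark about the sign convention is apt: since (\ref{eq-10}) in the paper uses $\overline{Ric}(\xi,X)=\sum_i\langle\bar R(e_i,X)\xi,e_i\rangle$, your trace computation matches exactly.
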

\begin{proof}
	Substituting (\ref{iden4}) and (\ref{iden5}) into (\ref{curvature-eq1}) and (\ref{curvature-eq2}), the proof is complete. 
\end{proof}
\begin{remark}
	From (\ref{ccurvature}), if the conformal vector field $\xi$ is closed, then the condition (\ref{limit-eq0})  for the sectional  curvature holds when $\varphi^2-\xi(\varphi)\geq 0$.
\end{remark}
Until now, if we assume that $M$ is a Riemannian manifold endowed with a closed, complete conformal vector field $\xi$ satisfies $\varphi>0$ and $\varphi^2-\xi(\varphi)>0$, and the direction determined by $\xi$ is of least Ricci curvature on $M$. Then by Theorem \ref{isoineq2}, the isoperimetric inequality  holds for any star-shaped hypersurface $\Sigma$ embedded in $M'$.

Next, we will give a new proof of the isoperimetric inequality  when the conformal vector field $\xi$ is closed. The proof is much simpler than before and we can release the condition $\varphi^2-\xi(\varphi)>0$ to $\varphi^2-\xi(\varphi)\geq 0$.

We simplify the evolution equations of $\vert\xi\vert^2$ and the support function $u$ first.
\begin{proposition}\label{evolutionofradial}
	Let $(M^{n+1},\bar{g})$ be a Riemannian manifold endowed with a closed, complete conformal vector field $\xi$. Let $\Sigma_0$ be a closed, star-shaped hypersurface embedded in $M'$, under the flow (\ref{flow}) we have
	\begin{align}
		\partial_t\vert\xi\vert^2&=u\Delta_g\vert\xi\vert^2-2\xi(\varphi)(\vert\xi\vert^2-u^2)\frac{u}{\vert\xi\vert^2}, \label{radialfun1}
	\end{align}
\end{proposition}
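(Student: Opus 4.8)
The plan is to rederive the evolution equation directly from the closedness relation (\ref{cconformal}) together with Proposition \ref{proposition vec prop3}, rather than to specialise the general formula (\ref{norm}). This is cleaner because, when $\xi$ is closed, (\ref{iden1}) and (\ref{iden4}) say that both $\overline{\nabla}\vert\xi\vert^2$ and $\overline{\nabla}\varphi$ are parallel to $\xi$, so all of the $\nu(\varphi)$-, $\vert\overline{\nabla}\varphi\vert^2$- and curvature-type terms appearing in (\ref{norm}) collapse or cancel. The argument splits into the time derivative of $\vert\xi\vert^2$ along the flow and its intrinsic Laplacian $\Delta_g\vert\xi\vert^2$ on $\Sigma(t)$.

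First I would compute the time derivative. Since $\partial_tF=(n\varphi-uH)\nu$ and $\overline{\nabla}\vert\xi\vert^2=2\varphi\xi$ by (\ref{iden1}),
\begin{align*}
	\partial_t\vert\xi\vert^2=\langle\overline{\nabla}\vert\xi\vert^2,\partial_tF\rangle=2\varphi u\,(n\varphi-uH),
\end{align*}
where $u=\langle\xi,\nu\rangle$.

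Next I would compute $\Delta_g\vert\xi\vert^2$. The intrinsic gradient of $\vert\xi\vert^2$ on $\Sigma$ is the tangential projection of $\overline{\nabla}\vert\xi\vert^2$, that is $\nabla\vert\xi\vert^2=2\varphi\,\xi^{\top}$ with $\xi^{\top}=\xi-u\nu$. Taking the intrinsic divergence and using $div_{\Sigma}(\xi^{\top})=n\varphi-uH$ --- exactly the identity established in the proof of (\ref{minikowski0}) --- together with $\overline{\nabla}\varphi=\tfrac{\xi(\varphi)}{\vert\xi\vert^2}\xi$ from (\ref{iden4}), which gives $\langle\nabla\varphi,\xi^{\top}\rangle=\tfrac{\xi(\varphi)}{\vert\xi\vert^2}\vert\xi^{\top}\vert^2=\tfrac{\xi(\varphi)}{\vert\xi\vert^2}(\vert\xi\vert^2-u^2)$, I obtain
\begin{align*}
	\Delta_g\vert\xi\vert^2=2\langle\nabla\varphi,\xi^{\top}\rangle+2\varphi\,div_{\Sigma}(\xi^{\top})=2\xi(\varphi)\frac{\vert\xi\vert^2-u^2}{\vert\xi\vert^2}+2\varphi(n\varphi-uH).
\end{align*}

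Multiplying this by $u$ and subtracting from $\partial_t\vert\xi\vert^2$, the two copies of $2\varphi u(n\varphi-uH)$ cancel and exactly (\ref{radialfun1}) remains. There is essentially no genuine obstacle here; the only points needing care are the standard bookkeeping with the tangential part $\xi^{\top}$ and the recognition that closedness is precisely what forces the curvature and $\nu(\varphi)$ contributions present in (\ref{norm}) to drop out --- which is why this direct route is preferable to substituting Proposition \ref{proposition vec prop3} into the general evolution equation.
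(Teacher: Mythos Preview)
Your argument is correct. The time derivative and the Laplacian computations are both right: $\partial_t|\xi|^2=2\varphi u(n\varphi-uH)$ follows immediately from (\ref{iden1}), and the divergence identity $div_\Sigma(\xi^\top)=n\varphi-uH$ together with (\ref{iden4}) gives $\Delta_g|\xi|^2=2\xi(\varphi)\frac{|\xi|^2-u^2}{|\xi|^2}+2\varphi(n\varphi-uH)$, whence (\ref{radialfun1}) drops out after multiplying by $u$ and subtracting.

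The paper takes the opposite route: it substitutes the closed-case identities (\ref{supportfun1-eq0}), (\ref{supportfun1-eq1}), (\ref{ccurvature}) and (\ref{ricci2}) into the already-established general evolution equation (\ref{norm}) and lets the curvature, $\nu(\varphi)$ and $|\overline{\nabla}\varphi|^2$ terms collapse. Your direct computation is more self-contained and avoids invoking the curvature identities altogether; the paper's approach is shorter given that (\ref{norm}) is already on hand, but requires one to track how several separate terms cancel. Both are fine, and your observation that closedness is exactly what makes the extra terms in (\ref{norm}) disappear is precisely the content of the paper's substitution step.
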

\begin{proof}
		By (\ref{iden1}) and (\ref{iden4}), we have 
		\begin{align}\label{supportfun1-eq0}
			\nabla\vert\xi\vert^2=2\varphi\xi^{\top}, 
		\end{align}
and
	\begin{align}\label{supportfun1-eq1}
		\nabla\varphi=\frac{\xi(\varphi)}{\vert\xi\vert^2}\xi^{\top},\ \nu(\varphi)=\frac{u}{\vert\xi\vert^2}\xi(\varphi).
	\end{align}
	Plugging these into (\ref{norm}), then (\ref{radialfun1}) follows from  (\ref{ccurvature}) and (\ref{ricci2}).
\end{proof}

Applying the evolution equation (\ref{radialfun1}), we now can obtain the uniform bound of $\vert\xi\vert^2$ along the flow (\ref{flow}).
\begin{proposition}\label{C0est}
	Let $(M^{n+1},\bar{g})$ be a Riemannian manifold endowed with a closed, complete conformal vector field $\xi$,  and assume that $\varphi>0$ on $M$. Let $\Sigma_0$ be a closed, star-shaped hypersurface embedded in $M'$, and $\Sigma(t)$ be a smooth solution of the flow (\ref{flow}) starting from $\Sigma_0$.
	Then for any $(p,t)\in\Sigma\times[0,T)$ we have
	\begin{align}\label{C0est1}
		\min_{p\in\Sigma}\vert\xi\vert^2(p,0)\leq \vert\xi\vert^2(p,t)\leq \max_{p\in\Sigma}\vert\xi\vert^2(p,0). 
	\end{align}
\end{proposition}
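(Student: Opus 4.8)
The plan is to run the parabolic maximum principle on the scalar quantity $\vert\xi\vert^2$ restricted to the moving hypersurface, exactly as in the proof of Proposition \ref{C0est-2}, but now exploiting the much simpler evolution equation \eqref{radialfun1} available in the closed case. First I would recall that along the flow \eqref{flow} one has
\[
\partial_t\vert\xi\vert^2=u\Delta_g\vert\xi\vert^2-2\xi(\varphi)\,(\vert\xi\vert^2-u^2)\,\frac{u}{\vert\xi\vert^2},
\]
and that the tangential gradient is $\nabla\vert\xi\vert^2=2\varphi\,\xi^{\top}$ by \eqref{supportfun1-eq0}. The decomposition $\xi=\xi^{\top}+u\nu$ gives $\vert\xi\vert^2=\vert\xi^{\top}\vert^2+u^2$, so the offending zeroth-order reaction term is proportional to $\vert\xi^{\top}\vert^2$.

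Next I would analyze the spatial critical points of $p\mapsto\vert\xi\vert^2(p,t)$ on $\Sigma(t)$. At such a point $\nabla\vert\xi\vert^2=0$, hence $\varphi\,\xi^{\top}=0$; since the standing hypothesis is $\varphi>0$ on $M$, this forces $\xi^{\top}=0$, i.e. $\vert\xi\vert^2=u^2$ there. Consequently the term $-2\xi(\varphi)(\vert\xi\vert^2-u^2)\frac{u}{\vert\xi\vert^2}$ vanishes identically at every spatial critical point, and the evolution equation collapses to the pure diffusion relation $\partial_t\vert\xi\vert^2=u\,\Delta_g\vert\xi\vert^2$ there. Note that this step does \emph{not} need any sign condition on $\xi(\varphi)$ (in particular not $\varphi^2-\xi(\varphi)>0$), which is why the closed case is cleaner.

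Finally I would conclude by the standard maximum principle argument for moving hypersurfaces: set $f(t)=\max_{\Sigma(t)}\vert\xi\vert^2$, which is locally Lipschitz in $t$, and at a time of differentiability evaluate $f'(t)$ at a point realizing the maximum. There $\Delta_g\vert\xi\vert^2\le0$ and $u>0$ by star-shapedness of $\Sigma(t)$ (which is maintained on the interval where the smooth solution is defined as a star-shaped hypersurface), so $f'(t)\le0$ and $f(t)\le f(0)=\max_{\Sigma}\vert\xi\vert^2(\cdot,0)$. The symmetric computation with $g(t)=\min_{\Sigma(t)}\vert\xi\vert^2$ gives $g'(t)\ge0$ and hence the lower bound $g(t)\ge\min_{\Sigma}\vert\xi\vert^2(\cdot,0)$, completing \eqref{C0est1}. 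The only point requiring a word of care is the use of $u>0$ along the flow; on the interval of existence this is part of the star-shapedness being propagated, and it is the one hypothesis I would flag explicitly rather than a genuine obstacle, since the maximum principle itself is entirely routine here.
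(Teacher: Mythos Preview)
Your proposal is correct and follows essentially the same approach as the paper: compute $\nabla\vert\xi\vert^2=2\varphi\,\xi^{\top}$, observe that $\varphi>0$ forces $\vert\xi\vert^2=u^2$ at spatial critical points so that the reaction term in \eqref{radialfun1} drops out, and then apply the standard maximum principle to $\partial_t\vert\xi\vert^2=u\Delta_g\vert\xi\vert^2$. Your explicit flagging of the $u>0$ hypothesis and the remark that no sign on $\xi(\varphi)$ is needed here are good clarifying touches, but the argument is the paper's own.
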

\begin{proof}
	Recall (\ref{supportfun1-eq0}), we have 
	\begin{align*}
		\vert\nabla\vert\xi\vert^2\vert^2=4\varphi^2(\vert\xi\vert^2-u^2).
	\end{align*}
	Since $\varphi>0$, at the critical point of $\vert\xi\vert^2$, the following condition holds:
	\begin{align*}
		\vert\xi\vert^2-u^2=0.
	\end{align*}
	Hence, at the critical point, by (\ref{radialfun1})  we have 
	\begin{align*}
		\partial_t\vert\xi\vert^2&=u\Delta_g\vert\xi\vert^2.
	\end{align*}
	 This implies (\ref{C0est1}) by the standard maximum principle.
\end{proof}
\begin{proposition}
Let $(M^{n+1},\bar{g})$ be a Riemannian manifold endowed with a closed, complete conformal vector field $\xi$. Let $\Sigma_0$ be a closed, star-shaped hypersurface embedded in $M'$,	under the flow (\ref{flow}), the support function $u$ evolves by
	\begin{align}\label{supportfun1}
		(\partial_t-u\Delta_g)u&=H\langle\xi,\nabla u\rangle+\vert A\vert^2u^2-2\varphi Hu-(\overline{Ric}(\mathcal{N},\mathcal{N})-\overline{Ric}(\nu,\nu))u^2\nonumber\\&\quad  +n\varphi^2-n\xi(\varphi)(1-\frac{u^2}{\vert\xi\vert^2}).
	\end{align}
\end{proposition}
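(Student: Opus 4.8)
The plan is to obtain (\ref{supportfun1}) simply by specializing the general evolution equation (\ref{ev-support fun}) to the closed case; no new geometry is needed beyond the identities already recorded in this section. First I would observe that a closed conformal vector field automatically meets hypothesis (iii) in Theorem \ref{convergence-0}: by Proposition \ref{cconformal mfld1} every connected leaf of $\mathcal{F}(\xi)$ has constant mean curvature $H=n\varphi/\vert\xi\vert$. Hence (\ref{ev-support fun}) together with the curvature formulas of Proposition \ref{conformal vec prop2} is available, and it only remains to rewrite the three ``closed-sensitive'' terms $2n\nu(\varphi)u$, $u^2\overline{Ric}(\nu,\nu)$ and $-n\xi(\varphi)$ appearing in (\ref{ev-support fun}).

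Concretely, I would invoke (\ref{supportfun1-eq1}), which gives $\nu(\varphi)=\frac{u}{\vert\xi\vert^2}\xi(\varphi)$, so that $2n\nu(\varphi)u=2n\frac{u^2}{\vert\xi\vert^2}\xi(\varphi)$; and the Ricci identity (\ref{ricci2}), $\overline{Ric}(\mathcal{N},\mathcal{N})=-n\frac{\xi(\varphi)}{\vert\xi\vert^2}$, which allows me to write $u^2\overline{Ric}(\nu,\nu)=-\big(\overline{Ric}(\mathcal{N},\mathcal{N})-\overline{Ric}(\nu,\nu)\big)u^2-n\frac{\xi(\varphi)}{\vert\xi\vert^2}u^2$. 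Adding these contributions, one copy of $n\frac{u^2}{\vert\xi\vert^2}\xi(\varphi)$ cancels and what survives is $n\frac{u^2}{\vert\xi\vert^2}\xi(\varphi)-\big(\overline{Ric}(\mathcal{N},\mathcal{N})-\overline{Ric}(\nu,\nu)\big)u^2$; combining the remaining $n\frac{u^2}{\vert\xi\vert^2}\xi(\varphi)$ with the term $-n\xi(\varphi)$ already present in (\ref{ev-support fun}) produces exactly $-n\xi(\varphi)\big(1-\frac{u^2}{\vert\xi\vert^2}\big)$. The other terms $u\Delta_g u$, $H\langle\xi,\nabla u\rangle$, $\vert A\vert^2u^2$, $-2\varphi uH$ and $n\varphi^2$ are untouched, and moving $u\Delta_g u$ to the left gives (\ref{supportfun1}).

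If one prefers not to rely on the general formula, there is an even shorter self-contained route using only $\overline{\nabla}_X\xi=\varphi X$. This yields at once $\langle\overline{\nabla}_\nu\xi,\nu\rangle=\varphi$, hence $\partial_t u=n\varphi^2-\varphi uH-n\langle\xi,\nabla\varphi\rangle+\langle\xi,\nabla(uH)\rangle$, and $\nabla_i u=h_{ik}\langle\xi,e_k\rangle$, hence $\Delta_g u=h_{ik,i}\langle\xi,e_k\rangle+\varphi H-\vert A\vert^2u$; the Codazzi equation rewrites $h_{ik,i}\langle\xi,e_k\rangle$ as $\langle\xi,\nabla H\rangle+\overline{Ric}(\xi,\nu)-u\overline{Ric}(\nu,\nu)$, (\ref{ricci1}) identifies $\overline{Ric}(\xi,\nu)=u\,\overline{Ric}(\mathcal{N},\mathcal{N})$, and (\ref{iden4}) gives $\langle\xi,\nabla\varphi\rangle=\xi(\varphi)\big(1-\frac{u^2}{\vert\xi\vert^2}\big)$. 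Forming $\partial_t u-u\Delta_g u$ then cancels the $\langle\xi,\nabla H\rangle$ contributions and reproduces (\ref{supportfun1}). Either way the argument is routine bookkeeping; the only genuine point of care is the hypothesis check in the first approach (verifying that closedness supplies condition (iii), so that Proposition \ref{conformal vec prop2} applies), or, in the second approach, keeping straight the signs in the Codazzi step and in the formula $\nabla_i u=h_{ik}\langle\xi,e_k\rangle$.
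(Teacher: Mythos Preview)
Your first approach is exactly what the paper does: its proof is the single sentence ``substitute (\ref{ricci2}) and (\ref{supportfun1-eq1}) into (\ref{ev-support fun}),'' and your algebra carrying out that substitution is correct. The alternative self-contained derivation you sketch is also fine but is not needed here.
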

\begin{proof}
	The proof follows from substituting (\ref{ricci2}) and  (\ref{supportfun1-eq1}) into (\ref{ev-support fun}).
\end{proof}

Now, we are prepared to control the gradient of $\vert\xi\vert^2$ along the flow.
 We consider the auxiliary function
\begin{align*}
	\omega:=\frac{1}{2}\vert\xi\vert^2\eta=\frac{1}{2}\vert\xi\vert^2(\frac{\vert\xi\vert^2}{u^2}-1).
\end{align*}
\begin{lemma}
	Let $(M^{n+1},\bar{g})$ be a Riemannian manifold endowed with a closed, complete conformal vector field $\xi$. Let $\Sigma_0$ be a closed, star-shaped hypersurface embedded in $M'$,	under the flow (\ref{flow}), the function $\omega$ evolves by
\begin{align}\label{auxifun2-eq0}
	\partial_t\omega=&u\Delta_g\omega+\frac{u^3}{\vert\xi\vert^4}(2\frac{\vert\xi\vert^2}{u^2}-3)\langle\nabla\omega,\nabla\vert\xi\vert^2\rangle-3\frac{u^3}{\vert\xi\vert^4}\vert\nabla\omega\vert^2+H\langle\nabla\omega,\xi\rangle\nonumber\\
 &-\frac{\vert\xi\vert^4}{u}\vert A\vert^2+(3\vert\xi\vert^2-u^2)\varphi H-2(\varphi^2-\xi(\varphi))\left(n-2\frac{u^2}{\vert\xi\vert^2}+\frac{u^4}{\vert\xi\vert^4}\right)\frac{\omega}{u}\\
	&-\varphi^2\left((n+2)\frac{\vert\xi\vert^2}{u}-7u+8\frac{u^3}{\vert\xi\vert^2}-3\frac{u^5}{\vert\xi\vert^4}\right)\nonumber\\
 &+\frac{\vert\xi\vert^4}{u}(\overline{Ric}(\mathcal{N},\mathcal{N})-\overline{Ric}(\nu,\nu)).\nonumber
\end{align}
\end{lemma}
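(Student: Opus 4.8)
The plan is to compute $(\partial_t-u\Delta_g)\omega$ directly from the two scalar evolution equations already at our disposal and then to rewrite the resulting gradient terms with the identities special to a closed field. Write $\rho:=\vert\xi\vert^2$, so that $\omega=\frac12\rho^2u^{-2}-\frac12\rho$. Differentiating in $t$ gives $\partial_t\omega=(\rho u^{-2}-\frac12)\partial_t\rho-\rho^2u^{-3}\partial_t u$, and applying the product rule for $\Delta_g$ to the nonlinear expression $\frac12\rho^2u^{-2}$ gives
\[
u\Delta_g\omega=\Big(\frac{\rho}{u}-\frac{u}{2}\Big)\Delta_g\rho-\frac{\rho^2}{u^2}\Delta_g u+u^{-1}\vert\nabla\rho\vert^2-4\rho u^{-2}\langle\nabla\rho,\nabla u\rangle+3\rho^2u^{-3}\vert\nabla u\vert^2 .
\]
Subtracting, the second order part collapses into $(\partial_t-u\Delta_g)\rho$ and $(\partial_t-u\Delta_g)u$, which are supplied by Proposition \ref{evolutionofradial} and by (\ref{supportfun1}); this already produces the terms $-\frac{\rho^2}{u}\vert A\vert^2$ and $\frac{\rho^2}{u}(\overline{Ric}(\mathcal{N},\mathcal{N})-\overline{Ric}(\nu,\nu))$ of (\ref{auxifun2-eq0}), and leaves behind the three quadratic gradient terms $-u^{-1}\vert\nabla\rho\vert^2+4\rho u^{-2}\langle\nabla\rho,\nabla u\rangle-3\rho^2u^{-3}\vert\nabla u\vert^2$ together with various lower order terms in $\varphi,\xi(\varphi),H,\vert A\vert^2$.

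To treat the gradient terms I would use the closed field identities. From (\ref{cconformal}) one has $\nabla_i u=h_{ik}\langle\xi,e_k\rangle$, that is $\nabla u=A(\xi^{\top})$, while (\ref{supportfun1-eq0}) and (\ref{supportfun1-eq1}) give $\nabla\rho=2\varphi\,\xi^{\top}$ and $\nu(\varphi)=\frac{u}{\rho}\xi(\varphi)$; hence $\vert\nabla\rho\vert^2=4\varphi^2(\rho-u^2)$, $\langle\xi,\nabla\rho\rangle=2\varphi(\rho-u^2)$ and $\langle\nabla\rho,\nabla u\rangle=2\varphi\langle\xi,\nabla u\rangle$, and the curvature term in (\ref{supportfun1}) is already in the form appearing in (\ref{auxifun2-eq0}) by (\ref{ricci2}). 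The key algebraic move is to invert the relation $\nabla\omega=(\frac{\rho}{u^2}-\frac12)\nabla\rho-\frac{\rho^2}{u^3}\nabla u$ to
\[
\nabla u=\frac{(2\rho-u^2)u}{2\rho^2}\,\nabla\rho-\frac{u^3}{\rho^2}\,\nabla\omega ,
\]
and to substitute this into $\langle\nabla\rho,\nabla u\rangle$, $\langle\xi,\nabla u\rangle$ and $\vert\nabla u\vert^2$. After the substitution every gradient quantity is expressed through $\langle\nabla\omega,\nabla\rho\rangle$, $\vert\nabla\omega\vert^2$, $\langle\nabla\omega,\xi\rangle$ and data of order zero: the $\nabla\omega$ linear parts reassemble into the transport terms of (\ref{auxifun2-eq0}), with $\frac{u^3}{\rho^2}(2\frac{\rho}{u^2}-3)\langle\nabla\omega,\nabla\rho\rangle$ and $-3\frac{u^3}{\rho^2}\vert\nabla\omega\vert^2$ coming from the quadratic terms and $H\langle\nabla\omega,\xi\rangle$ coming from the $H\langle\xi,\nabla u\rangle$ term of (\ref{supportfun1}); the remaining pieces, after using $\rho-u^2=u^2\eta=\frac{2u^2}{\rho}\omega$ wherever a factor $\omega$ is wanted and leaving $H$ as it is elsewhere, collect into $(3\rho-u^2)\varphi H$, the damping term $-2(\varphi^2-\xi(\varphi))\big(n-2\frac{u^2}{\rho}+\frac{u^4}{\rho^2}\big)\frac{\omega}{u}$, and the remaining $\varphi^2$ polynomial of (\ref{auxifun2-eq0}). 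Comparison with (\ref{auxifun2-eq0}) is then term by term.

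I expect the only real obstacle to be the bookkeeping in this last step. The product rule for $\Delta_g$ produces the three quadratic corrections $u^{-1}\vert\nabla\rho\vert^2$, $\langle\nabla\rho,\nabla u\rangle$, $\vert\nabla u\vert^2$ with delicate coefficients, and after the substitution for $\nabla u$ several $\varphi^2$ and $\varphi H$ type terms appear carrying factors $\rho-u^2$ that must be combined; for instance the contributions $\frac{2\rho^2}{u^2}\varphi H$ (from the $-2\varphi Hu$ term of (\ref{supportfun1})) and $-\frac{(2\rho-u^2)(\rho-u^2)}{u^2}\varphi H$ (from $H\langle\xi,\nabla u\rangle$ after the substitution) merge into $(3\rho-u^2)\varphi H$ only via the identity $2\rho^2-(2\rho-u^2)(\rho-u^2)=u^2(3\rho-u^2)$, and similarly the $\xi(\varphi)$ and $\varphi^2$ pieces must be repackaged using $\rho-u^2=\frac{2u^2}{\rho}\omega$. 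None of this is conceptually hard; it is a careful but routine computation, and the check that every term of (\ref{auxifun2-eq0}) vanishes on a leaf of $\mathcal{F}(\xi)$ (where $\omega\equiv 0$, $\nu=\mathcal{N}$, $h_{ij}=\frac{\varphi}{\vert\xi\vert}g_{ij}$ and $H=n\frac{\varphi}{\vert\xi\vert}$) provides a useful control throughout.
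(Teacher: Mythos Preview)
Your proposal is correct and follows essentially the same route as the paper: both compute $\partial_t\omega$ and $u\Delta_g\omega$ via the chain and product rules, feed in the evolution equations (\ref{radialfun1}) and (\ref{supportfun1}), and then eliminate the mixed gradient terms by expressing $\nabla u$ through $\nabla\omega$ and $\nabla\vert\xi\vert^2$. The paper performs this elimination by substituting (\ref{auxifun2-eq2}) back into the expanded Laplacian (its equation (\ref{auxifun2-eq3})), whereas you invert (\ref{auxifun2-eq2}) explicitly for $\nabla u$ and plug in; these are the same linear algebra step, and your verification of the $(3\rho-u^2)\varphi H$ coefficient is exactly the type of check that confirms the bookkeeping is right.
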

\begin{proof}
	Combining (\ref{radialfun1}) with (\ref{supportfun1}) we have
	\begin{align}\label{auxifun2-eq1}
		\partial_t\omega=&(\frac{\vert\xi\vert^2}{u^2}-\frac{1}{2})\partial_t\vert\xi\vert^2-\frac{\vert\xi\vert^4}{u^3}\partial_tu\nonumber\\
		=&(\frac{\vert\xi\vert^2}{u^2}-\frac{1}{2})\left(u\Delta_g\vert\xi\vert^2-2\xi(\varphi)(\vert\xi\vert^2-u^2)\frac{u}{\vert\xi\vert^2}\right)-\frac{\vert\xi\vert^4}{u^3}\Big(u\Delta_gu\\
  &+H\langle\xi,\nabla u\rangle+\vert A\vert^2u^2-2\varphi Hu-(\overline{Ric}(\mathcal{N},\mathcal{N})-\overline{Ric}(\nu,\nu))u^2 \nonumber\\
  &+n\varphi^2-n\xi(\varphi)(1-\frac{u^2}{\vert\xi\vert^2})\Big).\nonumber
	\end{align}
Next, we compute
\begin{align}\label{auxifun2-eq2}
	\nabla\omega=&(\frac{\vert\xi\vert^2}{u^2}-\frac{1}{2})\nabla\vert\xi\vert^2-\frac{\vert\xi\vert^4}{u^3}\nabla u.
\end{align}
Thus
\begin{align}\label{auxifun2-eq3}
	\Delta_g\omega=&(\frac{\vert\xi\vert^2}{u^2}-\frac{1}{2})\Delta_g\vert\xi\vert^2-\frac{\vert\xi\vert^4}{u^3}\Delta_gu-4\frac{\vert\xi\vert^2}{u^3}\langle\nabla\vert\xi\vert^2,\nabla u\rangle\nonumber\\
 &+\frac{\vert\nabla\vert\xi\vert^2\vert^2}{u^2}+3\frac{\vert\xi\vert^4}{u^4}\vert\nabla u\vert^2\nonumber\\
	=&(\frac{\vert\xi\vert^2}{u^2}-\frac{1}{2})\Delta_g\vert\xi\vert^2-\frac{\vert\xi\vert^4}{u^3}\Delta_gu-\frac{u^2}{\vert\xi\vert^4}(2\frac{\vert\xi\vert^2}{u^2}-3)\langle\nabla\omega,\nabla\vert\xi\vert^2\rangle\\
	&+3\frac{u^2}{\vert\xi\vert^4}\vert\nabla\omega\vert^2-\frac{u^2}{\vert\xi\vert^4}(\frac{\vert\xi\vert^2}{u^2}-\frac{3}{4})\vert\nabla\vert\xi\vert^2\vert^2,\nonumber
\end{align}
where in the second equality we used (\ref{auxifun2-eq2}). Now, substituting (\ref{auxifun2-eq3}) into (\ref{auxifun2-eq1}) and using (\ref{auxifun2-eq2}) again, we see that
\begin{align}\label{auxifun2-eq4}
	\partial_t\omega=&u\Delta_g\omega+\frac{u^3}{\vert\xi\vert^4}(2\frac{\vert\xi\vert^2}{u^2}-3)\langle\nabla\omega,\nabla\vert\xi\vert^2\rangle-3\frac{u^3}{\vert\xi\vert^4}\vert\nabla\omega\vert^2+H\langle\nabla\omega,\xi\rangle\nonumber\\
	&+\frac{u^3}{\vert\xi\vert^4}(\frac{\vert\xi\vert^2}{u^2}-\frac{3}{4})\vert\nabla\vert\xi\vert^2\vert^2-H(\frac{\vert\xi\vert^2}{u^2}-\frac{1}{2})\langle\xi,\nabla\vert\xi\vert^2\rangle+2\frac{\vert\xi\vert^4}{u^2}\varphi H\\
	&-\frac{\vert\xi\vert^4}{u}\vert A\vert^2-n\varphi^2\frac{\vert\xi\vert^4}{u^3}+\xi(\varphi)\left(n\frac{\vert\xi\vert^4}{u^3}-(n+2)\frac{\vert\xi\vert^2}{u}+3u-\frac{u^3}{\vert\xi\vert^2}\right)\nonumber\\	&+\frac{\vert\xi\vert^4}{u}(\overline{Ric}(\mathcal{N},\mathcal{N})-\overline{Ric}(\nu,\nu)).\nonumber
\end{align}
Then, Plugging (\ref{supportfun1-eq0})  into  (\ref{auxifun2-eq4}) leads to (\ref{auxifun2-eq0}).
\end{proof}
\begin{proposition}\label{C1-est-cc}
		Let $(M^{n+1},\bar{g})$ be a Riemannian manifold endowed with a closed, complete conformal vector field $\xi$. We assume that $\varphi>0$, $\varphi^2-\xi(\varphi)\geq 0$ and the direction determined by $\xi$ is of least Ricci curvature on $M$. Let $\Sigma_0$ be a closed, star-shaped hypersurface embedded in $M'$, then along the flow (\ref{flow}),
		\begin{align*}
			\max_{\Sigma_t}\omega\leq \frac{C_1}{\sqrt{t+1}},
		\end{align*}
		where $C_1$ is a uniform constant depending on the positive lower bound of $\varphi$, uniform estimate of $\vert\xi\vert^2$ and $\max_{\Sigma_0}\omega$.
\end{proposition}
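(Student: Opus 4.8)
The plan is to run the parabolic maximum principle on $\omega$ via its evolution equation (\ref{auxifun2-eq0}), reduce the right-hand side at an interior spatial maximum to a negative multiple of a power of $\omega$, and then close with an elementary ODE comparison. Set $\Omega(t)=\max_{\Sigma_t}\omega$, which is finite since $u>0$ on the compact $\Sigma_t$; we may assume $\Omega(t)>0$, since once $\Omega$ vanishes the hypersurface is a leaf of $\mathcal F(\xi)$ and there is nothing to prove. By Hamilton's trick, for almost every $t$ we work at a point $p_t$ realizing the maximum, where $\nabla\omega=0$ and $\Delta_g\omega\le 0$, so $\Omega'(t)$ is at most the right-hand side of (\ref{auxifun2-eq0}) with the $\Delta_g\omega$, $\langle\nabla\omega,\cdot\rangle$ and $|\nabla\omega|^2$ terms deleted. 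Two of the surviving terms are non-positive and may be dropped: $\frac{|\xi|^4}{u}\big(\overline{Ric}(\mathcal N,\mathcal N)-\overline{Ric}(\nu,\nu)\big)\le 0$ by condition (iv) of Theorem \ref{convergence-0}, and $-2(\varphi^2-\xi(\varphi))\big(n-2\frac{u^2}{|\xi|^2}+\frac{u^4}{|\xi|^4}\big)\frac{\omega}{u}\le 0$ because $\varphi^2-\xi(\varphi)\ge 0$, $u>0$, $\omega\ge 0$, and $n-2\frac{u^2}{|\xi|^2}+\frac{u^4}{|\xi|^4}=(n-1)+\big(1-\frac{u^2}{|\xi|^2}\big)^2>0$; this last point is exactly where the weak inequality $\varphi^2-\xi(\varphi)\ge 0$ suffices in place of the strict one.

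Next I would exploit the gradient condition at $p_t$. Using the closed-field identities $\nabla|\xi|^2=2\varphi\,\xi^{\top}$ and $\nabla\varphi=\frac{\xi(\varphi)}{|\xi|^2}\xi^{\top}$ (see (\ref{supportfun1-eq0}), (\ref{supportfun1-eq1})), formula (\ref{suppfun-eq1}) collapses to $\nabla_i u=h_{ik}\langle\xi,e_k\rangle$; combining this with $\nabla\omega=0$ and (\ref{auxifun2-eq2}) shows that $\xi^{\top}$ is an eigendirection of the Weingarten map at $p_t$ with eigenvalue $h_{11}=\frac{\varphi u(2|\xi|^2-u^2)}{|\xi|^4}$ and $h_{1j}=0$ for $j\ge 2$ (choose $e_1=\xi^{\top}/|\xi^{\top}|$, which is legitimate since $\omega(p_t)>0$ forces $\xi^{\top}\ne 0$). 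This is the analogue of the corresponding step in the proof of Theorem \ref{limit surface}, made cleaner by (\ref{iden4})--(\ref{iden5}). Writing $H=h_{11}+\sum_{j\ge 2}\lambda_j$ and $|A|^2=h_{11}^2+\sum_{j\ge 2}\lambda_j^2$, bounding $\sum_{j\ge2}\lambda_j^2\ge\frac{1}{n-1}\big(\sum_{j\ge2}\lambda_j\big)^2$, maximizing the resulting downward parabola in $\sum_{j\ge2}\lambda_j$, and finally substituting the value of $h_{11}$, the entire remaining right-hand side reduces to $\varphi^2|\xi|\,g(\theta)$, where $\theta:=u^2/|\xi|^2\in(0,1]$ and $\sqrt\theta\,g(\theta)=\frac{n+11}{4}\theta^3-\frac{3(n+5)}{2}\theta^2+\frac{9(n+3)}{4}\theta-(n+2)$.

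The decisive algebraic fact — and the reason one weights $\eta$ by $\frac12|\xi|^2$ to form $\omega$ — is that $\theta=1$ is a double root of this cubic (a triple root when $n=1$): one checks $\sqrt\theta\,g(\theta)=\frac{n+11}{4}(\theta-1)^2(\theta-\rho)$ with $\rho=\frac{4(n+2)}{n+11}\ge 1$. Verifying this identity and the ensuing sign is the main obstacle, though the computation is routine. Consequently, on $(0,1)$ one has, using $\sqrt\theta=u/|\xi|$ and $\rho-\theta\ge 1-\theta$,
\[
g(\theta)\ =\ \frac{|\xi|}{u}\,\frac{n+11}{4}(\theta-1)^2(\theta-\rho)\ \le\ -\frac{|\xi|}{u}\,\frac{n+11}{4}(1-\theta)^3 .
\]
From the definition of $\omega$ one has $1-\theta=\frac{2u^2\omega}{|\xi|^4}$ and $u^2=\frac{|\xi|^4}{|\xi|^2+2\omega}$, whence $\frac{|\xi|^2}{u}(1-\theta)^3=\frac{8\omega^3}{(|\xi|^2+2\omega)^{5/2}}$. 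Feeding this back, and using the uniform bound $|\xi|^2\le\Lambda$ of Proposition \ref{C0est} together with a uniform positive lower bound $\varphi_0$ for $\varphi$ on the (compact) region containing the flow, we obtain at $p_t$
\[
\Omega'(t)\ \le\ \varphi^2|\xi|\,g(\theta)\ \le\ -2(n+11)\,\varphi_0^2\,\frac{\Omega(t)^3}{\big(\Lambda+2\Omega(t)\big)^{5/2}}\ \le\ 0 .
\]

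Finally, since $\Omega$ is non-increasing we have $\Omega(t)\le\Omega(0)=\max_{\Sigma_0}\omega$, so $\Lambda+2\Omega(t)\le\Lambda+2\Omega(0)$ and the displayed inequality becomes $\Omega'(t)\le -a\,\Omega(t)^3$ for an explicit constant $a=a(n,\varphi_0,\Lambda,\Omega(0))>0$. Integrating $\frac{d}{dt}\Omega^{-2}\ge 2a$ gives $\Omega(t)^{-2}\ge\Omega(0)^{-2}+2at$; splitting into $t\le 1$ (where $\Omega(t)\le\Omega(0)\le\sqrt2\,\Omega(0)(t+1)^{-1/2}$) and $t\ge 1$ (where $\Omega(t)\le(2at)^{-1/2}\le(a(t+1))^{-1/2}$) yields $\max_{\Sigma_t}\omega\le C_1(t+1)^{-1/2}$ with $C_1$ depending only on the positive lower bound of $\varphi$, the bound on $|\xi|^2$ from Proposition \ref{C0est}, and $\max_{\Sigma_0}\omega$, as claimed.
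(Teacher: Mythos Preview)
Your proof is correct and follows essentially the same route as the paper: drop the Ricci and $\varphi^2-\xi(\varphi)$ terms from (\ref{auxifun2-eq0}), use $\nabla\omega=0$ at the maximum to extract the eigenvalue $h_{11}=\varphi u(2|\xi|^2-u^2)/|\xi|^4$, optimize over the remaining $\lambda_j$, and reduce to a cubic ODE for $\max\omega$. Your packaging via $\theta=u^2/|\xi|^2$ and the explicit factorization $\sqrt\theta\,g(\theta)=\tfrac{n+11}{4}(\theta-1)^2(\theta-\rho)$ is exactly the paper's identity (\ref{cconformal gradient-eq3}) in disguise, and your one-step closure (rewriting $u^5/|\xi|^{10}=(|\xi|^2+2\omega)^{-5/2}$ and then using $\Omega(t)\le\Omega(0)$) neatly replaces the paper's two-step bootstrap of first bounding $\omega$ to get $u\ge\epsilon_2$ and then deducing $\Omega'\le -K_3\Omega^3$.
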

\begin{proof}
	Recall the evolution equation (\ref{auxifun2-eq0}) of the function $\omega$, since $\frac{u^2}{\vert\xi\vert^2}\leq 1$, by the assumption of Ricci curvature on $M$ and $\varphi^2-\xi(\varphi)\geq 0$ we have 
	\begin{align*}
	\partial_t\omega\leq&u\Delta_g\omega+\frac{u^3}{\vert\xi\vert^4}(2\frac{\vert\xi\vert^2}{u^2}-3)\langle\nabla\omega,\nabla\vert\xi\vert^2\rangle-\frac{u^3}{\vert\xi\vert^4}\vert\nabla\omega\vert^2+H\langle\nabla\omega,\xi\rangle\nonumber\\
	&+(3\vert\xi\vert^2-u^2)\varphi H-\frac{\vert\xi\vert^4}{u}\vert A\vert^2-\varphi^2\left((n+2)\frac{\vert\xi\vert^2}{u}-7u+8\frac{u^3}{\vert\xi\vert^2}-3\frac{u^5}{\vert\xi\vert^4}\right).\nonumber
	\end{align*}
We will work at a maximum point $p$ of the test function $\omega$, so that the following identity holds by (\ref{auxifun2-eq2}):
\begin{align}\label{cconformal gradient-eq1}
	(\frac{\vert\xi\vert^2}{u^2}-\frac{1}{2})\nabla\vert\xi\vert^2=\frac{\vert\xi\vert^4}{u^3}\nabla u.
\end{align}
We can pick the orthonormal frame $\{e_1=\frac{\xi^{\top}}{\vert\xi^{\top}\vert},e_2,\cdots,e_n\}$ around the point $p$, then (\ref{cconformal gradient-eq1}) leads to
\begin{align*}
	(2\frac{\vert\xi\vert^2}{u^2}-1)\varphi\langle\xi,e_i\rangle
	=\frac{\vert\xi\vert^4}{u^3}h_{ik}\langle\xi,e_k\rangle, \ \forall i=1,\cdots,n,
\end{align*}
where we used (\ref{cconformal}) to compute $\nabla_iu=h_{ik}\langle\xi,e_k\rangle$. Then, we have
\begin{align}
	h_{1i}\langle\xi,e_1\rangle=(2\frac{u}{\vert\xi\vert^2}-\frac{u^3}{\vert\xi\vert^4})\varphi\langle\xi,e_i\rangle,
\end{align}
this implies $h_{11}=\varphi(2\frac{u}{\vert\xi\vert^2}-\frac{u^3}{\vert\xi\vert^4})$ and $h_{1i}=0$ for any $i\neq 1$. Therefore, at the point $p$, $h_{11}$ is one of the principle curvatures and we denote the others by $\{\lambda_2,\cdots,\lambda_n\}$. Thus
\begin{align}\label{cconformal gradient-eq2}
		\partial_t\omega\leq&(3\vert\xi\vert^2-u^2)\varphi \sum_{i=2}^{n}\lambda_i-\frac{\vert\xi\vert^4}{u}\sum_{i=2}^{n}\lambda_i^2+(3\vert\xi\vert^2-u^2)\varphi h_{11}-\frac{\vert\xi\vert^4}{u}h_{11}^2\nonumber\\
		&-\varphi^2\left((n+2)\frac{\vert\xi\vert^2}{u}-7u+8\frac{u^3}{\vert\xi\vert^2}-3\frac{u^5}{\vert\xi\vert^4}\right)\nonumber\\
		=&-\frac{\vert\xi\vert^4}{u}\sum_{i=2}^{n}\left(\lambda_i-\frac{\varphi}{2}(3\frac{u}{\vert\xi\vert^2}-\frac{u^3}{\vert\xi\vert^4})\right)^2\nonumber\\
		&-\varphi^2\left((n+2)\frac{\vert\xi\vert^2}{u}-9u+9\frac{u^3}{\vert\xi\vert^2}-3\frac{u^5}{\vert\xi\vert^4}\right)\nonumber\\
  &+\frac{n-1}{4}\left(9u-6\frac{u^3}{\vert\xi\vert^2}+\frac{u^5}{\vert\xi\vert^4}\right)\varphi^2.
\end{align}
It's easy to check that
\begin{align}\label{cconformal gradient-eq3}
	&-\varphi^2\left((n+2)\frac{\vert\xi\vert^2}{u}-9u+9\frac{u^3}{\vert\xi\vert^2}-3\frac{u^5}{\vert\xi\vert^4}\right)\nonumber\\
 &+\frac{n-1}{4}\left(9u-6\frac{u^3}{\vert\xi\vert^2}+\frac{u^5}{\vert\xi\vert^4}\right)\varphi^2\nonumber\\
	=&-4(n-1)\left(\frac{u^3}{\vert\xi\vert^2}-\frac{u^5}{4\vert\xi\vert^4}\right)\frac{\omega^2}{\vert\xi\vert^4}\varphi^2-12\left(\frac{u^3}{\vert\xi\vert^2}-\frac{u^5}{\vert\xi\vert^4}\right)\frac{\omega^2}{\vert\xi\vert^4}\varphi^2\nonumber\\
	\leq&-8(n+2)\varphi^2\frac{u^5}{\vert\xi\vert^{10}}\omega^3.
\end{align}
Combining (\ref{cconformal gradient-eq2}) with (\ref{cconformal gradient-eq3}) we have
\begin{align*}
	\partial_t\omega\leq&-8(n+2)\varphi^2\frac{u^5}{\vert\xi\vert^{10}}\omega^3.
\end{align*}
By the maximum principle, there is a uniform upper bound for $\omega$. Therefore, with the uniform bound of $\vert\xi\vert^2$, we now have 
\begin{align*}
	u\geq\epsilon_2>0,
\end{align*}
where $\epsilon_2$ is a uniform constant independent of the time $t$. Then, there is a uniform constant $K_3$ independent of $t$ such that 
\begin{align*}
	\partial_t\omega\leq&-K_3\omega^3.
\end{align*}
This implies that 
\begin{align*}
	\max_{\Sigma_t}\omega\leq \frac{C_1}{\sqrt{t+1}},
\end{align*}
where $C_1$ is a uniform constant depending on $K_3$ and $\max_{\Sigma_0}\omega$.
\end{proof}

Notice that around any point on the evolving hypersurface  along the flow, by choosing the normal coordinate $\{e_i\}_{i=1}^n$ we have
\begin{align}\label{cconvergence-eq1}
	\nabla_i\vert\xi\vert^2=2\varphi\langle\xi,e_i\rangle,
\end{align}
\begin{align}\label{cconvergence-eq2}
	\nabla_j\nabla_i\vert\xi\vert^2=2\frac{\xi(\varphi)}{\vert\xi\vert^2}\langle\xi,e_i\rangle\langle\xi,e_j\rangle+2\varphi^2 g_{ij}-2\varphi uh_{ij}.
\end{align}
 Since the evolution equation (\ref{radialfun1}) of the function $\vert\xi\vert^2$ is a quasilinear parabolic equation, it follows from Proposition \ref{C0est} and Proposition \ref{C1-est-cc} that the equation is uniform parabolic. Then the regularity of $\vert\xi\vert^2$ follows from the standard parabolic theory.  This implies  the regularity of the second fundamental form $h_{ij}$ and its higher oder derivatives by (\ref{cconvergence-eq2}). Hence, we obtain the long time existence and convergence of the flow (\ref{flow}). Moreover, the solution converges to a leaf of the foliation $\mathcal{F}(\xi)$ by Proposition \ref{C1-est-cc}
. Now, we have proved the following theorem.
\begin{theorem}\label{conergence-closed}
	Let $(M^{n+1},\bar{g})$ be a Riemannian manifold endowed with a closed, complete conformal vector field $\xi$. We assume that $\varphi>0$, $\varphi^2-\xi(\varphi)\geq 0$ and the direction determined by $\xi$ is of least Ricci curvature on $M$.  Let $\Sigma_0$ be a closed, star-shaped hypersurface embedded in $M'$. Then the evolution equation (\ref{flow}) with $\Sigma_0$ as a initial data has a smooth solution for $t\in[0,+\infty)$ and the solution hypersurfaces converge smoothly to a leaf of the foliation $\mathcal{F}(\xi)$.
\end{theorem}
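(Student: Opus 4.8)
The plan is to run the standard three-step argument for this type of volume-preserving flow—short-time existence, uniform a priori estimates, long-time existence and convergence—with the key twist that in the closed case the scalar quantity $\vert\xi\vert^2$ substitutes for the radial graph function used by Guan--Li--Wang in warped products. Short-time existence is routine: writing $\Sigma(t)$ as a normal graph over $\Sigma_0\subset M'$, the flow (\ref{flow}) is quasilinear parabolic, so there is a unique smooth solution on a maximal time interval $[0,T_{\max})$ which stays embedded and star-shaped for a short time.

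The crux is to upgrade the two estimates already in hand to full regularity. Proposition \ref{C0est} gives the $C^0$ bound $\min_{\Sigma_0}\vert\xi\vert^2\le\vert\xi\vert^2\le\max_{\Sigma_0}\vert\xi\vert^2$, which traps $\Sigma(t)$ in a fixed compact region of $M'$ where $\varphi$, $\xi(\varphi)$ and the curvature of $M$ are uniformly controlled; Proposition \ref{C1-est-cc} gives $\max_{\Sigma_t}\omega\le C_1/\sqrt{t+1}$, hence a uniform positive lower bound $u\ge\epsilon_2>0$. Because $\varphi>0$ and $u$ is bounded below while $\vert\xi\vert^2$ is bounded above, the evolution equation (\ref{radialfun1}) for $\vert\xi\vert^2$ is uniformly parabolic with smooth coefficients, so Krylov--Safonov together with Schauder estimates yield uniform space-time $C^\infty$ bounds for $\vert\xi\vert^2$. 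The two identities
\begin{align*}
	\nabla_i\vert\xi\vert^2=2\varphi\langle\xi,e_i\rangle,\qquad
	\nabla_j\nabla_i\vert\xi\vert^2=2\frac{\xi(\varphi)}{\vert\xi\vert^2}\langle\xi,e_i\rangle\langle\xi,e_j\rangle+2\varphi^2g_{ij}-2\varphi u\,h_{ij}
\end{align*}
then let us solve algebraically for $h_{ij}$ in terms of the Hessian of $\vert\xi\vert^2$, dividing by $\varphi u$, which is bounded below by a positive constant; thus the uniform regularity of $\vert\xi\vert^2$ bootstraps to uniform bounds on $h_{ij}$ and all its covariant derivatives.

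With $\vert A\vert$ bounded on $[0,T_{\max})$ the flow has no finite-time singularity, so $T_{\max}=+\infty$, and the uniform $C^\infty$ bounds make the flow subconverge smoothly. To identify the limit and get genuine convergence I would use Proposition \ref{C1-est-cc} once more: $\omega=\tfrac12\vert\xi\vert^2(\tfrac{\vert\xi\vert^2}{u^2}-1)\to 0$ and $\vert\xi\vert^2$ is bounded below, so $\eta=\tfrac{\vert\xi\vert^2}{u^2}-1\to 0$, i.e. $\vert\xi\vert^2-u^2\to 0$ uniformly; hence on the limit $\Sigma_\infty$ one has $u\equiv\vert\xi\vert$, meaning $\xi$ is everywhere normal to $\Sigma_\infty$, so $\Sigma_\infty$ is an integral hypersurface of the distribution $\mathcal{D}$ and therefore a leaf of $\mathcal{F}(\xi)$. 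Feeding the $O(t^{-1/2})$ decay of $\omega$ back into the evolution equations upgrades subconvergence to smooth convergence along the full flow. I expect the genuine obstacle to be the one already noted in the introduction: lacking an ambient radial coordinate, one cannot reduce (\ref{flow}) to a scalar graphical PDE directly, so the regularity of the geometric unknown $h_{ij}$ must be extracted indirectly through $\vert\xi\vert^2$; the delicate points are checking that the second identity above inverts uniformly—which is exactly where $u\ge\epsilon_2$ enters—and that the coefficients of (\ref{radialfun1}) stay uniformly smooth because of the $C^0$ trapping. Everything else is a routine application of parabolic theory and the maximum principle.
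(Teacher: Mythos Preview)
Your proposal is correct and follows essentially the same route as the paper: the paper also uses Proposition~\ref{C0est} for the $C^0$ trapping, Proposition~\ref{C1-est-cc} for the uniform lower bound on $u$ and the decay of $\omega$, observes that (\ref{radialfun1}) is then uniformly parabolic so standard theory gives $C^\infty$ regularity of $\vert\xi\vert^2$, and then recovers $h_{ij}$ from the very same Hessian identity $\nabla_j\nabla_i\vert\xi\vert^2=2\tfrac{\xi(\varphi)}{\vert\xi\vert^2}\langle\xi,e_i\rangle\langle\xi,e_j\rangle+2\varphi^2g_{ij}-2\varphi u\,h_{ij}$ to conclude long-time existence, with convergence to a leaf coming from $\omega\to 0$. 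Your write-up is in fact slightly more explicit than the paper's (naming Krylov--Safonov/Schauder and distinguishing subconvergence from full convergence), but the architecture is identical.
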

Then, applying Theorem \ref{monotonethm}, we obtain the isoperimetric inequality in Theorem \ref{isoineq3}.

\bibliography{isoperimetric_ineq}

\providecommand{\bysame}{\leavevmode\hbox to3em{\hrulefill}\thinspace}
\providecommand{\MR}{\relax\ifhmode\unskip\space\fi MR }
\providecommand{\MRhref}[2]{%
  \href{http://www.ams.org/mathscinet-getitem?mr=#1}{#2}
}
\providecommand{\href}[2]{#2}
\begin{thebibliography}{10}

\bibitem{bray1997penrose}
Hubert~Lewis Bray, \emph{The penrose inequality in general relativity and
  volume comparison theorems involving scalar curvature}, stanford university,
  1997.

\bibitem{brendle2021isoperimetric}
Simon Brendle, \emph{The isoperimetric inequality for a minimal submanifold in
  {E}uclidean space}, Journal of the American Mathematical Society \textbf{34}
  (2021), no.~2, 595--603.

\bibitem{brendle2014large}
Simon Brendle and Michael Eichmair, \emph{Large outlying stable constant mean
  curvature spheres in initial data sets}, Inventiones mathematicae
  \textbf{197} (2014), 663--682.

\bibitem{brendle-Guan-Li}
Simon Brendle, Pengfei Guan, and Junfang Li, \emph{An inverse curvature type
  hypersurface flow in space forms}, Preprint.

\bibitem{chen2017differential}
Bang-Yen Chen, \emph{Differential geometry of warped product manifolds and
  submanifolds}, World Scientific, 2017.

\bibitem{chen2022fully}
Chuanqiang Chen, Pengfei Guan, Junfang Li, and Julian Scheuer, \emph{A
  fully-nonlinear flow and quermassintegral inequalities in the sphere}, Pure
  and Applied Mathematics Quarterly \textbf{18} (2022), no.~2, 437--461.

\bibitem{chen2022alexandrov}
Min Chen and Jun Sun, \emph{Alexandrov-{F}enchel type inequalities in the
  sphere}, Advances in Mathematics \textbf{397} (2022), 108203.

\bibitem{chodosh2022global}
Otis Chodosh and Michael Eichmair, \emph{Global uniqueness of large stable cmc
  spheres in asymptotically flat {R}iemannian 3-manifolds}, Duke Mathematical
  Journal \textbf{171} (2022), no.~1, 1--31.

\bibitem{chodosh2021isoperimetry}
Otis Chodosh, Michael Eichmair, Yuguang Shi, and Haobin Yu, \emph{Isoperimetry,
  scalar curvature, and mass in asymptotically flat {R}iemannian 3-manifolds},
  Communications on Pure and Applied Mathematics \textbf{74} (2021), no.~4,
  865--905.

\bibitem{eichmair2013large}
Michael Eichmair and Jan Metzger, \emph{Large isoperimetric surfaces in initial
  data sets}, Journal of Differential Geometry \textbf{94} (2013), no.~1,
  159--186.

\bibitem{eichmair2013unique}
\bysame, \emph{Unique isoperimetric foliations of asymptotically flat manifolds
  in all dimensions}, Inventiones mathematicae \textbf{194} (2013), 591--630.

\bibitem{gage1986heat}
Michael Gage and Richard~S Hamilton, \emph{The heat equation shrinking convex
  plane curves}, Journal of Differential Geometry \textbf{23} (1986), no.~1,
  69--96.

\bibitem{ge2014hyperbolic}
Yuxin Ge, Guofang Wang, and Jie Wu, \emph{Hyperbolic {A}lexandrov-{F}enchel
  quermassintegral inequalities ii}, Journal of Differential Geometry
  \textbf{98} (2014), no.~2, 237--260.

\bibitem{goldberg1962conformal}
Samuel~I Goldberg and Shoshichi Kobayashi, \emph{The conformal transformation
  group of a compact {R}iemannian manifold}, American Journal of Mathematics
  \textbf{84} (1962), no.~1, 170--174.

\bibitem{guan-li15}
Pengfei Guan and Junfang Li, \emph{A mean curvature type flow in space forms},
  International Mathematics Research Notices \textbf{2015} (2015), no.~13,
  4716--4740.

\bibitem{guan2021isoperimetric}
\bysame, \emph{Isoperimetric type inequalities and hypersurface flows}, J.
  Math. Study \textbf{54} (2021), no.~1, 56--80.

\bibitem{Guan-Li-Wang2019}
Pengfei Guan, Junfang Li, and Mu-Tao Wang, \emph{A volume preserving flow and
  the isoperimetric problem in warped product spaces}, Transactions of the
  American Mathematical Society \textbf{372} (2019), no.~4, 2777--2798.

\bibitem{hu2022locally}
Yingxiang Hu, Haizhong Li, and Yong Wei, \emph{Locally constrained curvature
  flows and geometric inequalities in hyperbolic space}, Mathematische Annalen
  \textbf{382} (2022), no.~3-4, 1425--1474.

\bibitem{huisken1984flow}
Gerhard Huisken, \emph{Flow by mean curvature of convex surfaces into spheres},
  Journal of Differential Geometry \textbf{20} (1984), no.~1, 237--266.

\bibitem{huisken1987volume}
\bysame, \emph{The volume preserving mean curvature flow.}, Journal f{\"u}r die
  reine und angewandte Mathematik \textbf{382} (1987), 35--48.

\bibitem{huisken1996definition}
Gerhard Huisken and Shing-Tung Yau, \emph{Definition of center of mass for
  isolated physical systems and unique foliations by stable spheres with
  constant mean curvature}, Inventiones mathematicae \textbf{124} (1996),
  no.~1-3, 281--311.

\bibitem{li2014geometric}
Haizhong Li, Yong Wei, and Changwei Xiong, \emph{A geometric inequality on
  hypersurface in hyperbolic space}, Advances in Mathematics \textbf{253}
  (2014), 152--162.

\bibitem{lichnerowicz1964transformations}
Andr{\'e} Lichnerowicz, \emph{Sur les transformations conformes dune variete
  {R}iemannienne compacte}, Comptes Rendus Hebdomadaires des Seances de L
  Academie des Sciences \textbf{259} (1964), no.~4, 697.

\bibitem{montiel98}
Sebasti{\'a}n Montiel, \emph{Stable constant mean curvature hypersurfaces in
  some {R}iemannian manifolds}, Commentarii Mathematici Helvetici \textbf{73}
  (1998), 584--602.

\bibitem{Montiel99}
\bysame, \emph{Unicity of constant mean curvature hypersurfaces in some
  {R}iemannian manifolds}, Indiana University mathematics journal (1999),
  711--748.

\bibitem{neves2009existence}
Andr{\'e} Neves and Gang Tian, \emph{Existence and uniqueness of constant mean
  curvature foliation of asymptotically hyperbolic 3-manifolds}, Geometric and
  Functional Analysis \textbf{19} (2009), 910--942.

\bibitem{obata1962certain}
Morio Obata, \emph{Certain conditions for a {R}iemannian manifold to be
  isometric with a sphere dedicated to {P}rofessor {K}entaro {Y}ano on his
  fiftieth birthday}, Journal of the Mathematical Society of Japan \textbf{14}
  (1962), no.~3, 333--340.

\bibitem{obata70}
\bysame, \emph{Conformal transformations of {R}iemannian manifolds}, Journal of
  Differential Geometry \textbf{4} (1970), no.~3, 311--333.

\bibitem{qing2007uniqueness}
Jie Qing and Gang Tian, \emph{On the uniqueness of the foliation of spheres of
  constant mean curvature in asymptotically flat 3-manifolds}, Journal of the
  American Mathematical Society \textbf{20} (2007), no.~4, 1091--1110.

\bibitem{schulze2008nonlinear}
Felix Schulze, \emph{Nonlinear evolution by mean curvature and isoperimetric
  inequalities}, Journal of Differential Geometry \textbf{79} (2008), no.~2,
  197--241.

\bibitem{wang2014isoperimetric}
Guofang Wang and Chao Xia, \emph{Isoperimetric type problems and
  {A}lexandrov-{F}enchel type inequalities in the hyperbolic space}, Advances
  in Mathematics \textbf{259} (2014), 532--556.

\bibitem{yano1940concircular}
Kentaro Yano, \emph{Concircular geometry {I}. {C}oncircular transformations},
  Proceedings of the Imperial Academy \textbf{16} (1940), no.~6, 195--200.

\bibitem{Yano1970}
\bysame, \emph{Integral formulas in {R}iemannian geometry}, vol.~1, Marcel
  Dekker, 1970.

\bibitem{yu2023isoperimetry}
Haobin Yu, \emph{Isoperimetry for asymptotically flat 3-manifolds with positive
  {ADM} mass}, Mathematische Annalen \textbf{385} (2023), no.~3-4, 1475--1492.

\end{thebibliography}
\end{document}